\definecolor{cobalt}{RGB}{61,89,171}
\newcommand{\Z}{\mathbb{Z}}
\newcommand{\R}{\mathbb{R}}
\newcommand{\C}{\mathbb{C}}
\newcommand\mapsfrom{\mathrel{\reflectbox{\ensuremath{\mapsto}}}}
\renewcommand{\Re}{\mathsf{Re}\,}
\renewcommand{\Im}{\mathsf{Im}\,}
\theoremstyle{plain}
\newtheorem{theorem}{Theorem}[section]
\newtheorem{proposition}[theorem]{Proposition}
\newtheorem{lemma}[theorem]{Lemma}
\newtheorem{corollary}[theorem]{Corollary}
\theoremstyle{definition}
\newtheorem{definition}[theorem]{Definition}
\newtheorem{example}[theorem]{Example}
\theoremstyle{remark}
\newtheorem{remark}[theorem]{Remark}
\def\XXint#1#2#3{{\setbox0=\hbox{$#1{#2#3}{\int}$ }
\vcenter{\hbox{$#2#3$ }}\kern-.6\wd0}}
\def\dashint{\Xint-}
\renewcommand{\dashint}{\fint}
\begin{document}

\title{K\"ahler-Einstein metrics: Old and New}

\author[D. Angella]{Daniele Angella}
\address[D. Angella]{
	Dipartimento di Matematica e Informatica "Ulisse Dini"\\
	Università di Firenze\\
	viale Morgagni 67/a\\
	50134 Firenze\\
	Italy}
\email{daniele.angella@gmail.com}
\email{daniele.angella@unifi.it}
	
\author[C. Spotti]{Cristiano Spotti}
\address[C. Spotti]{
QGM, Centre  for Quantum Geometry of Moduli Spaces\\
Aarhus University\\
Ny Munkegade 118\\
8000 Aarhus C\\
Denmark
}
\email{cristiano.spotti@gmail.com}
\email{c.spotti@qgm.au.dk}

\keywords{K\"ahler-Einstein, Fano, K-stability, Yau-Tian-Donaldson conjecture, moduli of K\"ahler-Einstein}
\thanks{D.A. is supported by the Project SIR2014 AnHyC "Analytic aspects in complex and hypercomplex geometry" (code RBSI14DYEB), by Projecti PRIN2015 "Varietà reali e complesse: geometria, topologia e analisi armonica", and by GNSAGA of INdAM.\\
C.S. is supported by AUFF Starting Grant 24285}
\subjclass[2010]{53C55}

\date{\today}

\begin{abstract}
We present classical and recent results on K\"ahler-Einstein metrics on compact complex manifolds, focusing on existence, obstructions and relations to algebraic geometric notions of stability (K-stability). These are the notes for the SMI course "K\"ahler-Einstein metrics" given by C.S. in Cortona (Italy), May 2017. The material is not intended to be original.
\end{abstract}

\maketitle

\section*{Introduction}

Let us begin by presenting our main actors. The most "geometric" way to introduce them is as follow. Connected $2n$-dimensional Riemannian manifolds $(M^{2n},g)$ are called \emph{K\"ahler-Einstein} (KE)  if, as the name suggests, they are: 
\begin{description}
\item[K\"ahler] the  holonomy  $\mathrm{Hol}(M^{2n},g)$ is contained  in the unitary group $ \mathrm{U}(n);$
\item[Einstein] the Ricci curvature satisfies $\mathrm{Ric}(g)=\lambda g$ for $\lambda\in\mathbb{R}$.
\end{description}
 In these lectures we are going to focus on  \emph{compact} KE  manifolds. There are several possible motivations for the study such spaces:
\begin{enumerate}
\item For $n=1$, KE metrics are precisely metrics with constant Gauss curvature on compact oriented surfaces. By the classical {\em Uniformization Theorem}, a jewel of XIX century mathematics due to works of  Riemann, Poincar\'e and Koebe, three cases occur: 
  \begin{itemize}
  \item if $\lambda > 0$, then the manifold is the round  sphere $(\mathbb{S}^2, g_{\text{round}})$;
  \item if $\lambda = 0$, then the manifold is a flat torus $(\mathbb{T}^2, g_{\text{flat}})$; 
  \item if $\lambda < 0$, then the manifold is a compact hyperbolic surface $C_g \simeq \mathbb{H}^2 \slash \Gamma$ where $\Gamma \subseteq \mathrm{PSL}(2;\mathbb R)$ is a discrete subgroup that acts freely on $\mathbb H^2$.
  \end{itemize}
  It is important to remark that, despite the \emph{local} isometry class of such metrics depends only on the value of the constant Gauss curvature, there are, up to scalings, a real $2$-dimensional continuous family  of distinct ({\itshape i.e.}, non \emph{globally} isometric) flat tori, and a $6g-6$ family  (the famous Riemann's \emph{moduli} parameters) of distinct hyperbolic metrics on a surface of genus $g\geq2$. KE manifolds are the most natural higher dimensional generalizations of such important geometric spaces.
  
\item In a curvature hierarchy, Einstein metrics lie between the better understood cases of full constant curvature and  constant scalar curvature (such metrics exist in any conformal class, by the solution of the Yamabe problem). In real dimension bigger than three, Einstein metrics no longer  have   a unique local isometry model but they still  form \emph{finite} dimensional moduli spaces. They are also good candidates for being "canonical" metrics  on a space and they are critical points of the natural {\em Einstein-Hilbert functional} $g\mapsto \int \mathrm{Sc}_g dV_g$, over $\int dV_g=1$.  Thus KE metrics provide an important example  of such quite mysterious class of  Einstein spaces to study.

\item The K\"ahler condition corresponds to the  holonomy group $\mathrm{U}(n)$ in the Berger's classification of reduced holonomy group of irreducible non-locally symmetric spaces. The other are the "generic" $\mathrm{SO}(2n)$, the Calabi-Yau $\mathrm{SU}(n)$, the hyper-K\"ahler $\mathrm{Sp}(n)$, the quaternionic-K\"ahler $\mathrm{Sp}(1)\mathrm{Sp}(n)$ and the special groups $\mathrm{G}_2$ and $\mathrm{Spin}(7)$. Except than $\mathrm{SO}(2n)$ and $\mathrm{U}(n)$, the other cases are Einstein. Thus it is natural to look what happens when we combine the relatively soft  $\mathrm{U}(n)$ holonomy condition with the Einstein property: as we will see, such holonomy condition actually provides a great simplification in studying the Einstein equation. 
\end{enumerate}

The K\"ahler condition implies that there is a natural structure of complex  manifold $X^n (\cong M^{2n})$  which is compatible with the metric. The K\"ahler-Einstein property implies even more:
\begin{itemize}
\item if $\lambda>0$, then $c_1(X)>0$; that is $X^n$ is {\em Fano};
\item if  $\lambda=0$, then $c_1(X)=0$; that is $X^n$ is {\em Calabi-Yau};
\item if  $\lambda<0$, then $c_1(X)<0$; that is  $X^n$ is {\em with ample canonical bundle}, in particular, it is {\em of general type}.
\end{itemize}

These mutually exclusive conditions are very strong and most of the complex manifolds are not of the above types (anyway, such special manifolds are, in a certain sense, supposed to be fundamental  "building blocks" for complex (K\"ahler) manifolds).

In analogy with the uniformization of surfaces/complex curves  mentioned above (note that in such case $c_1=2-2g$), it is very natural to ask if all compact complex manifolds whose first Chern class "has a sign" can be equipped with KE metrics. This and a related question was posed by Calabi in the $'50$s (the famous \emph{Calabi's conjectures}, \cite{calabi-ICM, calabi-57}). However, it was immediately realized that some issues occur: Matsushima \cite{matsushima} found an obstruction for the case $\lambda>0$ in terms of reductivity of the Lie algebra of holomorphic vector fields. In particular, very simple Fano manifolds, such as the blow-up in a point of the projective plane, cannot admit KE metric. 

 However, for non-positive first Chern class, the situation turned out to much nicer. Actually, the  best it could be: 

\begin{theorem}[{Aubin \cite{aubin-CRAS, aubin-BSM}, Yau \cite{yau-PNAS, yau-CPAM}}]
Let $X^n$ be a compact complex manifold.
If $c_1(X)<0$, then there exists up to scaling a unique K\"ahler-Einstein metric on $X^n$.
\end{theorem}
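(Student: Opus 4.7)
The plan is to recast $\mathrm{Ric}(\omega)=-\omega$ as a scalar complex Monge-Ampère equation on a Kähler potential, then solve it by the continuity method; the negative sign makes both openness and the $C^0$ estimate essentially automatic, so the real difficulty is funneled into a single second-order a priori estimate.

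First I fix a reference Kähler form $\omega_0$ in the class $-2\pi c_1(X)$ (available because $-c_1(X)$ is a Kähler class). Since $\mathrm{Ric}(\omega_0)+\omega_0$ is a closed real $(1,1)$-form representing $0\in H^2(X,\mathbb{R})$, the $\partial\bar\partial$-lemma produces $f_0\in C^\infty(X,\mathbb{R})$ with $\mathrm{Ric}(\omega_0)+\omega_0 = i\partial\bar\partial f_0$. Writing a candidate KE metric as $\omega_\varphi = \omega_0 + i\partial\bar\partial \varphi$ and using the standard identity $\mathrm{Ric}(\omega_\varphi) = \mathrm{Ric}(\omega_0) - i\partial\bar\partial \log(\omega_\varphi^n/\omega_0^n)$, the equation $\mathrm{Ric}(\omega_\varphi)=-\omega_\varphi$ reduces (after absorbing an additive constant into $f_0$) to
$$(\omega_0 + i\partial\bar\partial \varphi)^n = e^{f_0 + \varphi}\,\omega_0^n. \qquad (\ast)$$
I embed $(\ast)$ into the family $(\ast_t):\ (\omega_0 + i\partial\bar\partial \varphi_t)^n = e^{t f_0 + \varphi_t}\,\omega_0^n$, $t\in[0,1]$, so that $\varphi_0\equiv 0$ solves $(\ast_0)$ and $(\ast_1)=(\ast)$. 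Let $T\subset[0,1]$ be the set of $t$ for which $(\ast_t)$ admits a smooth solution; the aim is to show $T$ is nonempty, open and closed, whence $T=[0,1]$.

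Openness is the easy part: linearizing $F(\varphi,t)=\log(\omega_\varphi^n/\omega_0^n)-tf_0-\varphi$ in $\varphi$ at a solution $\varphi_t$ yields $\Delta_{\omega_{\varphi_t}}-\mathrm{Id}$, which is invertible on Hölder spaces because the (non-positive) Laplacian avoids the eigenvalue $1$; the implicit function theorem then supplies a nearby solution. Closedness reduces to uniform-in-$t$ $C^{k,\alpha}$ estimates for $\varphi_t$. The $C^0$ bound is essentially free from the maximum principle: at a point where $\varphi_t$ peaks, $i\partial\bar\partial\varphi_t\le 0$ forces $e^{tf_0+\varphi_t}\le 1$, giving $\sup \varphi_t \le -t\inf f_0$, and symmetrically at the minimum $\inf \varphi_t \ge -t\sup f_0$; the decisive ingredient here is the $+\varphi$ in the exponent, a feature unique to the negative sign convention (and absent in the Fano case). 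The crux, and what I expect to be the main obstacle, is the Aubin-Yau Laplacian estimate: applying $\Delta_{\omega_\varphi}$ to the logarithm of $(\ast_t)$ and running a maximum-principle argument on $\log\mathrm{tr}_{\omega_0}\omega_{\varphi_t} - A\varphi_t$ (with $A$ large enough to dominate the holomorphic bisectional curvature of $\omega_0$) controls $n+\Delta_{\omega_0}\varphi_t$ in terms of $\|\varphi_t\|_{C^0}$ and the fixed geometry of $(X,\omega_0)$. Once $\omega_{\varphi_t}$ is uniformly equivalent to $\omega_0$, equation $(\ast_t)$ becomes uniformly elliptic of Monge-Ampère type, and the standard Evans-Krylov / Calabi $C^3$ / Schauder bootstrap produces $C^{k,\alpha}$ bounds in every order; Arzelà-Ascoli extracts a smooth solution at any limit point of $T$.

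For uniqueness, suppose $\varphi_1,\varphi_2$ both solve $(\ast)$. At a maximum $p$ of $\varphi_1-\varphi_2$, $i\partial\bar\partial(\varphi_1-\varphi_2)(p)\le 0$, and dividing the two instances of $(\ast)$ at $p$ yields $e^{\varphi_1-\varphi_2}(p)\le 1$, so $\varphi_1\le \varphi_2$ everywhere; swapping roles forces $\varphi_1=\varphi_2$. Hence the KE metric is unique in the cohomology class $-2\pi c_1(X)$, and the \emph{up to scaling} in the statement corresponds precisely to the remaining freedom of rescaling $\lambda<0$.
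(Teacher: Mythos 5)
Your proposal is correct and follows essentially the same route as the paper: the reduction via the $\sqrt{-1}\partial\overline\partial$-lemma to the Monge--Amp\`ere equation $\omega_\varphi^n=e^{f+\varphi}\omega^n$, the continuity path with $e^{tf+\varphi_t}$, openness from the invertibility of $\Delta_{\omega_{\varphi_t}}-\mathrm{Id}$, the maximum-principle $\mathcal{C}^0$ bound and uniqueness (both exploiting the $+\varphi$ in the exponent), the Aubin--Yau Laplacian estimate via the maximum principle applied to $\log\mathrm{tr}_g g_\varphi-A\varphi$, and the Evans--Krylov/$\mathcal{C}^3$/Schauder bootstrap are exactly the steps carried out in the text. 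No substantive differences to report.
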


\begin{theorem}[{Yau \cite{yau-PNAS, yau-CPAM}}]
Let $X^n$ be a compact complex K\"ahler manifold.
If $c_1(X)=0$, then there exists a unique K\"ahler-Einstein metric in any K\"ahler class on $X^n$.
\end{theorem}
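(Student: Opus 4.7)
The plan is to reduce the existence of a Ricci-flat Kähler metric in a given class to a complex Monge-Ampère equation, and then to solve that equation by the continuity method. Fix a representative $\omega_0$ of the given Kähler class. Since $c_1(X)=0$, the cohomology class of $\mathrm{Ric}(\omega_0)$ vanishes, so by the $\partial\bar\partial$-lemma on compact Kähler manifolds there is a smooth function $f$, unique up to an additive constant, with $\mathrm{Ric}(\omega_0)=i\partial\bar\partial f$; normalize $f$ so that $\int_X e^f\,\omega_0^n=\int_X \omega_0^n$. Any Kähler metric in the class has the form $\omega_\varphi=\omega_0+i\partial\bar\partial\varphi$, and a direct computation using $\mathrm{Ric}(\omega_\varphi)=\mathrm{Ric}(\omega_0)-i\partial\bar\partial\log(\omega_\varphi^n/\omega_0^n)$ shows that $\mathrm{Ric}(\omega_\varphi)=0$ is equivalent to the Monge-Ampère equation
\[
(\omega_0+i\partial\bar\partial\varphi)^n = e^f\,\omega_0^n, \qquad \omega_\varphi>0.
\]

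To solve this I would deform along the one-parameter family
\[
(\omega_0+i\partial\bar\partial\varphi_t)^n = e^{tf+c_t}\,\omega_0^n, \qquad t\in[0,1],
\]
where $c_t$ is the constant chosen so that total volumes match, and consider the subset $S\subset[0,1]$ of parameters for which a smooth solution exists. The starting point $\varphi_0\equiv 0$ lies in $S$. Openness of $S$ follows from the implicit function theorem on Hölder spaces $C^{k,\alpha}$: the linearization of the Monge-Ampère operator at $\varphi_t$ is, up to a positive factor, the Laplacian $\Delta_{\omega_{\varphi_t}}$, which is an isomorphism between the zero-mean subspaces of $C^{k+2,\alpha}$ and $C^{k,\alpha}$.

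The main obstacle, and the heart of Yau's argument, will be closedness, which requires uniform a priori estimates on $\varphi_t$ in $C^{k,\alpha}$ independent of $t$. I would prove these in the classical order. First, a $C^0$ estimate $\|\varphi_t\|_{L^\infty}\leq C$ via Moser iteration, exploiting the Sobolev inequality on $(X,\omega_0)$ and the fact that $e^{tf+c_t}$ is uniformly bounded in $L^\infty$. Next, a $C^2$ estimate, equivalent to a two-sided bound on $\mathrm{tr}_{\omega_0}\omega_{\varphi_t}$, via the maximum principle applied to a judicious auxiliary quantity (Aubin-Yau's inequality), which brings in the bisectional curvature of the fixed reference metric $\omega_0$. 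From there, Evans-Krylov theory (or Calabi's third-order identity) promotes $C^2$ to $C^{2,\alpha}$, and Schauder bootstrap on the differentiated equation gives $C^\infty$ bounds. The $C^0$ and $C^2$ steps are the genuinely hard ones and are the reason the theorem is such a landmark.

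Uniqueness within the class is comparatively soft. If $\varphi_1,\varphi_2$ both solve the equation with the same data, set $\psi=\varphi_1-\varphi_2$ and subtract to get
\[
0 = \omega_{\varphi_1}^n-\omega_{\varphi_2}^n = i\partial\bar\partial\psi\wedge \sum_{k=0}^{n-1}\omega_{\varphi_1}^k\wedge\omega_{\varphi_2}^{n-1-k}.
\]
Pairing this identity with $\psi$ and integrating by parts produces $\int_X i\partial\psi\wedge\bar\partial\psi\wedge T=0$ for a strictly positive $(n-1,n-1)$-form $T$; hence $\psi$ is constant and $\omega_{\varphi_1}=\omega_{\varphi_2}$, which yields uniqueness of the Kähler-Einstein representative in the class.
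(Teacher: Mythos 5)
Your proposal follows essentially the same route as the paper: reduction to the complex Monge--Amp\`ere equation $\omega_\varphi^n=e^f\omega_0^n$ via the $\sqrt{-1}\partial\overline\partial$-lemma, the continuity method with openness from the implicit function theorem, closedness from the $\mathcal{C}^0$, Laplacian (maximum principle with the bisectional curvature bound), and Evans--Krylov/Schauder estimates, and uniqueness by multiplying the difference of the equations by $\varphi_1-\varphi_2$ and integrating by parts against a positive $(n-1,n-1)$-form --- exactly Calabi's argument reproduced in the paper. The only compressed point is the $\mathcal{C}^0$ step: Moser iteration by itself only bounds $\|\varphi\|_{L^\infty}$ in terms of $\|\varphi\|_{L^2}$, so (as the paper does) one must still control $\|\varphi\|_{L^2}$ by $\|\varphi\|_{L^1}$ via Poincar\'e and then bound $\|\varphi\|_{L^1}$ by a Green-function argument under a suitable normalization of $\varphi$.
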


In the Fano case, the situation remained unclear. Starting from the $'80$s there have been several discoveries of new obstructions: Futaki's invariant  \cite{futaki}, Tian's examples of a smooth Fano $3$-fold with no non-trivial holomorphic vector fields with not KE metric  \cite{tian-Inv} based on obstructions coming from algebro-geometric notions of stability conditions (see also, {\itshape e.g.} \cite{donaldson-JDG05}). Furthermore, some existence criteria and non-trivial examples were  found in the last $30$ years:  $\alpha$-invariant existence criterion \cite{tian-Inv87}, study of highly symmetric cases ({\itshape e.g.} toric varieties  \cite{wang-zhou}), and a  complete understanding in dimension two  \cite{tian-Invent90}.

Motivated by the above important results and in analogy with Hitchin-Kobayashi correspondence for holomorphic vector bundles \cite{donaldson-Duke87, uhlenbeck-yau, li-yau, lubke-teleman}, it was conjectured, e.g.  \cite{tian-Inv, donaldson-JDG}), that the relations of the existence problem with some (to be understood) algebro-geometric notion of stability is indeed the crucial aspect to completely characterize, in purely algebro-geometric terms, the existence of KE metrics on Fano manifolds (and, even more generally, for constant scalar curvature K\"ahler metrics). Such conjecture is known as \emph{Yau-Tian-Donaldson (YTD) conjecture}. By refining the Futaki's invariant, the notion of \emph{K-stability} has been introduced by Tian \cite{tian-CAG, tian-Inv}, and refined by Donaldson \cite{donaldson-JDG}. This notion turned out to be the correct one:

\begin{theorem} [{Yau-Tian-Donaldson conjecture for the Fano case; "if" Chen-Donaldson-Sun \cite{chen-donaldson-sun-IMRN, chen-donaldson-sun}}, "only if" Berman \cite{berman-Inv}]\label{CDS}
A Fano manifold $X^n$ admits a K\"ahler-Einstein metric if and only if $X$ is K-polystable.
\end{theorem}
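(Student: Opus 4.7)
The plan is to split the biconditional into its two directions and handle them with quite different machinery. For the \emph{only if} direction (Berman), I would work with the Ding functional $\mathcal{D}$ on the space of K\"ahler potentials in $c_1(X)$, whose critical points are exactly the KE potentials. The key fact to establish is that, given a test configuration $(\mathcal{X},\mathcal{L})$ for $(X,-K_X)$, one can associate a (weak) geodesic ray $\varphi_t$ in the space of potentials whose asymptotic slope $\lim_{t\to\infty} \mathcal{D}(\varphi_t)/t$ computes, up to a positive multiple, the Donaldson-Futaki invariant $\mathrm{DF}(\mathcal{X},\mathcal{L})$. Once a KE metric exists, convexity of $\mathcal{D}$ along weak geodesics (Berndtsson) plus its properness modulo the action of $\mathrm{Aut}^0(X)$ forces the slope to be non-negative, with equality exactly when the ray comes from a one-parameter subgroup of automorphisms. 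This yields K-polystability.

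The \emph{if} direction (Chen-Donaldson-Sun) is the genuinely hard part, and I would run Donaldson's continuity method through K\"ahler-Einstein metrics with conical singularities. Fix a smooth divisor $D\in|-K_X|$ (whose existence is itself a delicate point requiring e.g. Koll\'ar's result) and seek, for $\beta\in(0,1]$, solutions $\omega_\beta$ of $\mathrm{Ric}(\omega_\beta)=\beta\,\omega_\beta+(1-\beta)[D]$. The set $I\subset(0,1]$ of $\beta$'s where such a solution exists is then shown to be nonempty (small $\beta$, via Berman/Jeffres-Mazzeo-Rubinstein or direct pluripotential methods), open (linearization plus a Bando-Mabuchi type uniqueness), and closed; smooth KE corresponds to $1\in I$. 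Closedness is where K-polystability enters: if $\beta_i\nearrow\beta^*$ fails to converge, one passes to a Gromov-Hausdorff limit $Z$ of the $(X,\omega_{\beta_i})$. Using the Cheeger-Colding-Tian structure theory, the partial $C^0$ estimate (Donaldson-Sun, extended to the conical setting), and the algebraicity theorem of Donaldson-Sun, $Z$ is identified with a $\mathbb{Q}$-Fano variety $W$ embedded by a fixed pluri-anticanonical system, and one constructs a test configuration degenerating $X$ to $W$ whose central fiber is not a product. A delicate computation then shows that the Donaldson-Futaki invariant of this test configuration is non-positive, in fact negative unless $W\cong X$, contradicting K-polystability and forcing convergence.

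The principal obstacle, in my judgment, is this last closedness step, and within it the construction of an honest destabilizing test configuration out of the analytic Gromov-Hausdorff limit. Two technically heavy inputs are required: first, the conical version of the partial $C^0$ estimate, which demands uniform lower bounds on the Bergman kernels along the family $\omega_{\beta_i}$ despite the singular reference geometry, and rests on Cheeger-Colding regularity theory in the Ricci-bounded-below setting; second, the algebraization of $Z$, which requires showing that tangent cones and the limit itself carry a natural projective structure compatible with the limiting embedding, and that the resulting $\mathbb{Q}$-Fano degeneration is realized by a flat family over a disk with $X$ as general fiber. Once these are in hand, the sign of $\mathrm{DF}$ along this destabilization is controlled via the same slope formula used in the only if direction, combined with lower semicontinuity of the (log) Mabuchi functional under Gromov-Hausdorff convergence, closing the loop.
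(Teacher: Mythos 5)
Your overall route is the same as the paper's: Berman's Ding-functional/weak-geodesic argument for the ``only if'' direction, and the Chen--Donaldson--Sun conical continuity path with Gromov--Hausdorff limits, Cheeger--Colding(--Tian) theory, the partial $C^0$ estimate and Donaldson--Sun algebraization for the ``if'' direction. Two points, however, deserve correction, and the second is a genuine gap. First, a minor one: in Berman's direction the asymptotic slope of the Ding functional along the ray attached to a test configuration does \emph{not} compute $DF$ up to a positive multiple; Berman's formula reads $DF(X,\mathcal{X})=\lim_{t\to\infty}\frac{d}{dt}\mathcal{D}_\omega(\varphi_t)+\epsilon$ with an explicit correction $\epsilon\geq 0$, so the slope only bounds $DF$ from below -- which is all you need. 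Also, properness modulo $\mathrm{Aut}^0(X)$ is neither available a priori nor used: convexity of $\mathcal{D}$ along the ray plus the fact that the KE potential is a critical point already forces the slope at infinity to be non-negative, and the equality case is analyzed separately to get polystability rather than mere semistability.

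The gap is in the closedness step. Knowing that the Gromov--Hausdorff limit $(W,D_\infty)$ lies in the closure of the $\mathrm{PGL}$-orbit of $(X,D)$ inside a fixed pluri-anticanonical projective space does not by itself yield a test configuration: one needs a one-parameter subgroup realizing the degeneration. In the paper this is obtained by first proving a Matsushima-type reductivity theorem for $\mathrm{Aut}(W,D_\infty)$ (using the weak conical KE structure on the limit) and then invoking Luna's slice theorem to produce a $\mathbb{C}^*\hookrightarrow\mathrm{Aut}(W,D_\infty)$ with $(W,D_\infty)=\lim_{t\to 0}t\cdot(X,D)$; your sketch skips this entirely. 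Moreover, the sign mechanism you propose -- the slope formula together with ``lower semicontinuity of the (log) Mabuchi functional under GH convergence'' -- is not how the contradiction is reached. The paper's argument is algebraic: since the central fiber $(W,D_\infty)$ carries a weak conical KE metric at angle $\underline{\beta}$, the log-Futaki (log-DF) invariant of this test configuration vanishes at the parameter $\underline{\beta}$, and since the log-DF invariant depends affinely on $\beta$, its value at $\beta=1$ -- the ordinary Donaldson--Futaki invariant of $X$ -- is forced to be negative unless $W\cong X$, contradicting K-polystability. Without the reductivity/Luna input you have no test configuration on which to evaluate $DF$, and without the linearity-in-$\beta$ argument the claimed non-positivity at $\beta=1$ is unsubstantiated.
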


The "only if" direction has been studied by Tian \cite{tian-Inv}, Donaldson \cite{donaldson-JDG05}, Mabuchi \cite{mabuchi-arXiv}, Stoppa \cite{stoppa-AdvMath}, and in full generality by Berman \cite{berman-Inv}. The "if" direction is the celebrated breakthrough result obtained in 2012 by Chen-Donaldson-Sun \cite{chen-donaldson-sun-IMRN, chen-donaldson-sun}.

In these six lectures we are going to describe some aspects of the above outlined  "KE story". Due to the vastness of the subject, there is going to be plenty of omission of fundamental results. We apologize in advance. The first lectures are supposed to be more elementary and more detailed. In approaching the final lecture the "hand waving" is going to become more and more dominant. In any case, we hope that we at least succeed in describing some of the ideas  behind the proofs of  important results in the field, and that such "road map" can be of some use for the ones that would like to proceed further in the study by reading  the original papers.

This is a more detailed description of the actual content of the lectures.

\begin{itemize}
	\item Lecture $1$. Review of basic K\"ahler geometry with emphasis on  curvature formulas.
	\item Lecture $2$. Calabi's conjectures: preliminaries, their proofs and some consequences.
	\item  Lecture $3$. Obstructions to the existence of KE metrics on Fanos: Matsushima's theorem and Futaki's invariant. Mabuchi's energy and geodesics: the "fake" proof of uniqueness.
	\item Lecture $4$. First results on existence of KE metric on Fanos: energy functionals, properness and the  $\alpha$-invariant criterion.
	\item Lecture $5$. Towards K-stability: review of moment maps and GIT, Donaldson-Fujiki's infinite dimensional picture, definition of K-stability.
	\item Lecture $6$. YTD conjecture for Fanos: descriptions of  Berman and Chen-Donaldson-Sun proofs. Compact Moduli spaces of KE Fanos.
\end{itemize}

\bigskip

{\small
\noindent{\sl Acknowledgments.}
This note resumes the contents of the SMI school "Kaehler-Einstein metrics" given by C.S. in Cortona on April 30th--May 06th, 2017, see \url{https://gecogedi.dimai.unifi.it/event/404/}. The event has been organized by the two authors together with Simone Calamai, Graziano Gentili and Nicoletta Tardini, with the support by SMI Scuola Ma\-te\-ma\-ti\-ca Interuniversitaria, Dipartimento di Ma\-te\-ma\-ti\-ca e Informatica "Ulisse Dini" of Università di Firenze, SIR2014 AnHyC "Analytic aspects in complex and hypercomplex geometry" (code RBSI14DYEB), GNSAGA of INdAM.
The authors warmly thank all the participants for the wonderful environment they contributed to, and the secretaries for their fundamental assistance during the organization. In particular, thanks to Gao Chen, Paul Gauduchon, Long Li, Alexandra Otiman, Chengjian Yao and Kai Zheng for accepting the invitation to give seminar lectures.
Many thanks also to the anonymous Reviewer, whose suggestions improved the presentation of the paper.
}

\section{Curvature formulas on K\"ahler manifolds}
In this preliminary section, we recall some basic facts concerning differential complex geometry and curvature of K\"ahler metrics; references are {\itshape e.g.} \cite{gabor-book, tian-book, moroianu, demailly-agbook, voisin-1, gauduchon-book, huybrechts, kodaira-morrow, griffiths-harris, kodaira-book, wells-book, ballmann, kobayashi-nomizu-2}.

\subsection{Basic complex geometry}

Let $X^n$ be a {\em complex manifold}, {\itshape i.e.} a smooth manifold with an atlas whose transition functions are holomorphic. Examples include $\mathbb{CP}^n$, hypersurfaces in $\mathbb{CP}^n$ namely zero-set of a homogeneous polynomial, tori $\mathbb{C}^n\slash \Gamma$ where $\Gamma$ is a discrete subgroup of maximal rank. We denote by $M^{2n}$ the underlying smooth manifold, and we use local holomorphic coordinates $(z_j=x_j+\sqrt{-1}y_j)_j$, so $(x_j,y_j)_j$ are local differential coordinates and $(\partial_{x_j}, \partial_{y_j})_j$ denotes the corresponding local frame for the tangent bundle.

Multiplication by $\sqrt{-1}$ defines a tensor $J \in \mathrm{End}(TM^{2n})$ as $J \partial_{x_j}=\partial_{y_j}$ and $J\partial_{y_j}=-\partial_{x_j}$, which is globally defined and with the property that $J^2=-\mathrm{id}$. Moreover, if we define the {\em Nijenhuis tensor} of such an endomorphism $J$ as
$$ N_J(Y,T) := \frac{1}{4} \left( [JY,JT] - J[JY,T] - J[Y,JT] - [Y,T] \right), $$
for $Y,T \in TM^{2n}$,
then clearly we have $N_J=0$.
Conversely, by the Newlander and Nirenberg theorem \cite{newlander-nirenberg}, if $M^{2n}$ is a smooth manifold and $J\in\mathrm{End}(M^{2n})$ is such that $J^2=-\mathrm{id}$ (say that $J$ is an {\em almost-complex} structure), then $J$ is {\em integrable} ({\itshape i.e.} it is induced by a holomorphic atlas) if and only if $N_J=0$.

Consider the eigenspaces splitting
$$ TM\otimes {\mathbb C} = TM^{(1,0)} \oplus TM^{(0,1)} = \langle \partial_{z_1}, \ldots, \partial_{z_n} \rangle \oplus \langle \partial_{\bar z_1}, \ldots, \partial_{\bar z_n} \rangle , $$
where
$$
\partial_{z_j} = \frac{1}{2} \left( \partial_{x_j} - \sqrt{-1} \partial_{y_j} \right) ,
\qquad
\partial_{\bar z_j} = \frac{1}{2} \left( \partial_{x_j} + \sqrt{-1} \partial_{y_j} \right) .
$$
Then we can identify
$$ (TM,J) \stackrel{\simeq}{\to} (TM^{(1,0)}, \sqrt{-1}) $$
by means of the maps
$$ Y \mapsto Y^{(1,0)}:=\frac{1}{2}(Y-\sqrt{-1}JY), \qquad 2 \Re v = v+\bar v \mapsfrom v . $$
A {\em holomorphic vector field} is a holomorphic section of $TM^{(1,0)}$; locally, it is given by $v\stackrel{\text{loc}}{=}\sum v_j \partial_{z_j}$ where $v_j$ are holomorphic function, {\itshape i.e.} smooth  functions such that $\partial_{\bar z_k}v_j=0$ for any $k$. We see easily that a smooth vector field  $v \in TM^{(1,0)}$ is holomorphic if and only if $\mathcal{L}_{\Re v}J=0$ ({\itshape i.e.} $\Re v$ is \emph{real holomorphic}); see {\itshape e.g.} \cite[page 23]{gauduchon-book}. 

Similarly, one can consider the splitting at the level of differential forms:
$$ TM^\vee \otimes \mathbb C = \wedge^{1,0}M \oplus \wedge^{0,1}M = \langle dz_1 , \ldots, dz_n \rangle \oplus \langle d\bar z_1, \ldots, d\bar z_n \rangle $$
where
$$ d z_j = dx_j+\sqrt{-1}dy_j, \qquad d\bar z_j = dx_j - \sqrt{-1} dy_j . $$
Then
$$ \wedge^j TM^\vee \otimes \mathbb C = \bigoplus_{p+q=j} \wedge^{p,q}M , $$
that is, locally, a {\em $(p,q)$-form} can be expressed as $\eta \stackrel{\text{loc}}{=} \sum f_{I\bar J} dz_{i_1}\wedge\cdots\wedge dz_{i_p}\wedge d\bar z_{j_1}\wedge\cdots\wedge d\bar z_{j_q}$, where $f_{I\bar J}=f_{i_1, \ldots, i_p, \bar j_1, \ldots, \bar j_q}$ are smooth functions. We have $df=\partial f+\overline\partial f$ on functions, and this extends to forms. By $d^2=0$, we get $\partial^2=\overline\partial^2=\partial\overline\partial+\overline\partial\partial=0$. 

 The {\em Dolbeault cohomology} is defined as
$$ H^{p,q}_{\overline\partial}(X) = \frac{\ker(\overline\partial\colon \wedge^{p,q}X \to \wedge^{p,q+1}X)}{\mathrm{im}(\overline\partial\colon \wedge^{p,q-1}X \to \wedge^{p,q}X)} . $$
When $X$ is compact, $h^{p,q}:=\dim_{\mathbb{C}} H^{p,q}(X)<+\infty$ are called the {\em Hodge numbers}. Note that $h^{p,0}$ is just the dimension of the space of holomorphic $p$-forms ({\itshape e.g.}, $h^{1,0}=g$ for compact  complex curves of genus $g$). More generally, for people who know a bit of sheaf theory, if we denote by $\mathcal{A}^{p,q}_X$ the sheaf of germs of smooth $(p,q)$-forms, and define the sheaf of germs of {\em holomorphic $p$-forms} as $\Omega^p_X:=\ker(\overline\partial \colon \mathcal{A}^{p,0}_X \to \mathcal{A}^{p,1}_X)$, then it is a standard result that $H^{p,q}(X)\cong H^q(X, \Omega^p_X)$, where the last denotes sheaf cohomology \cite{dolbeault}, see {\itshape e.g.} \cite{voisin-1}.

\subsection{K\"ahler manifolds}

A {\em Hermitian metric} $h$ on $X^n$ is a smoothly varying family of Hermitian products on $TX=TM^{(1,0)}$. Namely, for any $p\in X$, it holds $h_p(v,v)>0$ for any $v\neq0$ and $h_p(v,w)=\overline{h_p(w,v)}$; locally, $h\stackrel{\text{loc}}{=}\sqrt{-1}\sum h_{j\bar k} dz^j\otimes d\bar z^k$ where $(h_{j\bar k})$ is a positive-definite Hermitian matrix of functions.

A Hermitian manifold is called {\em K\"ahler} if the associated $(1,1)$-form $\omega\stackrel{\text{loc}}{:=} \sqrt{-1} \sum  h_{j\bar k} dz_j\wedge d\bar z_k$ is closed:
$$ d\omega=0 , $$
that is, $\omega$ is symplectic. Equivalently if the following PDE is satisfied:
$$
\frac{\partial h_{j\bar k}}{\partial z_\ell} = \frac{\partial h_{\ell \bar k}}{\partial z_j} , \qquad
\frac{\partial h_{j\bar k}}{\partial \bar z_\ell} = \frac{\partial h_{j \bar \ell}}{\partial \bar z_k} .
$$
Examples include:
\begin{itemize}
\item $(\mathbb C^n, \omega_{\text{flat}})$ with $\omega_{\text{flat}}=\sum \frac{\sqrt{-1}}{2} dz_j \wedge d\bar z_j$ for $(z_j)$ standard coordinates; and $\mathbb C^n \slash \Gamma$ flat tori.
\item $(\mathbb{CP}^n, \omega_{FS})$ where the Fubini-Study metric is
$$ \omega_{FS}\stackrel{\text{loc}}{=} \sqrt{-1}\partial\overline\partial \log\left( 1+\sum \frac{|Z_j|^2}{|Z_0|^2} \right) $$
away from $\{Z_0=0\}$, where $(Z_j)$ are homogeneous coordinates on $\mathbb{CP}^n$.
For example, for $\mathbb{CP}^1$, we get the round metric $\omega = \sqrt{-1} \partial\overline\partial \log (1+|\xi|^2) = \frac{\sqrt{-1}}{(1+|\xi|^2)^2} d\xi\wedge d\bar\xi >0$. 
We remark here that, more generally, $(\mathbb{CP}^n,\omega_{FS})$ has constant  \emph{holomorphic sectional curvature}, that is,  the sectional curvature is constant on \emph{complex lines} in $TX$. See later for definitions.

\item $(\mathbb{B}, \omega_{\mathbb B})$ where $\mathbb{B}\subset \mathbb C^n$ is the unit ball and $\omega_{\mathbb B}=\sqrt{-1}\partial\overline\partial\log(1-\sum |z_j|^2)$ for $(z_j)$ standard coordinates.
\item Submanifolds of K\"ahler manifolds are naturally  K\"ahler. In particular, algebraic manifolds (here meaning zeros of homogeneus polynomial in $\mathbb{CP}^n$) are K\"ahler. Note that submanifolds of $\mathbb{CP}^n$ with restricted $\omega_{FS}$ are  almost never K\"ahler-Einstein for the induced metric.
\end{itemize}

We compare the real and the complex viewpoints. Starting from $(X^n,h)$ Hermitian, we can define a Riemannian metric as
$$ g(Y,Z):=2\Re h(Y^{(1,0)},Z^{(1,0)}) $$
which is {\em Hermitian} in the sense that $J$ is an isometry: $g(JY,JZ)=g(Y,Z)$.
Conversely, we recover $h_{j\bar k}=\frac{1}{2} g^{\mathbb C} (\partial_{z_j},\partial_{\bar z_k})$.
Moreover,
$$ \omega=g(J\_,\_) . $$
In the following, we will confuse $h$, $\omega$, $g$ on a complex manifold $X^n$.

The K\"ahler condition $d\omega=0$ gives a K\"ahler class $[\omega]\in H^2(X;\mathbb R)$.
Notice that, at $p$, we can always assume that $h_{j\bar k}(p)=\delta_{jk}$. From here, one can easily see that $dV_g=\frac{\omega^n}{n!}$. This implies that, if $(X^n,\omega)$ is compact K\"ahler, then the volume is cohomological: $\mathrm{Vol}_\omega=\mathrm{Vol}_{[\omega]}$, and the even Betti numbers are positive: $b_{2k}>0$ for any $k \in \{0, \ldots, n\}$.

We give now another interpretation of the K\"ahler condition. Take $(M^{2n},J,g)$ where $g$ is a Riemannian metric and $J\in\mathrm{End}(TM)$ a compatible almost complex structure, {\itshape i.e.}  a pointwise  isometry such that $J^2=-\mathrm{id}$.
\begin{proposition}\label{LC}
Let $g$ be a Riemannian metric and $J$ be a compatible almost complex structure on $M^{2n}$.
Then $g$ is K\"ahler if and only if $\nabla^{LC}J=0$, for the Levi Civita connection $\nabla^{LC}$.
\end{proposition}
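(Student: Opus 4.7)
The plan is to reduce both directions of the equivalence to a single ``master identity'' relating the three objects $\nabla^{LC} J$, $d\omega$, and the Nijenhuis tensor $N_J$, and then read off the statement.

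First, I would observe that since $\nabla^{LC}$ is the Levi-Civita connection of $g$ and $\omega = g(J\cdot,\cdot)$, the condition $\nabla^{LC} J = 0$ is equivalent to $\nabla^{LC}\omega = 0$ (using $\nabla^{LC} g = 0$ and the compatibility $g(J\cdot,J\cdot) = g(\cdot,\cdot)$). Since $\nabla^{LC}$ is torsion-free, the exterior derivative of any $k$-form is the total antisymmetrization of its covariant derivative; hence $\nabla^{LC}\omega = 0$ immediately yields $d\omega = 0$. This gives the easy half of one direction. Similarly, there is a well-known formula expressing $N_J$ as an alternating sum of terms of the form $(\nabla^{LC}_{\cdot}J)\cdot$ and $J(\nabla^{LC}_{\cdot}J)\cdot$, obtained by rewriting each Lie bracket in the definition of $N_J$ via the torsion-free property $[X,Y] = \nabla^{LC}_X Y - \nabla^{LC}_Y X$. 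Consequently $\nabla^{LC}J = 0$ also forces $N_J = 0$, so $J$ is integrable and $(X,g)$ is K\"ahler.

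For the converse, the core step is to establish the identity
$$
2\,g\bigl((\nabla^{LC}_X J)Y, Z\bigr) \;=\; d\omega(X,Y,Z) - d\omega(X, JY, JZ) + 2\,g\bigl(N_J(Y,Z), JX\bigr),
$$
valid for any almost Hermitian structure. Given this, if $d\omega = 0$ and $N_J = 0$, the right-hand side vanishes for all $X,Y,Z$, and non-degeneracy of $g$ yields $\nabla^{LC}J = 0$.

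To prove the master identity, I would work from the Koszul formula
$$
2\,g(\nabla^{LC}_X Y, Z) = Xg(Y,Z) + Yg(X,Z) - Zg(X,Y) + g([X,Y],Z) - g([X,Z],Y) - g([Y,Z],X),
$$
expand $2g((\nabla^{LC}_X J)Y,Z) = 2g(\nabla^{LC}_X(JY),Z) - 2g(J\nabla^{LC}_X Y, Z)$, and then use $g(J\cdot,\cdot) = \omega(\cdot,\cdot)$ to rewrite each term. On the other side, I would expand $d\omega(X,Y,Z)$ and $d\omega(X,JY,JZ)$ using the invariant formula $d\omega(A,B,C) = A\omega(B,C) + B\omega(C,A) + C\omega(A,B) - \omega([A,B],C) - \omega([B,C],A) - \omega([C,A],B)$, together with the compatibility $\omega(J\cdot,J\cdot) = \omega(\cdot,\cdot)$, and group the brackets into combinations that reproduce $N_J$. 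The main obstacle is precisely this bookkeeping computation: one must carefully track the twelve or so terms and verify that the ``non-bracket'' pieces cancel while the bracket pieces reorganize into the Nijenhuis contribution with the correct sign. Once this identity is in hand, both implications are one line each, and the proposition follows.
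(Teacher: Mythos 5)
Your proposal is correct and follows essentially the same route as the paper: the easy direction by expressing $N_J$ and $d\omega$ through $\nabla^{LC}J$ (equivalently $\nabla^{LC}\omega$) using torsion-freeness, and the converse via the identity $2\,g\bigl((\nabla^{LC}_X J)Y,Z\bigr)=d\omega(X,Y,Z)-d\omega(X,JY,JZ)+(\text{Nijenhuis term})$, which is exactly the formula the paper invokes (your Nijenhuis term differs only by the normalization of $N_J$ and the identity $N_J(Y,JZ)=-JN_J(Y,Z)$, which is immaterial since the term is only used when $N_J=0$).
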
 
Geometrically, this means that in the K\"ahler case, complex rotation by $J$ commutes with parallel transport. That is, we immediately get:
\begin{corollary}
Let $g$ be a Riemannian metric and $J$ be a compatible almost complex structure on $M^{2n}$.
Then $g$ is K\"ahler if and only if $\mathrm{Hol}(M^{2n},g)\subseteq \mathrm{U}(n)$.
\end{corollary}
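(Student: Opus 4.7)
The plan is to deduce the corollary from Proposition~\ref{LC} together with the standard \emph{holonomy principle}, which identifies parallel tensor fields with holonomy-invariant tensors at a point. By Proposition~\ref{LC}, $g$ is K\"ahler if and only if $\nabla^{LC}J=0$, so it suffices to prove the equivalence
\[
\nabla^{LC}J=0 \quad\Longleftrightarrow\quad \mathrm{Hol}(M^{2n},g)\subseteq \mathrm{U}(n).
\]

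First, I would pin down the relevant pointwise algebra. Fix a basepoint $p\in M^{2n}$ and identify $(T_pM, g_p, J_p)$ with $(\mathbb{R}^{2n},\langle\cdot,\cdot\rangle, J_0)$. Since the Levi-Civita connection is metric, $\nabla^{LC}g=0$ and the Riemannian holonomy automatically satisfies $\mathrm{Hol}_p(M,g)\subseteq \mathrm{O}(T_pM,g_p)\cong\mathrm{O}(2n)$. The common stabilizer in $\mathrm{GL}(T_pM)$ of the pair $(g_p,J_p)$ is, by definition, the unitary group $\mathrm{U}(T_pM,g_p,J_p)\cong\mathrm{U}(n)$. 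Consequently, $\mathrm{Hol}_p(M,g)\subseteq \mathrm{U}(n)$ is equivalent to the single additional condition that every holonomy transformation fixes the tensor $J_p$.

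Next I would invoke the holonomy principle: a smooth tensor field $T$ on $M$ is parallel with respect to a connection $\nabla$ if and only if $T_p$ is invariant under $\mathrm{Hol}_p(M,\nabla)$ at some (equivalently any) basepoint $p$. The forward implication is immediate, since parallel transport around any loop at $p$ must commute with the restriction $T_p$ of a parallel tensor field. The reverse implication is proved by spreading $T_p$ to a global tensor field $T$ by parallel transport along paths based at $p$; holonomy-invariance of $T_p$ guarantees that this definition is path-independent, and the resulting field is parallel by construction. Applying this to $T=J$ and $\nabla=\nabla^{LC}$, I obtain $\nabla^{LC}J=0$ if and only if $J_p$ is fixed by $\mathrm{Hol}_p(M,g)$, which by the previous paragraph is equivalent to $\mathrm{Hol}(M^{2n},g)\subseteq\mathrm{U}(n)$.

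The only substantive ingredient is the holonomy principle itself, and in particular its ``if'' direction, where the construction of the parallel extension must be shown to be path-independent; this is a standard parallel-transport argument and no serious obstacle is expected. Once this dictionary between parallel tensors and holonomy-fixed tensors is in place, the corollary follows from Proposition~\ref{LC} and the elementary group-theoretic identification $\mathrm{O}(2n)\cap\mathrm{GL}(n,\mathbb{C})=\mathrm{U}(n)$.
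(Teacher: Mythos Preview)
Your approach is exactly what the paper intends: the corollary appears immediately after Proposition~\ref{LC} with the one-line justification that ``complex rotation by $J$ commutes with parallel transport,'' and no further argument is given. You have simply unpacked this via the holonomy principle, which is the natural thing to do.

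One point deserves care, however. Your formulation of the holonomy principle as a biconditional for a \emph{given} tensor field $T$ is not quite right. The forward implication ($T$ parallel $\Rightarrow$ $T_p$ holonomy-invariant) is fine, but the converse only yields that a holonomy-invariant $T_p$ extends to \emph{some} parallel tensor field $\tilde T$ with $\tilde T_p=T_p$; it does not force the originally given global $T$ to be parallel. (On flat $\mathbb{R}^{2n}$ the holonomy is trivial, so every $J_p$ is holonomy-invariant, yet a non-constant compatible $J$ is certainly not parallel.) Thus from $\mathrm{Hol}_p(M,g)\subseteq \mathrm{U}(T_pM,g_p,J_p)$ you obtain a parallel $\tilde J$ agreeing with $J$ at $p$, and Proposition~\ref{LC} makes $(M,g,\tilde J)$ K\"ahler---but not, a priori, $(M,g,J)$. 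The corollary as written in the notes is correspondingly informal on this point; the precise statement your argument actually establishes is that $(M^{2n},g)$ admits \emph{a} compatible K\"ahler structure if and only if $\mathrm{Hol}(M^{2n},g)\subseteq\mathrm{U}(n)$. This is the standard reading, and with that caveat your proof is correct and aligned with the paper.
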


The proof of Proposition \ref{LC} goes as follow. For the "$\Leftarrow$" direction, using that $\nabla^{LC}$ is torsion free, we can rewrite the Nijenhuis  tensor as $ 4 N_J(Y,T)=(\nabla^{LC}_{JY}J)(T)-J((\nabla^{LC}_{JY}J)(T))-(\nabla^{LC}_{JT}J)(Y)+J((\nabla^{LC}_{T}J)(Y))$. Thus if $J$ is parallel, $N_J$ vanishes and $J$ is integrable. Moreover, by expressing the differential of  the associated two form   $\omega=g(J\_,\_)$  using the covariant derivative 
$ d\omega(Y,T,Z)=(\nabla^{LC}_Y\omega)(T,Z)-(\nabla^{LC}_T\omega)(Y,Z)+(\nabla^{LC}_Z\omega)(Y,T),$ we see that $\omega$ must be closed since $\omega$ is parallel, being so $g$ and $J$. That is, $g$ is K\"ahler. Conversely, it is easy (but slightly long) to check that  
$ 2 g((\nabla^{LC}_XJ)Y,Z) = d\omega(X, Y, Z) - d\omega(X, JY, JZ) - g (N_J(Y,JZ), X)$. Hence $J$ has to be parallel if $J$ is integrable and $\omega$ is closed.

Finally, a fundamental tool in the following section will be the:
\begin{theorem}[{$\sqrt{-1}\partial\overline\partial$-Lemma \cite[pages 266--267]{deligne-griffiths-morgan-sullivan}}]
Let $X^n$ be a compact K\"ahler manifold, and let $\eta$ be a real $(1,1)$-form such that $\eta=d\alpha$ for some $1$-form $\alpha$. Then there exists $f\in\mathcal{C}^\infty(X;\mathbb R)$ such that $\eta=\sqrt{-1}\partial\overline\partial f$.
\end{theorem}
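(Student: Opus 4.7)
The plan is to combine the $\overline\partial$-Hodge decomposition on $X^n$ with the K\"ahler identity $\Delta_d = 2\Delta_{\overline\partial}$, which forces any $\overline\partial$-harmonic form to be $d$-harmonic and hence both $\partial$- and $\overline\partial$-closed. First I would reduce to a real primitive: since $d$ is a real operator and $\eta$ is real, $\eta = d(\Re\alpha)$, so we may replace $\alpha$ by its real part and assume $\alpha^{0,1} = \overline{\alpha^{1,0}}$ in the bidegree splitting $\alpha = \alpha^{1,0} + \alpha^{0,1}$. Expanding $d\alpha = \partial\alpha^{1,0} + (\overline\partial\alpha^{1,0} + \partial\alpha^{0,1}) + \overline\partial\alpha^{0,1}$ and using that $\eta$ has pure type $(1,1)$ kills the $(2,0)$ and $(0,2)$ components, giving $\partial\alpha^{1,0} = 0 = \overline\partial\alpha^{0,1}$ and
\[
\eta = \overline\partial\alpha^{1,0} + \partial\alpha^{0,1}.
\]

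Next, apply Hodge decomposition to the $\overline\partial$-closed $(0,1)$-form $\alpha^{0,1}$:
\[
\alpha^{0,1} = h^{0,1} + \overline\partial g + \overline\partial^*\beta,
\]
with $h^{0,1}$ harmonic. Applying $\overline\partial$ to this identity and using $\overline\partial\alpha^{0,1}=0$ gives $\overline\partial\overline\partial^*\beta=0$, whence $\|\overline\partial^*\beta\|^2 = \langle\beta,\overline\partial\overline\partial^*\beta\rangle = 0$ and $\overline\partial^*\beta=0$; so $\alpha^{0,1} = h^{0,1} + \overline\partial g$ for some $g\in\mathcal{C}^\infty(X;\mathbb{C})$. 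Now comes the crucial K\"ahler input: the K\"ahler identities $[\Lambda,\overline\partial]=-\sqrt{-1}\,\partial^*$ and $[\Lambda,\partial]=\sqrt{-1}\,\overline\partial^*$ yield $\Delta_d = 2\Delta_\partial = 2\Delta_{\overline\partial}$, so the $\overline\partial$-harmonic form $h^{0,1}$ is $d$-harmonic, whence $\partial h^{0,1}=0$. Therefore
\[
\partial\alpha^{0,1} = \partial\overline\partial g, \qquad \overline\partial\alpha^{1,0} = \overline{\partial\alpha^{0,1}} = -\partial\overline\partial\bar g,
\]
and summing, $\eta = \partial\overline\partial(g-\bar g) = 2\sqrt{-1}\,\partial\overline\partial(\Im g)$. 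Setting $f:=2\,\Im g\in\mathcal{C}^\infty(X;\R)$ gives $\eta = \sqrt{-1}\,\partial\overline\partial f$.

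The pivotal step — and the only genuine use of the K\"ahler hypothesis — is the identification of $\overline\partial$-harmonic with $d$-harmonic that produces $\partial h^{0,1}=0$. On a general compact complex manifold the three Laplacians $\Delta_d$, $\Delta_\partial$, $\Delta_{\overline\partial}$ disagree, a $\overline\partial$-harmonic representative need not be $\partial$-closed, and the conclusion of the lemma can genuinely fail; indeed, validity of the $\sqrt{-1}\partial\overline\partial$-lemma has become a standard cohomological diagnostic of K\"ahler-type behaviour. The remaining ingredients — existence of the $\overline\partial$-Hodge decomposition, the real-part reduction, and bidegree bookkeeping — are standard once $\Delta_{\overline\partial}$ is known to be elliptic with finite-dimensional kernel, so no further obstacle is expected.
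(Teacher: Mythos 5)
Your proof is correct and follows essentially the same route as the paper: both arguments isolate the $(0,1)$ (resp. $(1,0)$) component of a real primitive $\alpha$, use Hodge theory for the $\overline\partial$- (resp. $\partial$-) complex to extract a function potential, and invoke the K\"ahler identity $\tfrac12\Delta_d=\Delta_\partial=\Delta_{\overline\partial}$ to kill the harmonic obstruction, ending with $f$ a multiple of $\Im g$. The only difference is packaging: the paper solves the Poisson equation $\tfrac12\Delta_d\varphi=-\partial^*\beta$ directly and observes that $\beta+\partial\varphi$ is harmonic, which is exactly your Hodge-decomposition step in disguise.
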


\begin{proof}
We briefly review the proof. Write $\alpha=\beta+\bar\beta$ for $\beta\in\wedge^{1,0}X$ and note that $\partial\beta=0$. Consider $\partial^*$, the $L^2$-adjoint of $\partial$ with respect to the $L^2$-pairing $(\varphi,\psi)_{L^2}:=\int_X g^{p \bar j} g^{k\bar q} \varphi_{p\bar q}\overline{\psi_{j \bar k}} dV_g$. Here Einstein's summation convention to be understood and $g^{p \bar{j}}$ is the inverse of the Hermitian metric $g$.  As for $\Delta_d=dd^*+d^*d$, we can analogously define the Laplacians $\Delta_\partial$ (and $\Delta_{\bar\partial})$. A fundamental fact is that for a K\"ahler metric, $\frac{1}{2}\Delta_d= \Delta_\partial=\Delta_{\bar\partial}$ (this is a consequence of  the so-called K\"ahler identities, see {\itshape e.g.} \cite[Theorem 6.7]{voisin-1}, \cite[Proposition 3.1.12]{huybrechts}).

Since we have $\int_X \partial^*\beta \frac{\omega^n}{n!}=0$,  we can solve the equation
$$ \frac{1}{2} \Delta_d\varphi = -\partial^*\beta $$
for $\varphi$ smooth. Thus we get
$ \partial^*(\beta+\partial\varphi)=0$ and $ \partial(\beta+\partial\varphi)=0$,
that is, $\beta+\partial\varphi$ is $\partial$-harmonic. Since  $\Delta_\partial=\Delta_{\overline\partial}$, $\beta+\partial\varphi$ is also $\overline\partial$-harmonic; in particular, it is $\overline\partial$-closed: $\overline\partial\beta=\partial\overline\partial\varphi$. Finally, $\eta=d\alpha=\overline\partial\beta+\partial\overline\beta=\frac{\sqrt{-1}}{2}\partial\overline\partial\Im \varphi$.
\end{proof}

This lemma allows to identify the space $\mathcal{K}_{[\omega]}$ of K\"ahler metrics in the K\"ahler class $[\omega]\in H^2(X;\mathbb R)$ on $X^n$ with the set
$$ \left\{ \omega+\sqrt{-1}\partial\overline\partial\varphi>0 \;:\; \varphi\in\mathcal{C}^\infty(X;\mathbb R) \right\}, $$
once fixed some background metric $\omega$.
This is the first ingredient that explains why the K\"ahler-Einstein problem is easier than the general Einstein problem: in order to find a KE metric in a fixed cohomology class we need to look for a \emph{function} $\varphi$ and not for a \emph{tensor}.
 The other reason is that there is a very nice formula for the Ricci curvature of a K\"ahler metric, as we are going to explain.

\subsection{Curvatures of K\"ahler metrics}

Consider $(M^{2n},g)$. In this section, we denote by $\nabla:=\nabla^{LC}_g$ its Levi Civita connection, and by $\nabla^{\mathbb C}$ its complexification to $TM\otimes\mathbb C$. Note that $\nabla^{\mathbb C}_{\bar v}\bar w = \overline{\nabla^{\mathbb C}_v w}$. The complexified {\em Christoffel symbols} are defined as $ \nabla_{\partial_{z_i}}^{\mathbb C} \partial_{z_j} = \sum_k\left( \Gamma_{ij}^k \partial_{z_k} + \Gamma_{ij}^{\bar k} \partial_{z_{\bar k}}\right)$ and similar. See {\itshape e.g.} \cite{gabor-book}, \cite[Section 1.2]{tian-book} for more details on what follows.

\begin{lemma}\label{lemma:christoffel-kahler}
Let $g$ be a K\"ahler metric on $M^{2n}$. Then the only non-zero Christoffel symbols are $\Gamma_{ij}^k$ and $\Gamma_{\bar i \bar j}^{\bar k} = \overline{\Gamma_{ij}^k}$.
Moreover, $\Gamma_{ij}^{k}=g^{k\bar\ell}\partial_{z_j}g_{i\bar \ell}$.
\end{lemma}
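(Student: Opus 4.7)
The plan is to exploit two structural properties of the Levi-Civita connection $\nabla$ of a K\"ahler metric: (i) by Proposition \ref{LC}, $\nabla J = 0$, so the complexified connection $\nabla^{\mathbb{C}}$ preserves the type decomposition $TM\otimes\mathbb{C} = TM^{(1,0)}\oplus TM^{(0,1)}$; and (ii) $\nabla$ is torsion-free, and since $[\partial_{z_i},\partial_{z_j}]=[\partial_{z_i},\partial_{\bar z_j}]=0$, the covariant derivatives along coordinate vector fields are symmetric in their arguments.

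Step 1: Use the type preservation. Writing $\nabla^{\mathbb{C}}_{\partial_{z_i}}\partial_{z_j}=\Gamma_{ij}^k\partial_{z_k}+\Gamma_{ij}^{\bar k}\partial_{\bar z_k}$ and noting that the left-hand side must lie in $TM^{(1,0)}$ (because $\partial_{z_j}$ does, and $\nabla^{\mathbb{C}}$ preserves types when $J$ is parallel), we get $\Gamma_{ij}^{\bar k}=0$. The same argument applied to $\nabla^{\mathbb{C}}_{\partial_{z_i}}\partial_{\bar z_j}\in TM^{(0,1)}$ yields $\Gamma_{i\bar j}^{k}=0$, and symmetrically for the conjugate combinations. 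Thus the only Christoffel symbols that can be nonzero are $\Gamma_{ij}^k$, $\Gamma_{\bar i\bar j}^{\bar k}$, $\Gamma_{i \bar j}^{\bar k}$, and $\Gamma_{\bar i j}^{k}$.

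Step 2: Kill the remaining mixed symbols via torsion-freeness. Since $[\partial_{z_i},\partial_{\bar z_j}]=0$, being torsion-free gives $\nabla^{\mathbb{C}}_{\partial_{z_i}}\partial_{\bar z_j}=\nabla^{\mathbb{C}}_{\partial_{\bar z_j}}\partial_{z_i}$. But by Step 1, the left side lies in $TM^{(0,1)}$ and the right side lies in $TM^{(1,0)}$, so both vanish. Hence $\Gamma_{i\bar j}^{\bar k}=\Gamma_{\bar j i}^{k}=0$, leaving only $\Gamma_{ij}^k$ and $\Gamma_{\bar i \bar j}^{\bar k}$. The identity $\Gamma_{\bar i\bar j}^{\bar k}=\overline{\Gamma_{ij}^k}$ is immediate from the reality property $\nabla^{\mathbb{C}}_{\bar v}\bar w=\overline{\nabla^{\mathbb{C}}_v w}$ stated at the start of the subsection.

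Step 3: Derive the explicit formula from metric compatibility. Applying $\partial_{z_j}$ to $g_{i\bar\ell}=g(\partial_{z_i},\partial_{\bar z_\ell})$ and using metric compatibility,
\[
\partial_{z_j}g_{i\bar\ell} \;=\; g\bigl(\nabla^{\mathbb{C}}_{\partial_{z_j}}\partial_{z_i},\partial_{\bar z_\ell}\bigr) + g\bigl(\partial_{z_i},\nabla^{\mathbb{C}}_{\partial_{z_j}}\partial_{\bar z_\ell}\bigr).
\]
By Steps 1--2, $\nabla^{\mathbb{C}}_{\partial_{z_j}}\partial_{\bar z_\ell}=0$, so the second term drops out, and $\nabla^{\mathbb{C}}_{\partial_{z_j}}\partial_{z_i}=\Gamma_{ji}^k\partial_{z_k}$. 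Torsion-freeness together with $[\partial_{z_i},\partial_{z_j}]=0$ forces $\Gamma_{ji}^k=\Gamma_{ij}^k$, hence $\partial_{z_j}g_{i\bar\ell}=\Gamma_{ij}^k g_{k\bar\ell}$, and contracting with $g^{k\bar\ell}$ yields the stated formula.

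The only real subtlety is Step 1, since one must justify that the real Levi-Civita connection genuinely preserves the complexified type decomposition; this is the content of $\nabla J=0$, but it is worth stating carefully to avoid confusion between the real and the complexified settings. The rest is routine bookkeeping.
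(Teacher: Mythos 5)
Your proof is correct and follows essentially the same route as the paper's: use $\nabla J=0$ (type preservation of $\nabla^{\mathbb C}$) to kill the mixed Christoffel symbols, then metric compatibility $\nabla g=0$ to obtain $\Gamma_{ij}^k=g^{k\bar\ell}\partial_{z_j}g_{i\bar\ell}$. Your Step 2, invoking torsion-freeness to dispose of $\Gamma_{i\bar j}^{\bar k}$ (which type preservation alone does not reach), merely makes explicit what the paper compresses into ``the other is similar,'' so this is a welcome clarification rather than a different argument.
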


Let us check $\Gamma_{ij}^{\bar k}=0$, for the other is similar.
Since $\nabla J=0$, then $\nabla^{\C}_{\partial_{z_i}} (J^{\C} \partial_{z_j})= J^{\C}(\nabla^{\C}_{\partial_{z_i}} ( \partial_{z_j}))$, that is: $\sqrt{-1} \sum_k\left( \Gamma_{ij}^k \partial_{z_k} + \Gamma_{ij}^{\bar k} \partial_{z_{\bar k}}\right)=  \sum_k\left( \sqrt{-1} \Gamma_{ij}^k  \partial_{z_k} - \sqrt{-1}  \Gamma_{ij}^{\bar k} \partial_{z_{\bar k}}\right)$. Thus, $\Gamma_{ij}^{\bar k}=0$.  Next,  using $\nabla g=0$ and the vanishing of mixed Christoffel, $\partial_{z_k} g_{i\bar{j}}=g(\nabla_{\partial_{z_k}}\partial_{z_i}, \partial_{\bar{z}_j})=g (\Gamma_{ki}^{l} \partial_{z_l}, \partial_{\bar{z}_j}) = \Gamma_{ki}^{l} g_{l \bar{j}}$. By multiplying with the inverse the proof of the lemma is concluded.

We now consider the {\em Riemannian curvature} operator:
$$ \mathrm{Rm}^{\C}(v,w)z := \nabla^{\mathbb C}_v \nabla^{\mathbb C}_w z - \nabla^{\mathbb C}_w \nabla^{\mathbb C}_v z - \nabla^{\mathbb C}_{[v,w]} z . $$
Clearly, $\mathrm{Rm}(\bar v, \bar w)\bar z = \overline{\mathrm{Rm}(v,w)z}$.
By using that $\nabla J=0$, we have that $\mathrm{Rm}(v,w)Jz = J\mathrm{Rm}(v,w)z$. This implies that $\mathrm{Rm}(z,w)(TM^{1,0})\subseteq TM^{1,0}$, and also $\mathrm{Rm}(TM^{1,0},TM^{1,0})=0$ thanks to the symmetries of $\mathrm{Rm}$.
This means that the only interesting coefficients of $\mathrm{Rm}$ are
$$ R_{i\bar j k \bar \ell} := g(\mathrm{Rm}(\partial_{z_i}, \partial_{\bar z_j})\partial_{z_k}, \partial_{\bar z_{\ell}}) $$
and $R_{\bar ij \bar k \ell}=\overline{R_{i\bar j k \bar \ell}}$.
We have a nice expression for these coefficients:

\begin{proposition}\label{lemma:riem-kahler}
Let $g$ be a K\"ahler metric on $M^{2n}$. Then $ R_{i\bar j k \bar \ell} = -g_{i\bar j, k\bar \ell} + g^{m\bar n} g_{i\bar n,k}g_{m\bar j,\bar \ell} .$
\end{proposition}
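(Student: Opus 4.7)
The plan is a direct computation built from the two preceding ingredients: the coordinate expression for the Christoffel symbols on a K\"ahler manifold (Lemma \ref{lemma:christoffel-kahler}) and the K\"ahler symmetry $\partial_{z_\ell}g_{i\bar j}=\partial_{z_i}g_{\ell\bar j}$ (and its conjugate). I would unwind the definition
\[
R_{i\bar j k\bar\ell}=g\bigl(\nabla^{\mathbb C}_{\partial_{z_i}}\nabla^{\mathbb C}_{\partial_{\bar z_j}}\partial_{z_k}-\nabla^{\mathbb C}_{\partial_{\bar z_j}}\nabla^{\mathbb C}_{\partial_{z_i}}\partial_{z_k}-\nabla^{\mathbb C}_{[\partial_{z_i},\partial_{\bar z_j}]}\partial_{z_k},\partial_{\bar z_\ell}\bigr)
\]
and discard the bracket term, since coordinate vector fields commute. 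By Lemma \ref{lemma:christoffel-kahler}, the only non-vanishing Christoffels are pure types $\Gamma_{ij}^k$ and $\Gamma_{\bar i\bar j}^{\bar k}$, hence any mixed-type covariant derivative like $\nabla^{\mathbb C}_{\partial_{\bar z_j}}\partial_{z_k}$ vanishes identically. This kills the first summand above and reduces the curvature to
\[
R_{i\bar j k\bar\ell}=-g\bigl(\nabla^{\mathbb C}_{\partial_{\bar z_j}}\nabla^{\mathbb C}_{\partial_{z_i}}\partial_{z_k},\partial_{\bar z_\ell}\bigr).
\]

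Next, using $\nabla^{\mathbb C}_{\partial_{z_i}}\partial_{z_k}=\Gamma_{ik}^{m}\partial_{z_m}$ and then applying $\nabla^{\mathbb C}_{\partial_{\bar z_j}}$, the derivative of the coefficient survives while the term $\Gamma_{ik}^m\nabla^{\mathbb C}_{\partial_{\bar z_j}}\partial_{z_m}$ drops out for the same mixed-type reason. Pairing with $\partial_{\bar z_\ell}$ yields
\[
R_{i\bar j k\bar\ell}=-\bigl(\partial_{\bar z_j}\Gamma_{ik}^{m}\bigr)\,g_{m\bar\ell}.
\]
Now I substitute $\Gamma_{ik}^m=g^{m\bar n}\partial_{z_k}g_{i\bar n}$ and compute the $\bar z_j$-derivative using the standard identity $\partial_{\bar z_j}g^{m\bar n}=-g^{m\bar s}g^{p\bar n}\,\partial_{\bar z_j}g_{p\bar s}$, obtained by differentiating $g^{m\bar n}g_{p\bar n}=\delta^m_p$. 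Multiplying by $-g_{m\bar\ell}$ collapses one factor of metric against inverse metric into a Kronecker delta, giving
\[
R_{i\bar j k\bar\ell}=-\,\partial_{z_k}\partial_{\bar z_j}g_{i\bar\ell}+g^{p\bar n}\bigl(\partial_{z_k}g_{i\bar n}\bigr)\bigl(\partial_{\bar z_j}g_{p\bar\ell}\bigr).
\]

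Finally I would convert this to the stated form by applying the K\"ahler PDEs $\partial_{z_k}\partial_{\bar z_j}g_{i\bar\ell}=\partial_{z_k}\partial_{\bar z_\ell}g_{i\bar j}$ (from $\partial_{\bar z_j}g_{i\bar\ell}=\partial_{\bar z_\ell}g_{i\bar j}$) and $\partial_{\bar z_j}g_{p\bar\ell}=\partial_{\bar z_\ell}g_{p\bar j}$, then relabelling $p\leadsto m$. This produces exactly $R_{i\bar j k\bar\ell}=-g_{i\bar j,k\bar\ell}+g^{m\bar n}g_{i\bar n,k}g_{m\bar j,\bar\ell}$. The only real obstacle is bookkeeping: one has to be disciplined about which index pairs the K\"ahler symmetry swaps, and careful with signs when differentiating the inverse metric; everything else is forced by the vanishing of the mixed Christoffels, which is the genuine simplification provided by the K\"ahler condition.
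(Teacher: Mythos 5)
Your computation is correct and is essentially the paper's own proof: both reduce $R_{i\bar j k\bar\ell}$ to $-\bigl(\partial_{\bar z_j}\Gamma^m_{ik}\bigr)g_{m\bar\ell}$ using the vanishing of mixed Christoffel symbols, then differentiate $\Gamma^m_{ik}=g^{m\bar n}g_{i\bar n,k}$ with the identity for $\partial_{\bar z_j}g^{m\bar n}$ and finish with the K\"ahler symmetry and a relabelling of indices. No gaps.
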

We have denoted with  $g_{i\bar j, k\bar \ell}:=\partial_{z_k \bar z_\ell}^2 g_{i\bar j}$ (and similar) for short. Here the proof: $$R_{i\bar j k \bar \ell} = -g(\nabla_{\bar j}(\nabla_i \partial_k), \partial_{\bar \ell})=- g(\nabla_{\bar j}(\Gamma_{ik}^m \partial_m), \partial_{\bar \ell}) \stackrel{\text{Leibniz}}{=} -g (\Gamma_{ik,\bar j}^m \partial_m, \partial_{\bar \ell})= \Gamma_{ik,\bar j}^m g_{m \bar \ell}.$$
But $\Gamma_{ik,\bar j}^m =-\partial_{\bar j}(g^{m \bar q}g_{i \bar{q}, k})={g^{m\bar{q}}}_{,\bar j}g_{i \bar{q}, k}+ g^{m \bar q}g_{i \bar{q}, k \bar l}$. Since ${g^{i \bar j}}_{, \bar k}=-g^{i\bar m}g^{l \bar j} g_{l \bar m, \bar k}$, we get
$$R_{i\bar j k \bar \ell} =-g_{m\bar{\ell}}(-g^{m\bar{r}} g ^{s \bar q}g_{s \bar{r},\bar{j}}g_{i\bar q, k} + g^{m \bar{q}} g_{i \bar{q}, n \bar{j}})= -g_{i\bar j, k\bar \ell} + g^{m\bar n} g_{i\bar n,k}g_{m\bar j,\bar \ell}, $$
where for the last step we have used the K\"ahler property and renamed the variables.

Define now the {\em Ricci tensor}
$$ \mathrm{Ric}_{i \bar j} := g^{k\bar \ell} R_{i\bar j k \bar \ell} , $$
and the {\em scalar curvature}
$$ \mathrm{Sc} := g^{i\bar j} \mathrm{Ric}_{i\bar j} . $$

For K\"ahler metrics, we have the following nice expression: 
\begin{proposition}\label{prop:ricci-kahler}
Let $g$ be a K\"ahler metric on $M^{2n}$. Then $ \mathrm{Ric}_{i\bar{j}} = -\partial^2_{i {\bar j}} \log \det (g_{k\bar \ell}) . $
\end{proposition}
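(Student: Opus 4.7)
The plan is to compute $-\partial^2_{i\bar j}\log\det(g_{k\bar\ell})$ directly and match it term by term with the contraction $g^{k\bar\ell}R_{i\bar j k \bar\ell}$, using the formula for $R_{i\bar j k\bar\ell}$ supplied by Proposition \ref{lemma:riem-kahler}. The only analytic ingredients I need are Jacobi's formula for differentiating a log-determinant, the standard formula for the derivative of the inverse of a matrix, and the Kähler symmetries $\partial_{z_i} g_{k\bar\ell}=\partial_{z_k} g_{i\bar\ell}$ and $\partial_{\bar z_j} g_{k\bar\ell}=\partial_{\bar z_\ell} g_{k\bar j}$.

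Concretely, I would first apply Jacobi to write $\partial_{z_i}\log\det(g_{k\bar\ell}) = g^{k\bar\ell}\,\partial_{z_i} g_{k\bar\ell}$, and then differentiate once more in $\bar z_j$ with the Leibniz rule. Using $\partial_{\bar z_j} g^{k\bar\ell} = -g^{k\bar n}g^{m\bar\ell}\,\partial_{\bar z_j} g_{m\bar n}$, this gives
$$
\partial^2_{i\bar j}\log\det(g_{k\bar\ell})
= g^{k\bar\ell}\,\partial^2_{i\bar j}g_{k\bar\ell}
- g^{k\bar n}g^{m\bar\ell}\,(\partial_{\bar z_j} g_{m\bar n})(\partial_{z_i} g_{k\bar\ell}).
$$
Now the Kähler symmetries come in. In the first term, $\partial^2_{i\bar j}g_{k\bar\ell}=\partial^2_{k\bar\ell}g_{i\bar j}=g_{i\bar j,k\bar\ell}$ after swapping $i\leftrightarrow k$ and $\bar j\leftrightarrow\bar\ell$ using the Kähler PDEs. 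In the second term, applying $\partial_{z_i}g_{k\bar\ell}=\partial_{z_k}g_{i\bar\ell}$ and $\partial_{\bar z_j}g_{m\bar n}=\partial_{\bar z_n}g_{m\bar j}$, and then relabelling the dummy indices $\ell\leftrightarrow n$, one recognizes exactly $g^{k\bar\ell}g^{m\bar n}g_{i\bar n,k}g_{m\bar j,\bar\ell}$.

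Negating and comparing with $\mathrm{Ric}_{i\bar j} = g^{k\bar\ell}R_{i\bar j k\bar\ell}$, where Proposition \ref{lemma:riem-kahler} yields
$$
g^{k\bar\ell}R_{i\bar j k\bar\ell}
= -g^{k\bar\ell}g_{i\bar j,k\bar\ell} + g^{k\bar\ell}g^{m\bar n}g_{i\bar n,k}g_{m\bar j,\bar\ell},
$$
the two expressions agree termwise, which closes the proof.

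The only genuine difficulty is index bookkeeping: one must track four pairs of indices across two applications of the Kähler identity and one application of the inverse-matrix derivative rule, and correctly rename dummies so that the quadratic term matches exactly the one appearing in the contracted Riemann formula. Conceptually there is no obstacle — indeed, a more invariant way to encode this computation is to say that the formula $\rho = -\sqrt{-1}\partial\overline\partial\log\det(g_{k\bar\ell})$ for the Ricci form expresses the (minus) Chern curvature of the canonical bundle $K_X$ endowed with the Hermitian metric induced from $g$, but I would present the direct index calculation since all the other ingredients of the section are already set up in coordinates.
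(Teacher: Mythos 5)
Your proposal is correct and follows essentially the same route as the paper: apply Jacobi's formula $\partial\log\det A=\mathrm{tr}(A^{-1}\partial A)$, differentiate once more using the derivative of the inverse metric, invoke the K\"ahler symmetries, and match the result with the contraction $g^{k\bar\ell}R_{i\bar j k\bar\ell}$ given by Proposition \ref{lemma:riem-kahler}. The paper merely compresses this into one line (differentiating in the opposite order), so your version just makes the same index bookkeeping explicit.
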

The proof follows by recalling the  formula $\partial(\log\det A)=\mathrm{tr}(A^{-1}\partial A)$, and plugging in the above formula for the curvature: $-\partial^2_{i {\bar j}} \log \det (g_{k\bar \ell})=-\partial_i (g^{k\bar \ell}g_{ k \bar \ell, j })= \mathrm{Ric}_{i\bar{j}}.$

Here we remark that clearly  in such K\"ahler case the above are  essentially the only non-vanishing terms of the (complexified) Riemannian Ricci tensor. 

The expression for $\mathrm{Ric}$ in Proposition \ref{prop:ricci-kahler} is local, but it actually yields a global real $(1,1)$-form: define the {\em Ricci form} as
$$ \mathrm{Ric}(\omega) := \sqrt{-1} \mathrm{Ric}_{j\bar k} dz_j \wedge d\bar z_k \stackrel{\text{loc}}{=} -\sqrt{-1}\partial\overline\partial \log \omega^n . $$
Here, $\log\omega^n$ is short version for $\log \det (g_{k\bar \ell})$.  Thus, we can say that in the K\"ahler case the Ricci curvature is simply given by the complex Hessian of the logarithm of the volume form! This beautiful expression will have important consequences in the next section, especially in combination with the above $\sqrt{-1}\partial \bar \partial$-lemma.

 Thus the  \emph{K\"ahler-Einstein} (KE) condition can be phrased in term of $(1,1)$-forms by saying that  $$\mathrm{Ric}(\omega)=\lambda\omega,$$ for some  "cosmological constant" $\lambda\in\mathbb R$. Note that, when $n\geq 2$ (the case $n=1$ being described by the Uniformization Theorem), even taking $\lambda\in\mathcal{C}^\infty(X;\mathbb R)$, the cosmological factor is in fact constant, since $0=d \mathrm{Ric} (\omega)=d\lambda\wedge \omega \, \Rightarrow \, \lambda=\mathrm{constant}$. This is a short K\"ahler version of the  Riemannian Schur lemma, see {\itshape e.g.} \cite[Theorem 1.97]{besse}. More generally, if only the scalar curvature is constant we say that $\omega$ is \emph{cscK}.

Finally, let us note that if $\omega'$ is another K\"ahler metric, then one has
$$ \mathrm{Ric}(\omega')-\mathrm{Ric}(\omega)= \sqrt{-1} \partial\overline\partial \log \frac{\omega^n}{{\omega'}^n} $$
where now $\log \frac{\omega^n}{{\omega'}^n}$ is a global function on $M^{2n}$. Being the Ricci form clearly closed, this means that any K\"ahler metric defines a unique class
$$ [\mathrm{Ric}(\omega)] =: 2\pi c_1(X) \in H^2(X;\mathbb R)  \cap H^{1,1}(X), $$
and we call $c_1(X)$ the {\em first Chern class} of the complex manifold $X^n$ (that is, of the holomorphic tangent bundle, which is the same as the first Chern class of the anti-canonical line bundle $K_X^{-1}:=\bigwedge^nTX$, {\itshape i.e.} the dual of the canonical bundle; of course, the definition will agree with any other definition of first Chern class you may have seen before).

\section{Calabi's conjectures}

\subsection{Statement of Calabi's conjectures and basic K\"ahler-Einstein}

We have seen that, for any compact K\"ahler manifold $(X^n,\omega)$, we have $\mathrm{Ric}(\omega) \in 2\pi c_1(X)$; then it is natural to ask what about the converse statement:

\begin{theorem}[first Calabi conjecture, proven by Yau \cite{yau-PNAS, yau-CPAM}]\label{thm:1st-calabi-conj}
Let $(X^n,\omega)$ be a compact K\"ahler manifold, and let $\rho\in 2\pi c_1(X) \in H^2(X;\mathbb R) \cap H^{1,1}(X)$. Then there exists a unique metric $\omega' \in [\omega]$ such that $\mathrm{Ric}(\omega')=\rho$.
\end{theorem}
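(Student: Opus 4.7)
The plan is to reduce the Ricci-prescribing problem to a complex Monge-Amp\`ere equation and then solve that scalar PDE by the continuity method, where the key difficulty will be establishing uniform a priori estimates.

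\textbf{Reduction to Monge-Amp\`ere.} Since $\rho$ and $\mathrm{Ric}(\omega)$ both represent $2\pi c_1(X)$, the $\sqrt{-1}\partial\overline\partial$-lemma provides $F\in\mathcal{C}^\infty(X;\mathbb R)$ with $\rho-\mathrm{Ric}(\omega)=-\sqrt{-1}\partial\overline\partial F$. Any candidate $\omega'\in[\omega]$ has the form $\omega_\varphi:=\omega+\sqrt{-1}\partial\overline\partial\varphi$ with $\omega_\varphi>0$, and by the formula for $\mathrm{Ric}(\omega')-\mathrm{Ric}(\omega)$ from the previous section, $\mathrm{Ric}(\omega_\varphi)=\rho$ is equivalent (up to an additive constant in $\varphi$) to
\[
(\omega+\sqrt{-1}\partial\overline\partial\varphi)^n = e^{F}\,\omega^n,
\]
where we normalize $F$ by $\int_X e^F\omega^n = \int_X \omega^n$ (necessary since both sides integrate to $\mathrm{Vol}_{[\omega]}$) and $\varphi$ by, say, $\int_X \varphi\,\omega^n=0$.

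\textbf{Uniqueness.} If $\omega_{\varphi_1}$ and $\omega_{\varphi_2}$ both solve the equation, then $\omega_{\varphi_1}^n=\omega_{\varphi_2}^n$. Setting $\psi:=\varphi_1-\varphi_2$ and using the algebraic identity $\omega_{\varphi_1}^n-\omega_{\varphi_2}^n = \sqrt{-1}\partial\overline\partial\psi \wedge T$ with $T:=\sum_{k=0}^{n-1}\omega_{\varphi_1}^k\wedge\omega_{\varphi_2}^{n-1-k}$ a positive $(n-1,n-1)$-form, one multiplies by $\psi$, integrates by parts, and obtains $\int_X \sqrt{-1}\partial\psi\wedge\overline\partial\psi\wedge T = 0$, which forces $d\psi\equiv 0$, hence $\varphi_1=\varphi_2$ after normalization, and in any case $\omega_{\varphi_1}=\omega_{\varphi_2}$.

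\textbf{Existence via continuity method.} For $t\in[0,1]$ consider
\[
(\mathrm{MA}_t)\qquad (\omega+\sqrt{-1}\partial\overline\partial\varphi_t)^n = e^{tF+c_t}\,\omega^n, \qquad \int_X \varphi_t\,\omega^n=0,
\]
with $c_t$ chosen so that the right hand side has total mass $\int_X \omega^n$. Let
\[
S := \bigl\{ t\in[0,1] : (\mathrm{MA}_t)\text{ has a smooth solution } \varphi_t \text{ with } \omega+\sqrt{-1}\partial\overline\partial\varphi_t>0 \bigr\}.
\]
Then $0\in S$ (take $\varphi_0\equiv 0$). For openness at $t_0\in S$, the linearization of the operator $\varphi\mapsto \log(\omega_\varphi^n/\omega^n)$ at $\varphi_{t_0}$ is $\tfrac{1}{2}\Delta_{\omega_{t_0}}$, which is an isomorphism from $\{u\in\mathcal C^{2,\alpha}: \int u\,\omega^n=0\}$ onto $\{v\in \mathcal C^{0,\alpha}:\int v\,\omega_{t_0}^n=0\}$; the implicit function theorem in Banach spaces then yields solutions for $t$ near $t_0$.

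\textbf{Closedness is the hard part.} One needs uniform $\mathcal{C}^{k,\alpha}$ bounds on $\varphi_t$, independent of $t$. The standard Yau hierarchy is:
\begin{enumerate}
\item[(i)] $\mathcal{C}^0$ bound: Moser iteration applied to powers of $\varphi_t$, using the Monge-Amp\`ere equation to compare $\|\varphi_t\|_{L^p}$ across $p$.
\item[(ii)] $\mathcal{C}^2$ bound: Yau's Laplacian estimate, obtained by computing $\Delta_{\omega_{\varphi_t}}\log\,\mathrm{tr}_\omega(\omega_{\varphi_t})$ and combining it with a suitable barrier $e^{-C\varphi_t}\,\mathrm{tr}_\omega(\omega_{\varphi_t})$; the bisectional curvature of the fixed background $\omega$ enters as a controlled lower-order term.
\item[(iii)] $\mathcal{C}^{2,\alpha}$ bound: either Calabi's third-order estimate on $|\nabla^3\varphi_t|$, or, more cleanly, the Evans-Krylov theorem applied to the concave operator $A\mapsto \log\det A$.
\item[(iv)] Higher regularity: once the equation is uniformly elliptic with $\mathcal{C}^{0,\alpha}$ coefficients, standard Schauder bootstrapping gives $\mathcal{C}^\infty$ bounds.
\end{enumerate}
With these uniform estimates, Arzel\`a-Ascoli extracts a smooth limit along any sequence $t_j\to t_\infty\in[0,1]$, placing $t_\infty$ in $S$; so $S$ is closed. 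Connectedness of $[0,1]$ then yields $1\in S$, producing the desired $\omega'$. The genuinely delicate analytic work is in steps (i) and (ii), and historically this is exactly the core of Yau's breakthrough.
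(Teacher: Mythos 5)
Your proposal is correct and follows essentially the same route as the paper: reduction via the $\sqrt{-1}\partial\overline\partial$-lemma to the complex Monge-Amp\`ere equation, uniqueness by the Calabi integration-by-parts argument, and existence by the continuity method with openness from the invertibility of the linearized (Laplacian-type) operator on normalized spaces and closedness from the Yau hierarchy of a priori estimates ($\mathcal{C}^0$ via Moser iteration, the Laplacian estimate, then Evans--Krylov/Schauder bootstrap). The only cosmetic difference is that you parametrize the path with the normalizing constants $c_t$ directly for the prescribed-Ricci equation, whereas the paper details the estimates along the path for the second conjecture and then indicates the modifications (chiefly the $\mathcal{C}^0$ estimate via Moser iteration, Poincar\'e, and the Green function) needed for the first.
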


In particular, when $c_1(X)=0$ (that is, when $X$ is \emph{Calabi-Yau}), the above theorem gives K\"ahler metric $\omega'$ with $\mathrm{Ric}(\omega')=0$. Historically, these were the first non-flat examples of compact Ricci flat manifolds to have been found.  More precisely, Yau's result implies:

\begin{theorem}[{Yau \cite{yau-PNAS, yau-CPAM}}]
Let $X^n$ be a compact K\"ahler Calabi-Yau manifold, {\itshape i.e.} $c_1(X)=0$. Then in any K\"ahler class there is a unique K\"ahler-Einstein metric with cosmological constant $\lambda=0$.
\end{theorem}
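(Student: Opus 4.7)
The plan is to deduce this statement as an immediate corollary of Theorem \ref{thm:1st-calabi-conj} applied to the trivial Ricci representative. First I would observe that since $c_1(X)=0$ by hypothesis, the zero real $(1,1)$-form is an element of the class $2\pi c_1(X) \in H^2(X;\mathbb{R})\cap H^{1,1}(X)$. Fixing an arbitrary K\"ahler class $[\omega]$ and invoking the first Calabi conjecture with $\rho = 0$ then produces a K\"ahler metric $\omega'\in[\omega]$ satisfying $\mathrm{Ric}(\omega')=0$. Reading this equation as $\mathrm{Ric}(\omega')=0\cdot\omega'$ delivers the K\"ahler-Einstein property with cosmological constant $\lambda = 0$, which supplies the existence half of the statement.

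Uniqueness of the K\"ahler-Einstein representative within the fixed class is automatic from the uniqueness clause of Theorem \ref{thm:1st-calabi-conj}: any two Ricci-flat K\"ahler metrics in $[\omega]$ solve the same prescribed-Ricci problem (with $\rho = 0$) and must therefore coincide. So no separate uniqueness argument is needed beyond what is already built into the Calabi conjecture.

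There is essentially no new obstacle in this deduction — all the hard analytic work, namely the resolution of the complex Monge--Amp\`ere equation underlying Yau's theorem, is already absorbed into Theorem \ref{thm:1st-calabi-conj}. The only point worth flagging is purely a matter of conventions: the $\lambda = 0$ K\"ahler-Einstein condition is literally the $\rho = 0$ instance of the prescribed Ricci problem, with no sign choice or normalization issue intervening, because the Ricci form $\mathrm{Ric}(\omega) = -\sqrt{-1}\partial\overline\partial\log\omega^n$ defined earlier in the excerpt is precisely what appears on both sides.
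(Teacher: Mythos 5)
Your deduction is correct and is exactly how the paper obtains this statement: since $c_1(X)=0$, one applies the first Calabi conjecture (Theorem \ref{thm:1st-calabi-conj}) with $\rho=0$ in the given K\"ahler class to produce a Ricci-flat metric, and uniqueness is precisely the uniqueness clause of that theorem, as any $\lambda=0$ K\"ahler-Einstein metric in the class solves the same prescribed-Ricci problem. No further argument is needed.
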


As regards the other K\"ahler-Einstein metrics, we first  observe the following important fact:
\begin{proposition}[{after \cite{kodaira-Embedding}}]\label{Alg}
Let $(X^n,\omega)$ be a compact K\"ahler-Einstein manifold with cosmological constant $\lambda \neq 0$. Then $X$ is projective algebraic (actually of a very special type, see next), {\itshape i.e.} it has an embedding $X \hookrightarrow \mathbb{CP}^N$.
\end{proposition}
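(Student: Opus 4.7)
My plan is to use Kodaira's embedding theorem, which states that a compact complex manifold admitting a positive holomorphic line bundle is projective algebraic. So the task reduces to producing such a line bundle from the K\"ahler-Einstein equation $\mathrm{Ric}(\omega)=\lambda\omega$ with $\lambda\neq 0$.

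The first step is to take cohomology classes on both sides of the Einstein equation. Since $[\mathrm{Ric}(\omega)]=2\pi c_1(X)$ by the definition of the first Chern class recalled above, the Einstein condition immediately yields
\[
[\omega] \;=\; \frac{2\pi}{\lambda}\, c_1(X) \quad\in\quad H^{1,1}(X;\mathbb{R}).
\]
In particular, $c_1(X)$ is, up to the positive multiple $\lambda/(2\pi)$, represented by either the K\"ahler form $\omega$ itself (if $\lambda>0$) or by $-\omega$ (if $\lambda<0$); in either case it is a rational class in $H^2(X;\mathbb{Q})$ carried by a definite $(1,1)$-form.

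The second step is to identify the relevant line bundle. Recall $c_1(K_X^{-1})=c_1(X)$ and $c_1(K_X)=-c_1(X)$, and that these are integral cohomology classes. If $\lambda>0$, then by the previous display $c_1(K_X^{-1})$ is represented by the positive form $\tfrac{\lambda}{2\pi}\omega$, hence $K_X^{-1}$ admits a Hermitian metric whose Chern curvature is positive definite, so $K_X^{-1}$ is a positive (ample) line bundle. If $\lambda<0$, the same reasoning applied with the sign reversed shows that $K_X$ is positive. Either way we have produced an ample holomorphic line bundle on $X$.

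The third and final step is to invoke Kodaira's embedding theorem \cite{kodaira-Embedding}: for $L$ a positive line bundle on a compact complex manifold $X$, the sections of $L^{\otimes k}$ for $k\gg 0$ furnish a holomorphic embedding $X\hookrightarrow\mathbb{CP}^{N_k}$. Applying this to $L=K_X^{-1}$ (resp.\ $L=K_X$) concludes the proof. I do not expect a serious obstacle here: the whole argument is really just the observation that the Einstein condition forces $\pm c_1(X)$ to be a K\"ahler class, and the hard analytic content (Kodaira's theorem) is being used as a black box. The parenthetical "very special type" in the statement alludes to the fact that in these two cases $X$ is embedded by (anti-)pluricanonical sections, which is exactly what the construction produces.
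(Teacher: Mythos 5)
Your proposal is correct and follows essentially the same route as the paper: the Einstein condition makes $\pm c_1(X)$ a K\"ahler class, so $K_X^{-1}$ (for $\lambda>0$) or $K_X$ (for $\lambda<0$) is positive, and Kodaira's embedding theorem does the rest. The paper phrases the positivity step slightly more directly --- the Ricci form $\mathrm{Ric}(\omega)=\lambda\omega$ is itself the Chern curvature of the Hermitian metric induced on $K_X^{-1}$ by $\omega$, so no passage through the $\sqrt{-1}\partial\overline\partial$-Lemma is needed --- but this is the same argument.
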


\begin{remark}
	\begin{itemize}
		\item This is not true for $\lambda=0$ ({\itshape e.g.} the generic flat complex torus in dimension bigger than one).
		\item The KE metric \emph{are not} given by the restriction of Fubini-Study metric.
	\end{itemize}

\end{remark}

The proof requires a little, but important, digression. First, we recall that for any holomorphic Hermitian line bundle $(L,h)$ on a complex manifold $X^n$ (obvious definitions), there exists a unique connection $\nabla^{Ch}_h$ on $L$ being compatible with the Hermitian structure and such that $\nabla^{0,1}=\overline\partial$: it is called the {\em Chern connection}, see {\itshape e.g.} \cite[Section 1.6]{gabor-book}, \cite[Section 1.7]{gauduchon-book}, \cite[Section V.12]{demailly-agbook}. More precisely, let $s$ be a local non-vanishing holomorphic section of $L$, and write $h(s)=|s|^2$, then the connection $1$-form is $\alpha=\frac{\partial h}{h}$. Thus its curvature is $\sqrt{-1}F_{\nabla^{Ch}}=-\sqrt{-1}d\alpha=-\sqrt{-1}\partial\overline\partial \log h$, and 
it represents, by definition, the class $2\pi  c_1(L)$ where $c_1(L)\in H^2(X;\mathbb R) \cap H^{1,1}(X)$ is the {\em first Chern class of $L$}.

Second, we say that the Hermitian line bundle $(L,h)$ over $X^n$ is {\em positive} if $\sqrt{-1}F_{\nabla^{Ch}}$ is a K\"ahler metric on $X^n$.
Note that this is equivalent to ask that there exists a K\"ahler metric $\omega \in 2\pi c_1(L)$ ({\itshape a priori} a weaker requirement). Indeed, take any Hermitian structure $\tilde h$ on $L$, then by the $\sqrt{-1}\partial\overline\partial$-Lemma there exists $\varphi$ such that $\omega-\sqrt{-1}F_{\nabla^{Ch}_{\tilde h}}=\sqrt{-1}\partial\overline\partial \varphi$, and then $\omega=\sqrt{-1}F_{\nabla^{Ch}_{h}}$ for $h:=\exp(-\varphi)\tilde h$. By the way, this argument precisely says that we can always solve the \emph{Hermitian-Einstein equation} for line bundles!

We also introduce another notion: a holomorphic line bundle $L$ over $X$ is called {\em ample} if there exists a holomorphic embedding $f \colon X \hookrightarrow \mathbb{CP}^N$ such that $L^{\otimes k}\simeq f^* \mathcal{O}_{\mathbb{CP}^N}(1)$ for some $k$. Here, $\mathcal{O}_{\mathbb{CP}^N}(1)$ is the dual of the tautological line bundle $\mathcal{O}_{\mathbb{CP}^N}(-1)$ of $\mathbb{CP}^N$ (the one that takes the line $\ell \in \mathbb C^{n+1}$ over the point $[\ell]\in \mathbb{CP}^n$). 

Next we  recall the following  very deep theorem by Kodaira that relates the two notions, see {\itshape e.g.} \cite[Theorem 7.11]{voisin-1}, \cite[Corollary VII.13.3]{demailly-agbook}:

\begin{theorem}[{Kodaira \cite{kodaira-Embedding}}]
A holomorphic line bundle over a compact complex manifold admits a positive Hermitian structure if and only if it is ample.
\end{theorem}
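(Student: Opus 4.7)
The plan is to treat the two implications separately; the forward direction (ample $\Rightarrow$ positive) is essentially a pullback argument, while the reverse direction is Kodaira's embedding theorem proper, which will require Kodaira vanishing on blow-ups.

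For the easy direction ($\Leftarrow$), assume $f: X \hookrightarrow \mathbb{CP}^N$ realises $L^{\otimes k} \cong f^*\mathcal{O}_{\mathbb{CP}^N}(1)$. The bundle $\mathcal{O}_{\mathbb{CP}^N}(1)$ carries the natural Hermitian metric $h_{FS}$ induced by duality from the standard metric on the tautological bundle, whose Chern curvature is a positive multiple of $\omega_{FS}$. Then $f^* h_{FS}$ is a Hermitian structure on $L^{\otimes k}$ whose Chern curvature equals $f^*\omega_{FS}$; since $f$ is an immersion, this pulled-back form is still positive on $X$. One then extracts a $k$-th root by choosing any Hermitian metric $h_0$ on $L$ and using the $\sqrt{-1}\partial\overline\partial$-Lemma to find $\varphi \in \mathcal{C}^\infty(X;\mathbb R)$ such that $h := e^{-\varphi}h_0$ has $\sqrt{-1}F_{\nabla^{Ch}_h} = \frac{1}{k} f^*\omega_{FS}$, which is positive.

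For the hard direction ($\Rightarrow$), assume $(L,h)$ is positive and set $\omega := \sqrt{-1}F_{\nabla^{Ch}_h}$. The plan is to use sections of $L^{\otimes k}$ for $k \gg 0$ to define a Kodaira map
\[ \phi_k : X \dashrightarrow \mathbb{P}(H^0(X,L^{\otimes k})^{\vee}), \qquad x \mapsto [s_0(x) : \cdots : s_{N_k}(x)], \]
and to verify that for $k$ sufficiently large: (a) $\phi_k$ is base-point-free, (b) $\phi_k$ separates any pair of distinct points $x \neq y$, and (c) $\phi_k$ separates tangent directions at every point. These three properties together imply that $\phi_k$ is a holomorphic embedding, with $L^{\otimes k} \cong \phi_k^* \mathcal{O}(1)$, hence $L$ is ample by definition.

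Each of (a)--(c) will be reduced to a cohomology vanishing statement. For instance, to prove (b), let $\pi: \tilde X \to X$ be the blow-up at the two points $x,y$, with exceptional divisors $E_x,E_y$; the surjectivity of the evaluation $H^0(X,L^{\otimes k}) \twoheadrightarrow L^{\otimes k}|_x \oplus L^{\otimes k}|_y$ follows from $H^1(\tilde X, \pi^*L^{\otimes k} \otimes \mathcal{O}(-E_x - E_y)) = 0$. Using the adjunction $K_{\tilde X} \cong \pi^* K_X \otimes \mathcal{O}((n-1)(E_x+E_y))$, this cohomology group equals $H^1(\tilde X, K_{\tilde X} \otimes A_k)$ with $A_k := \pi^*(L^{\otimes k} \otimes K_X^{-1}) \otimes \mathcal{O}(-n(E_x+E_y))$, and the Kodaira vanishing theorem applies provided $A_k$ is positive on $\tilde X$. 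Properties (a) and (c) follow by the same scheme, using blow-ups at a single point with exceptional divisors taken to higher multiplicity.

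The main technical obstacle will therefore be verifying the positivity of the twisted bundles $A_k$ on the blow-up: the pullback $\pi^*\omega$ is only semi-positive (degenerate along the exceptional divisors), so one must produce a small Hermitian correction along $E_x,E_y$ that is dominated, for $k$ large depending only on $(X,\omega)$ and not on the chosen points, by the positivity of $k\pi^*\omega$. This local-to-global positivity argument, together with the Bochner--Kodaira--Nakano identity underlying Kodaira vanishing, is the core of the proof; once it is in place, the sheaf-theoretic reductions and the passage from $\phi_k$ being an injective immersion to being an embedding are formal since $X$ is compact.
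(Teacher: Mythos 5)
Your outline is correct, but it takes a genuinely different route from the one the paper has in mind. The paper does not prove this theorem: it only remarks that the direction ample $\Rightarrow$ positive is immediate by restricting the Fubini--Study metric (your pullback-plus-$k$-th-root step; note you do not even need the $\sqrt{-1}\partial\overline\partial$-Lemma there, since the pulled-back metric on $L^{\otimes k}$ can be written as $h_0^{\otimes k}e^{-\psi}$ for a globally defined function $\psi$, and $h:=h_0e^{-\psi/k}$ already has curvature $\frac1k f^*\omega_{FS}>0$), and it defers the hard direction to the ``peak sections'' technique sketched later in the Donaldson--Sun section: for $k$ bounded in terms of the geometry one builds a holomorphic section of $L^{\otimes k}$ peaked at any given point by cutting off a local model section and correcting it by solving $\overline\partial\tau=\overline\partial\sigma$ with H\"ormander-type $L^2$ estimates, the invertibility of the relevant Laplacian coming from the Bochner formula and the positivity of the curvature. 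Your route is instead the classical sheaf-theoretic one (Griffiths--Harris, Voisin, Demailly): reduce base-point freeness, separation of points and separation of tangents to $H^1$-vanishing on blow-ups, use the adjunction $K_{\tilde X}=\pi^*K_X\otimes\mathcal{O}((n-1)E)$, and invoke Kodaira vanishing, the technical core being the uniform (in the chosen centers) positivity of the twisted bundles $A_k$ on the blow-up, which you correctly isolate; one should also record the comparison of $H^0$ and $H^1$ between $\tilde X$ and $X$ (via $R^i\pi_*$ or a direct argument identifying sections vanishing along $E$ with sections vanishing at the point) and the compactness argument promoting the injective immersion to an embedding, both of which you mention. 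Each approach is standard and can be completed: yours is the shortest path to the qualitative statement, while the $L^2$/peak-section method the paper alludes to is quantitative (a uniform $k_0$ depending only on dimension and geometric bounds), which is precisely the feature exploited later in the partial $\mathcal{C}^0$-estimate of Donaldson--Sun.
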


Of course one direction is trivial: just take the restriction of Fubini-Study metric. The other requires considerably more work. We will describe some of the idea of the proof in subsequent sections using the "pick sections technique". 

\begin{proof}[Proof of Proposition \ref{Alg}]
If we have a KE metric with positive cosmological section $\mathrm{Ric}(\omega)=\lambda \omega >0$, hence, by the discussion at the end of the last lecture the anticanonical bundle is positive. Thus, by Kodaira's theorem, $X$ can be embedded in some projective space (hence it is algebraic, {\itshape i.e.} it is cut  by homogeneous polynomial by Chow's theorem, {\itshape e.g.}  \cite{griffiths-harris}). The case of negative cosmological constant goes along the same lines.
\end{proof}

We say that a compact complex manifold is \emph{Fano} if $c_1(X)>0$, {\itshape i.e.} if the first Chern class can be represented by a K\"ahler form. This is equivalent to say that the anticanonical line bundle is ample, and it is thus a "trivial" necessary condition for a complex manifold to admit KE metric of positive scalar curvature. Note that as an immediate consequence of the first Calabi conjecture, Fano manifolds are precisely the compact complex manifolds admitting K\"ahler metrics of positive (non necessary costant!) Ricci curvature.  If $c_1(X)<0$ (equivalenty, the canonical bundle is ample) the manifold is, in particular, of \emph{general type} ({\itshape i.e.} of top Kodaira dimension, see {\itshape e.g.} \cite{voisin-1}).  Again this condition is necessary for KE metrics with negative cosmological constant.

Fano manifolds are definitely fewer that manifolds with negative first Chern class. In particular, there are only finitely many diffeomorphism classes of  Fano manifolds for fixed dimension \cite{kmm}, see {\itshape e.g.} \cite{kollar}. The next proposition  provides some simple examples Fanos, Calabi-Yaus, and algebraic manifolds with ample canonical bundle.

\begin{proposition}\label{prop:algebraic-hypersurf}
Let $X_d=\{F_d=0\} \subseteq \mathbb{CP}^n$ be a smooth hypersurface of degree $d$. Then:
\begin{itemize}
\item if $d<n+1$, then $X_d$ is Fano;
\item if $d=n+1$, then $X_d$ is Calabi-Yau;
\item if $d>n+1$, then $X_d$ is with ample canonical bundle.
\end{itemize}
\end{proposition}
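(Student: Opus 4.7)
The plan is to reduce everything to a single computation of the canonical bundle $K_{X_d}$ via the adjunction formula. Recall that for a smooth hypersurface $Y$ in a complex manifold $Z$, one has the short exact sequence of holomorphic vector bundles
$$0 \to TY \to (TZ)|_Y \to N_{Y/Z} \to 0,$$
which implies, by taking top exterior powers and dualizing, the \emph{adjunction formula}
$$K_Y \cong \bigl( K_Z \otimes \det N_{Y/Z} \bigr)\big|_Y.$$
I would first specialize to $Z = \mathbb{CP}^n$ and $Y = X_d$. Since $X_d$ is cut out by a section of $\mathcal{O}_{\mathbb{CP}^n}(d)$, its normal bundle is $N_{X_d/\mathbb{CP}^n} \cong \mathcal{O}_{\mathbb{CP}^n}(d)|_{X_d}$.

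Next I would recall the standard computation $K_{\mathbb{CP}^n} \cong \mathcal{O}_{\mathbb{CP}^n}(-n-1)$; the cleanest route is through the Euler exact sequence
$$0 \to \mathcal{O}_{\mathbb{CP}^n} \to \mathcal{O}_{\mathbb{CP}^n}(1)^{\oplus (n+1)} \to T\mathbb{CP}^n \to 0,$$
taking determinants gives $\det T\mathbb{CP}^n \cong \mathcal{O}_{\mathbb{CP}^n}(n+1)$, hence $K_{\mathbb{CP}^n} \cong \mathcal{O}_{\mathbb{CP}^n}(-n-1)$. Plugging into adjunction yields the single clean formula
$$K_{X_d} \cong \mathcal{O}_{\mathbb{CP}^n}(d-n-1)\big|_{X_d}.$$

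With this in hand, the three cases are immediate. If $d = n+1$, then $K_{X_d}$ is holomorphically trivial, so in particular $c_1(X_d) = 0$ and $X_d$ is Calabi-Yau. If $d > n+1$, then $K_{X_d}$ is the restriction of the ample bundle $\mathcal{O}_{\mathbb{CP}^n}(d-n-1)$; restricting a positively curved Hermitian metric (e.g.\ the one coming from Fubini--Study on $\mathcal{O}(1)$ to a suitable tensor power) to a complex submanifold preserves positivity of curvature (the restricted $(1,1)$-form stays positive on $TX_d \subset T\mathbb{CP}^n|_{X_d}$), so $K_{X_d}$ admits a positive Hermitian metric; by Kodaira's embedding theorem recalled just above, $K_{X_d}$ is ample, i.e.\ $c_1(X_d) < 0$. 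If $d < n+1$, the same argument applied to the dual bundle $K_{X_d}^{-1} \cong \mathcal{O}_{\mathbb{CP}^n}(n+1-d)|_{X_d}$ shows that the anticanonical bundle is ample, i.e.\ $c_1(X_d) > 0$, and $X_d$ is Fano.

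There is no real obstacle: the only non-routine input is the adjunction formula and Kodaira's theorem (already cited in the excerpt). The mildly delicate point worth highlighting is smoothness of $X_d$, which is needed for adjunction to give a genuine line bundle $K_{X_d}$ rather than a dualizing sheaf; this is of course a hypothesis of the proposition, and by a standard Bertini/transversality argument (generic $F_d$ gives a smooth $X_d$) such hypersurfaces exist in abundance for every $d$ and $n$, showing the three classes of manifolds are non-empty.
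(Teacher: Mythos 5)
Your proof is correct and follows essentially the same route as the paper: adjunction via the normal bundle exact sequence, the identification $K_{\mathbb{CP}^n}\simeq\mathcal{O}(-n-1)$, and the conclusion $K_{X_d}\simeq\mathcal{O}(d-n-1)\lfloor_{X_d}$. The only cosmetic differences are that you compute $K_{\mathbb{CP}^n}$ via the Euler sequence rather than the pole order of the holomorphic volume form, and you spell out the ampleness-of-restriction step that the paper leaves implicit in ``The statement follows.''
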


\begin{proof}

We sketch here some ideas for the proof. See {\itshape e.g.} \cite{griffiths-harris} for more details on the definitions.

For $Y \subseteq X$ a complex submanifold of codimension $1$, we have the short exact sequence of holomorphic vector bundles
$$ 0 \to TY \to TX \lfloor_{Y} \to N_{Y} \to 0 , $$
and its dual, whence the adjunction formula
$$ K_Y^{-1} = K_X^{-1}\lfloor_Y \otimes N_{Y} , $$
where
$ N_{Y}^{\vee} = \mathcal{O}(Y)\lfloor_Y. $ Here, given locally $Y \cap U_\alpha = \{f_\alpha=0\}$, then $\mathcal{O}(Y)$ is the line bundle with transition functions $g_{\alpha\beta}:=\frac{f_\alpha}{f_\beta}$. Since $df_\alpha=g_{\alpha\beta}df_\beta$, we have a non-vanishing global section of $N_{Y}^\vee\otimes\mathcal{O}(Y)$, that is, $N_{Y}^\vee\otimes\mathcal{O}(Y)\simeq\mathcal{O}$ the trivial line bundle.

In our case, we recall that the space of isomorphisms classes of holomorphic line bundles on $\mathbb{CP}^n$ is $\mathrm{Pic}(\mathbb{CP}^n)=\langle \mathcal{O}(1) \rangle \simeq \mathbb{Z}$. We have $K_{\mathbb{CP}^n}\simeq \mathcal{O}(-1)^{\otimes (n+1)}$ ({\itshape e.g.} since the standard holomorphic volume form on $\C^n$ has a pole of order $n+1$ at infinity). Since an hypersurface of degree $d$ is given by a section of $\mathcal{O}(d)$, 
$$ K_{X_d}^{-1} \simeq \mathcal{O}(n+1-d)\lfloor_{X_d} . $$
The statement follows.
\end{proof}

It is worth noting that by choosing different homogeneous polynomials in the above proposition we have different (non-biholomorphic) complex manifolds. For fixed degree $d$ they are all diffeomorphic. The rough "count" of the number of complex moduli parameters considering the $\mathrm{PGL}(n)$ reparametrization is  equal to $\binom{n+d}{d}-(n+1)^2$. Clearly, from the point of view of studying existence of KE metrics on them, such complex manifolds are all to be considered different.

In our search for KE metrics, the next theorem is crucial:

\begin{theorem}[{second Calabi conjecture, proven by Aubin \cite{aubin-CRAS, aubin-BSM}, Yau \cite{yau-PNAS, yau-CPAM}}]\label{thm:2nd-calabi-conj}
Let $X^n$ be a compact complex manifold with ample canonical bundle. Then there exists a unique K\"ahler metric $\omega\in 2\pi c_1(X)$ such that $\mathrm{Ric}(\omega)=-\omega$.
\end{theorem}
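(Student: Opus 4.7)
The plan is to reduce the tensorial equation $\mathrm{Ric}(\omega)=-\omega$ to a scalar complex Monge--Amp\`ere equation and to solve the latter by the continuity method; uniqueness will come from a direct maximum-principle argument. Note first that $[\omega]$ must equal $-2\pi c_1(X)=2\pi c_1(K_X)$, which is a K\"ahler class by the ampleness of $K_X$ and Kodaira's theorem. Fix a background metric $\omega_0$ in this class. Since $\mathrm{Ric}(\omega_0)+\omega_0$ is $d$-exact (both summands represent classes summing to zero), the $\sqrt{-1}\partial\overline\partial$-Lemma yields $F\in\mathcal{C}^\infty(X;\mathbb{R})$ with $\mathrm{Ric}(\omega_0)+\omega_0=\sqrt{-1}\partial\overline\partial F$. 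Writing any candidate as $\omega_\varphi=\omega_0+\sqrt{-1}\partial\overline\partial\varphi>0$ and using Proposition \ref{prop:ricci-kahler}, the equation $\mathrm{Ric}(\omega_\varphi)=-\omega_\varphi$ reduces, after absorbing a constant into $F$, to
\[
(\omega_0+\sqrt{-1}\partial\overline\partial\varphi)^n=e^{\varphi+F}\omega_0^n. \qquad (\ast)
\]

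For \emph{uniqueness}, if $\varphi_1$ and $\varphi_2$ both solve $(\ast)$, set $\psi:=\varphi_2-\varphi_1$, so that $\omega_{\varphi_2}^n=e^{\psi}\omega_{\varphi_1}^n$. At a maximum of $\psi$ one has $\sqrt{-1}\partial\overline\partial\psi\le 0$, hence $\omega_{\varphi_2}\le\omega_{\varphi_1}$ as Hermitian matrices, so $e^{\psi_{\max}}\le 1$ and $\psi_{\max}\le 0$. The symmetric estimate at a minimum gives $\psi_{\min}\ge 0$, forcing $\psi\equiv 0$. For \emph{existence}, one runs the continuity method on the family
\[
(\omega_0+\sqrt{-1}\partial\overline\partial\varphi_t)^n=e^{\varphi_t+tF}\omega_0^n,\qquad t\in[0,1],
\]
with the trivial solution $\varphi_0\equiv 0$ at $t=0$ and $(\ast)$ recovered at $t=1$. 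Openness of the solvable set is a direct application of the implicit function theorem: the linearization at a solution $\varphi_{t_0}$ is $\Delta_{\omega_{\varphi_{t_0}}}-\mathrm{id}$, whose spectrum lies in $(-\infty,-1]$ (zero is an eigenvalue of the K\"ahler Laplacian only on constants), hence it is an isomorphism between appropriate H\"older spaces.

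The heart of the argument is closedness, which requires uniform a priori estimates on $\varphi_t$. The $\mathcal{C}^0$ bound is immediate: the same maximum-principle argument used for uniqueness, applied to $(\ast_t)$, gives $\|\varphi_t\|_{\mathcal{C}^0}\le\|F\|_{\mathcal{C}^0}$. The $\mathcal{C}^2$ estimate amounts to controlling the quantity $n+\Delta_{\omega_0}\varphi_t=\mathrm{tr}_{\omega_0}\omega_{\varphi_t}$; one computes $\Delta_{\omega_{\varphi_t}}\log\mathrm{tr}_{\omega_0}\omega_{\varphi_t}$ using the Monge--Amp\`ere equation and a lower bound on the bisectional curvature of $\omega_0$ (the Aubin--Yau--Chern--Lu inequality), and applies the maximum principle to an auxiliary quantity of the form $\log\mathrm{tr}_{\omega_0}\omega_{\varphi_t}-A\varphi_t$ for $A$ sufficiently large; this yields a uniform pointwise bound. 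From $\mathcal{C}^2$ one passes to $\mathcal{C}^{2,\alpha}$ via the Evans--Krylov theorem for concave uniformly elliptic equations, and then bootstraps to $\mathcal{C}^\infty$ by standard Schauder estimates on successively differentiated equations. The technically most delicate step is the $\mathcal{C}^2$ estimate. What makes the negative case genuinely easier than the Calabi--Yau case is precisely the presence of the term $+\varphi_t$ in the exponent of $(\ast_t)$: it supplies the maximum-principle coercivity that renders both the $\mathcal{C}^0$ estimate and the uniqueness argument trivial, so one entirely bypasses the subtle Moser iteration that Yau had to invent for Theorem \ref{thm:1st-calabi-conj}.
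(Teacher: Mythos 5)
Your proposal is correct and follows essentially the same route as the paper: reduction via the $\sqrt{-1}\partial\overline\partial$-Lemma to the Monge--Amp\`ere equation \eqref{eq:2nd-calabi-conj}, uniqueness and the $\mathcal{C}^0$ bound by the maximum principle, the continuity path $\omega_{\varphi_t}^n=\exp(tf+\varphi_t)\,\omega^n$ with openness from the invertibility of $\Delta_{\omega_{\varphi_t}}-\mathrm{id}$, and closedness from the Chern--Lu-type Laplacian estimate applied to $\log\mathrm{tr}_{\omega}\omega_{\varphi_t}-A\varphi_t$. The only (inessential) divergence is that you pass from $\mathcal{C}^2$ to $\mathcal{C}^{2,\alpha}$ via Evans--Krylov, whereas the paper sketches the third-order (Christoffel-symbol) estimate after Phong--Sesum--Sturm, an alternative the paper itself also mentions.
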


Thus, thanks to Theorems \ref{thm:1st-calabi-conj} and \ref{thm:2nd-calabi-conj},  Proposition \ref{prop:algebraic-hypersurf} gives many  examples of (K\"ahler)-Einstein metrics: \emph{any} hypersurface of degree $d\geq n+1$ in $\mathbb{CP}^n$ is  K\"ahler-Einstein.

By the way, as regards hypersurfaces in the Fano case, the problem is still open! It is conjectured they all do. 

\begin{remark} 
It is known that KE metrics exists on  hypersurfaces of Fermat type $\sum_i z_i^d=0$ ({\itshape e.g.} \cite{tian-book}).  The case of hypersurfaces for  $n=3$, $d=3$ was proved  in \cite{tian-Invent90}. Donaldson  (also Odaka independently)  showed that the set of K\"ahler-Einstein manifolds in the space of Fano manifolds is Zariski open under the assumption that the group of holomorphic automorphisms of each manifold is discrete ({\itshape e.g.} \cite{donaldson-Zariski}), hypothesis which holds for hypersurfaces.   Fujita  recently proved  existence of KE metrics on all smooth hypersurfaces of degree $d=n$ in $\mathbb{CP}^n$ \cite{fujita-arxiv}. For $n=4$ the conjecture follows by (very recent!) works \cite{spotti-sun-arxiv1705, liu-xu-arxiv1706}. Thus for $3\leq d \leq n-1$  and $n\geq5$ the existence problem of KE metrics on all smooth hypersurfaces is open.
\end{remark}

\subsection{Reduction of Calabi's conjectures to certain PDE's}

In this section, we reduce first and second Calabi's conjectures, Theorems \ref{thm:1st-calabi-conj} and \ref{thm:2nd-calabi-conj}, to a PDE, whose solutions are going to be studied in the following sections.

Consider the first Calabi's conjecture, Theorem \ref{thm:1st-calabi-conj}, that is, the problem of representing a chosen form $\rho$ in $2\pi c_1(X)$ as the Ricci form of a metric in $[\omega]$, where $\omega$ is the fixed background metric. Thanks to the $\sqrt{-1}\partial\overline\partial$-Lemma, we are reduced to look for $\varphi\in\mathcal{C}^\infty(X;\mathbb R)$ such that $\omega_\varphi:=\omega+\sqrt{-1}\partial\overline\partial\varphi>0$ has $\mathrm{Ric}({\omega_\varphi})=\rho$. Again by the $\sqrt{-1}\partial\overline\partial$-Lemma, we have $f\in\mathcal{C}^\infty(X;\mathbb R)$ such that $\mathrm{Ric}(\omega)-\rho=\sqrt{-1}\partial\overline\partial f$; we can normalize the Ricci potential $f$ in such a way that $\int_X \exp f \omega^n = \int_X \omega^n$. By the expression for the Ricci form of a K\"ahler metric, Proposition \ref{prop:ricci-kahler}, we are reduced to solve $\sqrt{-1}\partial\overline\partial\log \frac{\omega_\varphi^n}{\omega^n} = \sqrt{-1}\partial\overline\partial f$. When $X$ is compact, this reduces to
\begin{equation}\label{eq:1st-calabi-conj}
\omega_\varphi^n =\exp f \omega^n .
\end{equation}
In other words, we are prescribing the volume form.
In coordinates, this can be written as
$$ \det ( g_{i\bar j}+\partial^2_{z_i\bar z_j}\varphi ) = \exp f \det g_{i\bar j} , $$
which is a {\em complex Monge-Ampère equation} a very much studied second oreder non-linear PDE.

\begin{remark}
 As regards deep study of \emph{complex} Monge-Ampère equations, see {\itshape e.g.} \cite{aubin-book, guedj-book, guedj-zeriahi}). As regards \emph{real} Monge-Ampère equations, see {\itshape e.g.} \cite{gilbarg-trudinger, caffarelli-cabre, figalli-book} and the references therein.
\end{remark}

Similarly, the problem of finding K\"ahler-Einstein metrics with cosmological constant $\lambda \in \mathbb R$ reduces to the equation
\begin{equation}\label{eq:eq-ke}
\omega_\varphi^n = \exp(f-\lambda \varphi) \omega^n .
\end{equation}
In particular, the second Calabi's conjecture, Theorem \ref{thm:2nd-calabi-conj}, corresponds to $\lambda=-1$, up to homothety:
\begin{equation}\label{eq:2nd-calabi-conj}
\omega_\varphi^n = \exp(f+\varphi) \omega^n .
\end{equation}

\subsection{Uniqueness of solutions for Calabi's conjectures}
In this section, we prove uniqueness of solutions for equations \eqref{eq:1st-calabi-conj} and \eqref{eq:2nd-calabi-conj}.

Consider first the equation \eqref{eq:2nd-calabi-conj} for the second Calabi's conjecture. In this case the proof of uniqueness follows easily by the maximum principle. Assume that $\omega$ and $\omega_\varphi$ are two different K\"ahler-Einstein metrics. We can assume $f=0$ in \eqref{eq:2nd-calabi-conj}, so we have $\omega_\varphi^n=\exp\varphi\omega^n$. Let $p_{\text{max}}$ be a maximum point for $\varphi$. Then $\partial^2_{z_i\bar z_j}\varphi(p_{\text{max}})\leq 0$. By using $\omega_\varphi^n=\exp\varphi\omega^n$, we get that $\exp\varphi(p_{\text{max}})\leq 1$, then $\varphi(x)\leq \varphi(p_{\text{max}})\leq 0$ for any $x$. The same argument for a minimum point $q_{\text{min}}$ gives $\varphi(x)\geq 0$ for any $x$. Thus $\varphi=0$.

Uniqueness, up to additive constants, of solutions for first Calabi's conjecture \eqref{eq:1st-calabi-conj} has been proven by Calabi \cite{calabi-57}. The proof now is more subtle, and it needs some non-pointwise estimates. Assume that $\omega_\varphi=\omega+\sqrt{-1}\partial\overline\partial\varphi$ and $\omega_\psi=\omega+\sqrt{-1}\partial\overline\partial\psi$ are two different metrics in the same K\"ahler class $[\omega]$ with the same volume form $\omega_\varphi^n=\omega^n=\omega_\psi^n$. Consider the {\em Aubin functional} \cite{aubin-JFA} as a generalization of the classical Dirichlet energy (see also {\itshape e.g.} \cite[Section 4.2]{gauduchon-book}) and \cite[page 46]{tian-book}):
$$ I_{\omega}(\varphi) := \frac{1}{n! \mathrm{Vol}_\omega} \int_X  \varphi (\omega^n - \omega_\varphi^n) . $$
We have:
\begin{eqnarray*}
0 &=& \int_X (\varphi-\psi) (\omega^n_\psi-\omega_\varphi^n) = -\sqrt{-1} \int_X (\varphi-\psi) \partial\overline\partial(\varphi-\psi) \wedge \sum_{k=0}^{n-1} \omega_\varphi^k \wedge \omega_\psi^{n-k-1} \\
&=& \sqrt{-1} \int_X \partial(\varphi-\psi) \wedge \overline\partial(\varphi-\psi) \wedge \omega_\varphi^{n-1} \\
&& + \sqrt{-1} \sum_{k=0}^{n-2} \int_X \partial(\varphi-\psi) \wedge \overline\partial(\varphi-\psi) \wedge \omega_\varphi^k\wedge\omega_\psi^{n-k} \\
&\geq& \sqrt{-1} \int_X \partial(\varphi-\psi) \wedge \overline\partial(\varphi-\psi) \wedge \omega_\varphi^{n-1} \\
&=& \frac{1}{n} \int_X |\partial(\varphi-\psi)|^2_{\omega_\varphi} \omega_\varphi^n \geq 0 ,
\end{eqnarray*}
yielding $\partial(\varphi-\psi)=0$. Since $\varphi-\psi$ is real, this gives that $\varphi-\psi$ is constant and so $\omega_\varphi=\omega_\psi$.

As a consequence, we get:
\begin{proposition}
Let $X^n$ be a compact complex manifold with ample canonical bundle. Then any $F\in\mathrm{Aut}(X)$ is an isometry for the K\"ahler-Einstein metric.
\end{proposition}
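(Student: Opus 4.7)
The plan is to deduce this from the uniqueness part of Theorem~\ref{thm:2nd-calabi-conj}: since the K\"ahler-Einstein metric is unique, every biholomorphism must preserve it. Concretely, I would compare $\omega$ with its pullback $F^*\omega$ and verify that the pullback is again a K\"ahler-Einstein metric in the same class, with the same cosmological constant.

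First, I would check that $F^*\omega$ is K\"ahler. Since $F$ is a biholomorphism, it preserves the complex structure, so $F^*\omega$ is a closed real $(1,1)$-form; and positivity of $F^*\omega$ at a point $p$ reduces to positivity of $\omega$ at $F(p)$ applied to $dF(\cdot)$, which is an isomorphism of the relevant $(1,0)$-tangent spaces. Hence $F^*\omega$ is a K\"ahler metric.

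Second, I would verify that $[F^*\omega]=[\omega]$ in $H^{1,1}(X;\mathbb{R})$. Because $X$ is compact with $K_X$ ample, the K\"ahler-Einstein class is the class of $K_X$ (up to the factor $2\pi$). Any biholomorphism yields a canonical identification $F^*K_X\cong K_X$ by pulling back holomorphic $n$-forms, so $F^*c_1(K_X)=c_1(F^*K_X)=c_1(K_X)$. Thus $F^*[\omega]=[\omega]$.

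Third, I would invoke naturality of the Ricci form. From $\mathrm{Ric}(\omega)\stackrel{\text{loc}}{=}-\sqrt{-1}\partial\overline\partial\log\omega^n$ and the fact that $F$ preserves $\partial$, $\overline\partial$, and pulls volume forms to volume forms of $F^*\omega$, one gets $\mathrm{Ric}(F^*\omega)=F^*\mathrm{Ric}(\omega)=-F^*\omega$. Hence $F^*\omega$ is a K\"ahler-Einstein metric in $[\omega]$ with $\lambda=-1$. By the uniqueness clause of Theorem~\ref{thm:2nd-calabi-conj}, $F^*\omega=\omega$, i.e. $F$ is an isometry of the underlying Riemannian metric $g=\omega(\,\cdot\,,J\,\cdot\,)$.

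There is no real obstacle here; the only step that might look nontrivial is the equality of classes $[F^*\omega]=[\omega]$, but this is automatic from the intrinsic (functorial) nature of $K_X$ under biholomorphisms. The whole statement is essentially an immediate corollary of the uniqueness proved in the previous paragraph via the maximum principle.
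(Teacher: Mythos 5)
Your argument is correct and is exactly the paper's intended route: the paper states the proposition as an immediate consequence of the uniqueness just proved, and your write-up simply supplies the standard details (naturality of the Ricci form under biholomorphisms, invariance of the class $2\pi c_1(K_X)$, then uniqueness forces $F^*\omega=\omega$). Nothing is missing; note only that the equality $[F^*\omega]=[\omega]$ is even automatic once $\mathrm{Ric}(F^*\omega)=-F^*\omega$, since any such metric lies in $2\pi c_1(K_X)$.
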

That is, K\"ahler-Einstein metrics are {\em canonical}: namely, the symmetries for the metric are the holomorphic symmetries.
Recall that, in such a case, the group of isometries is finite, by Bochner formula, see {\itshape e.g.} \cite[Theorem 1.84]{besse} and discussions in the next section.

\subsection{Existence of solutions for second Calabi's conjecture}

We prove existence of a smooth solution for equation \eqref{eq:2nd-calabi-conj} by applying the {\em continuity method}. We consider the family of equations
\begin{equation}\label{eq:2nd-continuity}\tag{$\star_t$}
\omega_\varphi^n = \exp(t f +\varphi) \omega^n
\end{equation}
varying $t\in[0,1]$. Set
$$ E := \left\{ t \in [0,1] \;:\; \text{there exists } \varphi_s \in\mathcal{C}^\infty(X;\mathbb R) \text{ solution of \eqref{eq:2nd-continuity}} \mbox{ for all }   0\leq s\leq t \right\} . $$
We prove that:
\begin{itemize}
\item $E\neq\varnothing$ (this is straightforward because $\varphi=0$ is a solution for \eqref{eq:2nd-continuity} when $t=0$);
\item $E$ is open (this will follow by an implicit function argument);
\item $E$ is closed (this will follow by {\itshape a priori} estimates on the solutions).
\end{itemize}
In this case, since $[0,1]$ is connected, we get $E=[0,1]$, and in particular $1\in E$, assuring that we have a solution for \eqref{eq:2nd-continuity} for $t=1$, which is exactly \eqref{eq:2nd-calabi-conj}.

We prove {\em openness}.\label{page:open-2nd-calabi}
We can define the H\"older spaces $\mathcal{C}^{k,\alpha} (X)$ (on a compact manifold just take the usual definition in charts; one can also use a fixed background metric and compute the H\"older differences via parallel transport. All these possible constructions give the same spaces with equivalent norms). 
Consider
$$ \mathcal{E} \colon \mathcal{C}^{k,\alpha}(X) \times [0,1] \to \mathcal{C}^{k-2, \alpha}(X), \qquad (\varphi,t) \mapsto \log\frac{\omega_\varphi^n}{\omega^n} -\varphi-tf , $$
We can then rewrite: $\varphi$ is a solution of \eqref{eq:2nd-continuity} if and only if $\mathcal{E}(\varphi,t)=0$.
We compute
\begin{eqnarray*}
D_{\varphi} \mathcal{E} (\varphi,t) (\psi) &:=& \frac{d}{ds}\lfloor_{s=0} \mathcal{E}(\varphi+s\psi,t) \\
&=& \mathrm{tr}_{\omega_\varphi}\sqrt{-1}\partial\overline\partial\psi-\psi=-\Delta_{\overline\partial}\psi-\psi .
\end{eqnarray*}
Since $-\Delta_{\overline\partial}$ has non-positive eigenvalues, then the kernel of $D_{\varphi} \mathcal{E} (\varphi,t)$ is zero. Being self-adjoint,, the set of $\mathcal{C}^{k,\alpha}_{\omega}$-solutions is open by implicit function theorem.
To prove smoothness of the solution, we can start from $k=2$ using the Evans-Krylov theory \cite{evans, krylov}, or from $k=3$ using the Schauder theory (see {\itshape e.g.} \cite[Chapter 6]{gilbarg-trudinger}). The idea of the Schauder theory is the following. Consider $\varphi_t$ a $\mathcal{C}^{3,\alpha}$-regular solution of $\log\det (g_{j\bar k}+\partial^2_{z_j \bar z_k}\varphi_t)_{j,k}-\log\det (g_{j\bar k})_{j,k}-\varphi_t-tf=0$. By differentiating the equation in $z_\rho$, we get
$$ g_{\varphi_t}^{j\bar k} \left( \partial_{z_\rho} g_{j\bar k} + \partial^3_{z_\rho z_j \bar z_k} \varphi_t \right) - \partial_{z_\rho} \log\det (g_{j\bar k})_{j,k} - \partial_{z_\rho} \varphi_t - t \partial_{z_\rho} f = 0 . $$
So locally $\partial_{z_\rho}\varphi_t$ is a $\mathcal{C}^2$-regular solution of
$$ L\partial_{z_\rho}\varphi_t = h $$
where $L:=\Delta_{\omega_{\varphi_t}}-1$  (here and later $\Delta_{\omega}:=-\frac{1}{2} \Delta_d$, as analyst would prefer) is a second-order linear elliptic operator with $\mathcal{C}^{1,\alpha}$-regular coefficients, and $h:=-g_{\varphi_t}^{j\bar k}\partial_{z_\rho} g_{j\bar k} + \partial_{z_\rho} \log\det (g_{j\bar k})_{j,k} + t \partial_{z_\rho} f$ is $\mathcal{C}^{1,\alpha}$-regular. By Schauder estimates, see {\itshape e.g.} \cite[Theorem 6.19]{gilbarg-trudinger}, $d\varphi$ is $\mathcal{C}^{3,\alpha}$-regular, then $\varphi$ is $\mathcal{C}^{4,\alpha}$-regular. By bootstrap, we get that $\varphi$ is $\mathcal{C}^{k,\alpha}$-regular for any $k$ and $\alpha$, so $\varphi$ is smooth.

We prove now {\em closedness}. (See also {\itshape e.g.} \cite[Chapter 5]{tian-book}, \cite[Chapter 3]{gabor-book}.)
More precisely, let $(\varphi_{t_j})_j$ be a sequence where $\varphi_{t_j}$ is a solution of \eqref{eq:2nd-continuity} for $t=t_j$. We want to prove that $\varphi_{t_j} \to \varphi_{\bar t}$ as $t_j\to \bar t$, and that $\varphi_{\bar t}$ is a solution of \eqref{eq:2nd-continuity} for $t=\bar t$.
This will follow by the following {\itshape a priori} estimates on the solutions of \eqref{eq:2nd-continuity}:
\begin{enumerate}[(i)]
\item uniform $\mathcal{C}^0$-estimate;
\item uniform Laplacian estimate;
\item uniform higher-order estimate.
\end{enumerate}
Indeed, if we have $\|\varphi_{t_j}\|_{\mathcal{C}^{3,\alpha}\omega(X)}<C$, then by the Ascoli-Arzelà theorem, see {\itshape e.g.} \cite[3.15]{aubin-book}, we get that $\varphi_{t_j} \to \varphi_{\bar t} \in \mathcal{C}^{3,\alpha'}(X)$ for some $0<\alpha'<\alpha$, and $\varphi_{\bar t}$ is still a solution of \eqref{eq:2nd-continuity} for $t=\bar t$.
Moreover, by the estimate in (ii), we will also get $g_\varphi>0$.

Notice that (ii) and (iii), as well as the argument in the paragraph above, hold also for equation \eqref{eq:1st-calabi-conj} and for equation \eqref{eq:eq-ke}. So the difficulty to solve the first Calabi's conjecture  essentially arises in adapting the $\mathcal{C}^0$-estimate.

The  {\em $\mathcal{C}^0$-estimate} is very easy. This follows directly by applying the maximum principle to \eqref{eq:2nd-continuity} as we did in the proof of uniqueness. We get $\|\varphi_t\|_{L^\infty}\leq C \|f\|_{L^\infty}$ for some constant $C=C(X^n,\omega)$.

We prove the {\em Laplacian estimate}. Roughly, we look for an estimate of the type
$$ 0 < n + \Delta_g \varphi \leq C(\|\varphi\|_{L^\infty}),  $$
where $C$ is a function (exponential) so that together with the $L^\infty$ bound give the control on the Laplacian.
The crucial estimate is the following: denote $g_\varphi=g+\partial^2\varphi$, so $\mathrm{tr}_{g}g_{\varphi}=n+\Delta_g\varphi$; we prove that there exists a constant $B=B(X^n,g)$ such that
\begin{equation}\label{eq:laplacian-est}
\Delta_{g_\varphi} \log\mathrm{tr}_g g_\varphi \geq -B \mathrm{tr}_{g_\varphi}g - \frac{\mathrm{tr}_g\mathrm{Ric}(g_\varphi)}{\mathrm{tr}_gg_\varphi} .
\end{equation}
We prove \eqref{eq:laplacian-est}.
At a point $p_0$, we can assume that $g(p_0)=\mathrm{id}$ and $g_\varphi(p_0)$ is diagonal. We use also the K\"ahler condition so at the point $p_0$ we can further assume that we have complex normal coordinates, {\itshape i.e.} the first derivatives of the metric $g$ at $p_0$ vanish. This can be done  for any K\"ahler metric \cite{griffiths-harris}. Then
$$ \mathrm{tr}_g g_\varphi(p_0) = \sum_i (g_{\varphi})_{i\bar i}(p_0), \qquad \mathrm{tr}_{g_\varphi}g(p_0) = \sum_{j} g^{j \bar j}_\varphi(p_0) = \sum_{j} (g_{\varphi})_{j\bar j}(p_0)^{-1} . $$
At $p_0$, we compute:
\begin{eqnarray*}
\Delta_{g_\varphi} \mathrm{tr}_g g_{\varphi} (p_0) &=& g_{\varphi}^{p\bar q} \partial^2_{z_p\bar z_q}(g^{j\bar k}(g_{\varphi})_{j\bar k}) \\
&=& g_{\varphi}^{p\bar q} \partial^2_{z_p\bar z_q}g^{j\bar k}(g_{\varphi})_{j\bar k}
+ g_{\varphi}^{p\bar q} g^{j\bar k}\partial^2_{z_p\bar z_q}(g_{\varphi})_{j\bar k} \\
&=& g_{\varphi}^{p\bar q} \partial^2_{z_p\bar z_q}g^{j\bar k} (g_{\varphi})_{j\bar k} - g_\varphi^{p\bar q} g^{j\bar k} (\mathrm{R}_{\varphi})_{j\bar k p \bar q} + g_\varphi^{a\bar b} g_\varphi^{p\bar q} g^{j\bar k} (\partial_{z_j} (g_{\varphi})_{p \bar b}) (\partial_{\bar z_k}(g_{\varphi})_{a\bar q}) \\
&=& g_{\varphi}^{p\bar q} \partial^2_{z_p\bar z_q}g^{j\bar k} (g_{\varphi})_{j\bar k} - g^{j\bar k} (\mathrm{Ric}(\omega_{\varphi}))_{j\bar k} + g_\varphi^{a\bar b} g_\varphi^{p\bar q} g^{j\bar k} (\partial_{z_j} (g_{\varphi})_{p \bar b}) (\partial_{\bar z_k}(g_{\varphi})_{a\bar q}) \\
&=& g_{\varphi}^{p\bar p} \partial^2_{z_p\bar z_p}g^{j\bar j} (g_{\varphi})_{j\bar j} - g^{j\bar k} (\mathrm{Ric}(\omega_{\varphi}))_{j\bar k} + g_\varphi^{a\bar b} g_\varphi^{p\bar q} g^{j\bar k} (\partial_{z_j} (g_{\varphi})_{p \bar b}) (\partial_{\bar z_k}(g_{\varphi})_{a\bar q}) \\
&\geq& -B g_\varphi^{p\bar p}(g_\varphi)_{j\bar j} - g^{j\bar k} (\mathrm{Ric}(\omega_{\varphi}))_{j\bar k} + g_\varphi^{a\bar b} g_\varphi^{p\bar q} g^{j\bar k} (\partial_{z_j} (g_{\varphi})_{p \bar b}) (\partial_{\bar z_k}(g_{\varphi})_{a\bar q}) \\
&=& -B \mathrm{tr}_g g_\varphi \mathrm{tr}_{g_\varphi} g - g^{j\bar k} (\mathrm{Ric}(\omega_{\varphi}))_{j\bar k} + g_\varphi^{a\bar b} g_\varphi^{p\bar q} g^{j\bar k} (\partial_{z_j} (g_{\varphi})_{p \bar b}) (\partial_{\bar z_k}(g_{\varphi})_{a\bar q}) \\
&=& -B \mathrm{tr}_g g_\varphi \mathrm{tr}_{g_\varphi} g - g^{j\bar k} (\mathrm{Ric}(\omega_{\varphi}))_{j\bar k} + \sum_j g_\varphi^{a\bar a} g_\varphi^{p\bar p} |\partial_{z_j} (g_{\varphi})_{p \bar a}|_g
\end{eqnarray*}
where $-B=-B(X^n,g)$ is a lower bound for the holomorphic bisectional curvature of $g$ at $p_0$ (which is finite since $X$ is compact).
We now compute, at $p_0$:
\begin{eqnarray*}
\Delta_{g_\varphi} \log \mathrm{tr}_g g_{\varphi} (p_0) &=& \frac{\Delta_{g_\varphi} \mathrm{tr}_g g_\varphi}{\mathrm{tr}_g g_\varphi} - \frac{g_\varphi^{p\bar q}\partial_p(\mathrm{tr}_g g_\varphi)\partial_{\bar q}(\mathrm{tr}_g g_\varphi)}{(\mathrm{tr}_g g_\varphi)^2} \\
&\geq& -B \mathrm{tr}_{g_\varphi} g - \frac{g^{j \bar k}(\mathrm{Ric}(\omega_{\varphi}))_{j\bar k}}{\mathrm{tr}_g g_{\varphi}}+\frac{1}{\mathrm{tr}_g g_\varphi} \sum_j g_\varphi^{a\bar a} g_\varphi^{p\bar p} |\partial_{z_j} (g_{\varphi})_{p \bar a}|_g \\
&& - \frac{1}{(\mathrm{tr}_g g_\varphi)^2} g_\varphi^{p\bar p}\partial_p (g_{\varphi})_{a\bar a}\partial_{\bar p} (g_{\varphi})_{b\bar b} .
\end{eqnarray*}
We estimate the last term by the Cauchy-Schwarz inequality twice:
\begin{eqnarray*}
g_\varphi^{p\bar p}\partial_p (g_{\varphi})_{a\bar a}\partial_{\bar p} (g_{\varphi})_{b\bar b} &=&
(g_\varphi^{p\bar p})^{\frac{1}{2}} (\partial_p (g_{\varphi})_{a\bar a}) (g_\varphi^{p\bar p})^{\frac{1}{2}} (\partial_p (g_{\varphi})_{b\bar b}) \\
&\leq& \sum_{a,b} (\sum_p g_\varphi^{p\bar p} |\partial_p (g_{\varphi})_{a\bar a}|^2)^{\frac12} (\sum_p g_\varphi^{p\bar p}|\partial_q (g_{\varphi})_{b\bar b}|^2)^{\frac12} \\
&=& \sum_a \sum_p g_\varphi^{p\bar p} |\partial_p (g_{\varphi})_{a\bar a}|^2 \\
&=& (\sum_a ((g_\varphi)_{a\bar a})^{\frac12} (\sum_p g_\varphi^{p\bar p} g_\varphi^{a\bar a} |\partial_p (g_{\varphi})_{a\bar a}|^2)^{\frac{1}{2}})^2 \\
&\leq& (\sum_a (g_{\varphi})_{a\bar a})(\sum_b \sum_p g_\varphi^{p\bar p} g_\varphi^{b\bar b} |\partial_p (g_{\varphi})_{b\bar b}|^2) .
\end{eqnarray*}
Thus we get
\begin{eqnarray*}
\Delta_{g_\varphi} \log \mathrm{tr}_g g_{\varphi} (p_0)
&\geq& -B \mathrm{tr}_{g_\varphi} g - \frac{g^{j \bar k}(\mathrm{Ric}(\omega_{\varphi}))_{j\bar k}}{\mathrm{tr}_g g_{\varphi}}+\frac{1}{\mathrm{tr}_g g_\varphi} \sum_j g_\varphi^{a\bar a} g_\varphi^{p\bar p} |\partial_{z_j} (g_{\varphi})_{p \bar a}|_g \\
&& - \frac{1}{(\mathrm{tr}_g g_\varphi)^2} g_\varphi^{p\bar p}\partial_p (g_{\varphi})_{a\bar a}\partial_{\bar p} (g_{\varphi})_{b\bar b} \\
&\geq& -B \mathrm{tr}_{g_\varphi} g - \frac{g^{j \bar k}(\mathrm{Ric}(\omega_{\varphi}))_{j\bar k}}{\mathrm{tr}_g g_{\varphi}}+\frac{1}{\mathrm{tr}_g g_\varphi} \sum_j g_\varphi^{a\bar a} g_\varphi^{p\bar p} |\partial_{z_j} (g_{\varphi})_{p \bar a}|_g \\
&& - \frac{1}{\mathrm{tr}_g g_\varphi} g_\varphi^{p\bar p} g_\varphi^{a\bar a} |\partial_p (g_{\varphi})_{a\bar a}|^2 \\
&\geq& -B \mathrm{tr}_{g_\varphi} g - \frac{g^{j \bar k}(\mathrm{Ric}(\omega_{\varphi}))_{j\bar k}}{\mathrm{tr}_g g_{\varphi}}+\frac{1}{\mathrm{tr}_g g_\varphi} \sum_j g_\varphi^{a\bar a} g_\varphi^{p\bar p} |\partial_{z_j} (g_{\varphi})_{p \bar a}|_g \\
&& - \frac{1}{\mathrm{tr}_g g_\varphi} \sum_j g_\varphi^{p\bar p} g_\varphi^{a\bar a} |\partial_p (g_{\varphi})_{j\bar a}|^2 \\
&\geq& -B \mathrm{tr}_{g_\varphi} g - \frac{g^{j \bar k}(\mathrm{Ric}(\omega_{\varphi}))_{j\bar k}}{\mathrm{tr}_g g_{\varphi}} ,
\end{eqnarray*}
where we used the K\"ahler condition. This proves \eqref{eq:laplacian-est}.
We prove now the uniform equivalence between the metrics $g$ and $g_\varphi$, that will yield to the Laplacian and to partial $\mathcal{C}^2$-estimates. More precisely, we prove that there exists a constant $C=C(X^n, \omega, \|f\|_{L^\infty}, \inf \Delta_\omega f, \|\varphi\|_{L^\infty})>0$ such that
$$ C^{-1} g_{j\bar k} \leq (g_\varphi)_{j\bar k} \leq C g_{j\bar k} , $$
by using the maximum principle.
We plug-in \eqref{eq:2nd-continuity} in the estimate \eqref{eq:laplacian-est} to get:
$$ \Delta_{g_\varphi} \log \mathrm{tr}_g g_\varphi \geq -B \mathrm{tr}_{g_\varphi} g + \frac{\Delta f + \mathrm{tr}_g g_\varphi - n - \mathrm{Sc}_\omega}{\mathrm{tr}_g g_\varphi} . $$
By the Cauchy-Schwarz inequality, we have
$$ \mathrm{tr}_{g}g_\varphi \cdot \mathrm{tr}_{g_\varphi}g \geq n^2 . $$
Then, under the normalization for $f$, we have a constant $C$ such that
$$ \Delta_\varphi \log\mathrm{tr}_g g_\varphi \geq - C \mathrm{tr}_{g_\varphi} g . $$
Since $\Delta_{g_\varphi} \varphi = n - \mathrm{tr}_{g_\varphi}g$, then for $A\gg 1$:
$$ \Delta_{g_\varphi}(\log\mathrm{tr}_g g_\varphi - A\varphi)\geq \mathrm{tr}_{g_\varphi}g-nA . $$
By the maximum principle, at a maximum point $p_{\text{max}}$, we get
$$ \mathrm{tr}_{g_\varphi}g (p_{\text{max}}) \leq A . $$
On the other side, by the equation, we get, in suitable coordinates at $p_{\text{max}}$, that $\prod g_{\varphi, i\bar i}(p_{\text{max}}) = \exp (f(p_{\text{max}})+\varphi(p_{\text{max}})) \leq C$ for some constant $C$, then
$$ \mathrm{tr}_g g_\varphi(p_{\text{max}}) \leq C. $$
Then, at any point,
$$ \log\mathrm{tr}_g g_\varphi - A \varphi \leq C - A \varphi(p_{\text{max}}) , $$
whence
$$ \sup\log\mathrm{tr}_g g_\varphi \leq C . $$

We  now shortly discuss  {\em higher-order estimates}. For the $\mathcal{C}^3$ estimates, after Phong-Sesum-Sturm \cite{pss}, it is convenient to look at estimates of the Christoffel symbols: this will suffice because $g_\varphi$ and $g$ are equivalent. Set
$$ S(\varphi)_{j k}^{i}:=\Gamma(g)_{jk}^i-\Gamma(g_\varphi)_{jk}^i . $$
A long computation as before yields,
$$ \|S\|_{L^\infty}\leq C . $$
By the above $\mathcal{C}^0$-estimate, we have that $g_\varphi$ is equivalent to $g$, then we get
$ |\partial_{z_j z_k \bar z_\ell}^3 \varphi |\leq C . $
This gives $\partial_{z_j \bar z_k}\varphi \in \mathcal{C}^{0,\alpha}_{\omega}(X)$, for any $j$, $k$. By Schauder as before, we get $\partial_{z_j} \varphi \in \mathcal{C}^{2,\alpha}_{\omega}(X)$, whence finally $\partial^3\varphi \in\mathcal{C}^{0,\alpha}_{\omega}(X)$.

\subsection{Existence of solutions for first Calabi's conjecture}

We consider now the first Calabi conjecture. Essentially, the only difference is the $\mathcal{C}^0$-estimate: the Laplacian and higher-order estimates, as well as the general argument, can be repeated with very small variations, see \cite{tian-book}.

Under the suitable normalization $\sup_X \varphi =-1$, we prove the Moser iteration:
$$ \|\varphi\|_{L^\infty} \leq C (\|\varphi\|_{L^2_\omega}+1) . $$
By Poincaré inequality, we prove:
$$ \|\varphi\|_{L^2_\omega} \leq C (\|\varphi\|_{L_\omega^1}+1) . $$
Finally, by Green function argument, we prove:
$$ \|\varphi\|_{L_\omega^1}\leq C . $$

The essential steps in the Moser iteration are the following. Set $\varphi_{-}:=-\varphi\geq 1$. Consider the equation
\begin{eqnarray*}
(\exp f-1)\omega^n &=& \omega_\varphi^n-\omega^n \\
&=& -\sqrt{-1}\partial\overline\partial\varphi_{-} \wedge (\omega_\varphi^{n-1}+\omega_{\varphi}^{n-2}\wedge\omega+\cdots+\omega_\varphi\wedge\omega^{n-2}+\omega^{n-1}) ;
\end{eqnarray*}
now muplitply by $\varphi_{-}^p$ and integrate as before:
\begin{eqnarray*}
\dashint_X \varphi_{-}^p (\exp f-1)\omega^n &\geq& p \cdot \dashint_X \varphi_{-}^{p-1} \sqrt{-1} \partial\varphi_{-}\wedge\overline\partial\varphi_{-} \wedge\omega^{n-1} \\
&=& \frac{4p}{n(p+1)^2} \dashint_X (|\nabla (\varphi_{-}^{\frac{p+1}{2}})|^2\omega^n ,
\end{eqnarray*}
(where, hereafter, we use the notation $\dashint_X f \omega^n = \frac{1}{\mathrm{Vol}_{\omega}}\int_X f \omega^n$,)
and on the other side clearly
$$ C f \dashint_X (\varphi_{-})^{p+1}\omega^n \geq \dashint_X \varphi_{-}^p (\exp f-1)\omega^n . $$
We use the Sobolev embedding, see {\itshape e.g.} \cite[Theorem 2.21]{aubin-book}, \cite[Theorem 7.10]{gilbarg-trudinger}:
$$ W^{1,q}_\omega \hookrightarrow L^{\frac{q\cdot 2n}{2n-q}}_\omega . $$
Then, for $q=2$:	
$$ \dashint_X (\varphi_{-})^{p+1} \omega^n \geq C_1 \frac{p}{(p+1)^2} (\dashint_X |(\varphi_{-})^{\frac{p+1}{2}}|^{\frac{2n}{n-1}})^{\frac{n-1}{2n}}-C_2 \dashint_X (|\varphi|^{\frac{p+1}{2}})^2 , $$
which yields
$$ \|\varphi_{-}\|_{L_\omega^{(p+1)\frac{n}{n-1}}} \leq (C(p+1))^{\frac{1}{p+1}} \|\varphi_{-}\|_{L_\omega^{p+1}} . $$
We iterate (observing that the constant in the above estimate stay under control) to get:
$$ \|\varphi\|_{L^\infty} \leq \tilde C \|\varphi\|_{L_\omega^2} . $$

We now prove the reduction from $L^2$-control to $L^1$-control thanks to the Poincaré inequality, see {\itshape e.g.} \cite[Corollary 4.3]{aubin-book}, \cite[page 164]{gilbarg-trudinger}:
\begin{eqnarray*}
C \dashint_X |\varphi| \omega^n &\geq& \dashint_X \varphi (1-\exp f)\omega^n \\
&\geq& \frac{1}{n} \dashint_X |\nabla \varphi |^2 \omega^n \\
&\geq& \frac{\lambda_j(\omega)}{n} (\dashint_X |\varphi|^2 \omega^n - (\dashint_X \varphi \omega^n)^2), \end{eqnarray*}
using the normalization: $\varphi \leq -1$.

We finally prove the $L^2$-control. By consider the Green function $G_\omega$, see {\itshape e.g.} \cite[Theorem 4.13]{aubin-book}: let $p_{\text{max}}$ a maximum point, then
$$ -1 = \varphi(p_{\text{max}}) = - \dashint_X |\varphi| \omega^n - \dashint_X \Delta_\omega \varphi G_\omega (p,\_) \omega^n . $$
Then, using that $\Delta_\omega \varphi>-n$, we get:
$$ -1 + \int_X |\varphi| \leq n \int_X G_\omega(p,\_) \omega^n \leq C . $$
This completes the $\mathcal{C}^0$ estimate for the first Calabi conjecture.

\subsection{Consequences of Calabi's conjectures}
We give here some applications of the Calabi conjecture.

\begin{theorem}[{Kobayashi \cite{kobayashi-Ann}, Yau \cite{yau-PNAS}}]
A Fano manifold is simply-connected.
\end{theorem}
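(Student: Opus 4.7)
The strategy combines Yau's first Calabi conjecture with Kodaira vanishing and multiplicativity of the holomorphic Euler characteristic under \'etale covers. Yau supplies a Riemannian metric of strictly positive Ricci curvature, which is enough to make $\pi_1(X)$ finite; Kodaira then forces the degree of the universal cover to be $1$.

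Since $X$ is Fano, I would first pick any K\"ahler form $\rho\in 2\pi c_1(X)$ and apply Theorem \ref{thm:1st-calabi-conj} to obtain a K\"ahler metric $\omega$ with $\mathrm{Ric}(\omega)=\rho>0$. By compactness of $X$ this gives $\mathrm{Ric}(\omega)\geq c\,\omega$ for some $c>0$, so Myers' theorem implies that $\pi_1(X)$ is finite. Denote by $\pi\colon\tilde X\to X$ the universal cover, which is then a finite \'etale holomorphic map of degree $d=|\pi_1(X)|$; in particular $\tilde X$ is a compact K\"ahler manifold.

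Next, I would argue that $\tilde X$ is itself Fano: since $\pi$ is \'etale we have $K_{\tilde X}=\pi^*K_X$, and the pull-back of an ample line bundle by a finite map remains ample. Kodaira vanishing applied to the ample bundle $K_X^{-1}$ gives $H^q(X,\mathcal{O}_X)=H^q(X,K_X\otimes K_X^{-1})=0$ for every $q\geq 1$, so $\chi(\mathcal{O}_X)=h^0(\mathcal{O}_X)=1$; the same reasoning on $\tilde X$ yields $\chi(\mathcal{O}_{\tilde X})=1$. On the other hand, the standard multiplicativity of the holomorphic Euler characteristic under finite \'etale covers (via $\pi_*\mathcal{O}_{\tilde X}$ being locally free of rank $d$, or via Hirzebruch--Riemann--Roch) gives
\[
\chi(\mathcal{O}_{\tilde X})=\chi(X,\pi_*\pi^*\mathcal{O}_X)=d\cdot\chi(\mathcal{O}_X)=d,
\]
so $d=1$ and $\pi_1(X)$ is trivial.

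The only nontrivial external inputs are Yau's solution of the first Calabi conjecture and Kodaira vanishing. The logically subtle point, and the place where a purely Riemannian approach would stall, is the passage from finiteness to triviality of $\pi_1$: curvature positivity alone delivers only Myers-type finiteness, and it is precisely the complex-analytic rigidity encoded by $\chi(\mathcal{O})=1$, together with its multiplicativity under covers, that upgrades this to simple-connectedness.
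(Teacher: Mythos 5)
Your proof is correct and follows essentially the same route as the paper: Yau's solution of the first Calabi conjecture produces a metric of positive Ricci curvature, Myers gives finiteness of $\pi_1$ (equivalently, compactness of the universal cover), and Kodaira vanishing plus multiplicativity of $\chi(\mathcal{O})$ under finite \'etale covers (via Hirzebruch--Riemann--Roch) forces the degree of the cover to be $1$. The only cosmetic difference is that the paper applies Myers to the pulled-back metric on $\tilde X$ rather than to $X$ itself, which is the same step in different words.
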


\begin{proof}
Consider $\pi\colon \tilde X \to X$ the universal cover.
By assumption, $X$ is Fano, that is, there exists a K\"ahler form $\omega\in 2\pi c_1(X)>0$.
By the first Calabi conjecture, Theorem \ref{eq:1st-calabi-conj}, there exists a K\"ahler form $\omega'$ on $X$ such that $\mathrm{Ric}({\omega'})=\omega$. Consider the K\"ahler metric $\pi^*\omega'$ on $\tilde X$. It has $\mathrm{Ric}({\pi^*\omega')}=\pi^*\omega>0$ bounded below away from zero. Then, by the Myers theorem in Riemannian geometry, see {\itshape e.g.} \cite[Theorem 6.51]{besse}, $\tilde X$ is compact, and so it is Fano too.
By using that $X$ is Fano, that is the anti-canonical bundle is ample whence positive, and thanks to the Kodaira vanishing theorem, see {\itshape e.g.} \cite[Theorem 7.13]{voisin-1}, we have $h^{0,j}(X)=\dim H^{j}(X;\mathcal{O}_X)=\dim H^{j}(X;K_X\otimes K_X^{-1})=0$ for $j>0$; it follows that the Euler characteristic for the structure sheaf $\mathcal{O}_X$ is $\chi(\mathcal{O}_X):=\sum_j (-1)^j h^{0,j}(X)=1$.
For the same reasons, also $\chi(\tilde X)=1$.
On the other side, the Euler characteristic is additive on a cover, for example, as a consequence of the Hirzebruch-Riemann-Roch theorem. Then $X=\tilde X$.
\end{proof}
 
 As innocent as it seems, the above statement is kind of deep:
\begin{remark}
	\begin{itemize}
		\item The  Calabi's conjecture is crucially used to prove that the \emph{universal cover is compact}. All the other arguments would work the same for arbitrary finite covers, giving the weaker statement that $X$  has trivial pro-finite completion of the fundamental group.
		\item There exists a fully algebraic prove that makes use of rationally connectivity of Fanos (hence also very deep results in algebraic geometry based on Mori's Bend-and-Break, which uses reduction in finite  characteristic $p$ ({\itshape e.g.} \cite{mori, kollar-mori, debarre}).
		\end{itemize}
\end{remark}

The next consequence is a very famous inequality between Chern numbers. 

\begin{theorem}[{Bogomolov-Miyaoka-Yau inequality \cite{vandeven, bogomolov, miyaoka, yau-PNAS, yau-CPAM}}]
Let $X^n$ be a compact K\"ahler manifold.
\begin{itemize}
\item If $X^n$ is Fano K\"ahler-Einstein, then
$$ n c_1^n(X) \leq 2(n+1) c_1^{n-2}(X) c_2(X) , $$
with equality if and only if $X=\mathbb{CP}^n$.
\item If $X^n$ is with ample canonical bundle, and then K\"ahler-Einstein by Theorem \ref{eq:2nd-calabi-conj}, then
$$ n (-1)^{n-2} c_1^n(X) \leq 2(n+1) (-1)^{n-2} c_1^{n-2}(X) c_2(X) , $$
with equality if and only if $X^n=\mathbb{B}^n/\Gamma$.
\end{itemize}
\end{theorem}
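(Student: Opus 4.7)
The plan is to reduce both Chern numbers to Chern--Weil integrals of curvature polynomials and exploit that, on a K\"ahler--Einstein manifold, the curvature tensor has a particularly rigid pointwise structure. Since $\mathrm{Ric}(\omega)=\lambda\omega$ implies $c_1(X)=\frac{\lambda}{2\pi}[\omega]$ as cohomology classes, one computes in $H^*(X;\R)$:
\[
c_1^n = \left(\tfrac{\lambda}{2\pi}\right)^n[\omega]^n, \qquad c_1^{n-2}\cdot c_2 = \left(\tfrac{\lambda}{2\pi}\right)^{n-2}[\omega]^{n-2}\cdot c_2.
\]
Dividing through, in both signs of $\lambda$, the desired inequality is equivalent to the single statement
\[
\int_X \bigl(2(n+1)c_2(X)-n\,c_1(X)^2\bigr)\wedge \omega^{n-2}\ \geq\ 0,
\]
the sign factor $(-1)^{n-2}$ in the negative case absorbing the factor $\lambda^{n-2}$ into $|\lambda|^{n-2}>0$. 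Note that $c_1^2\wedge\omega^{n-2}=\frac{\lambda^2}{4\pi^2}\omega^n$ pointwise, so the quadratic term is already rewritten in terms of the background volume.

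Next I would establish the pointwise identity at the heart of the argument. Representing $c_1$ by $\frac{1}{2\pi}\mathrm{Ric}(\omega)$ and $c_2$ by the standard Chern--Weil $4$-form built from $\mathrm{Rm}$, one computes that, for \emph{any} K\"ahler metric,
\[
\bigl(2(n+1)c_2-n\,c_1^2\bigr)\wedge \frac{\omega^{n-2}}{(n-2)!}
\ =\ C_n\, |H|^2\,\frac{\omega^n}{n!},
\]
where $C_n>0$ is a dimensional constant and $H_{i\bar j k\bar \ell}$ is the trace-free (Bochner) component of the K\"ahler curvature tensor in the orthogonal decomposition
\[
R_{i\bar j k\bar\ell}=H_{i\bar j k\bar\ell}+\frac{1}{n+2}\bigl(\mathrm{Ric}_{i\bar j}g_{k\bar\ell}+\mathrm{Ric}_{k\bar\ell}g_{i\bar j}+\mathrm{Ric}_{i\bar\ell}g_{k\bar j}+\mathrm{Ric}_{k\bar j}g_{i\bar\ell}\bigr)-\frac{\mathrm{Sc}}{(n+1)(n+2)}\bigl(g_{i\bar j}g_{k\bar\ell}+g_{i\bar\ell}g_{k\bar j}\bigr).
\]
This identity is verified by a direct (but lengthy) local calculation at a point where $g_{i\bar j}=\delta_{ij}$ is diagonalized and first derivatives vanish, computing $\mathrm{tr}(R\wedge R)$ and $\mathrm{tr}(R)\wedge\mathrm{tr}(R)$ and comparing with the quadratic invariants of $H$. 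The Einstein hypothesis is not even needed for the inequality itself; it only streamlines its interpretation. Integration and non-negativity of $|H|^2$ then yield the BMY inequality immediately.

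For the equality case one uses that $H\equiv 0$ is the classical tensorial characterization of constant holomorphic sectional curvature on a K\"ahler manifold (see, e.g., Kobayashi--Nomizu). Combined with the Einstein condition, this forces the sectional curvature to be a constant $c$ of the same sign as $\lambda$. By the uniformization of complex space forms, the universal cover is then $\mathbb{CP}^n$, $\C^n$, or the unit ball $\mathbb{B}^n$ according to the sign. In the Fano case one gets $X\cong\mathbb{CP}^n$ (the Calabi--Yau case is excluded by $c_1\neq 0$), while in the ample canonical case one gets $X\cong\mathbb{B}^n/\Gamma$ for $\Gamma\subset\mathrm{Aut}(\mathbb{B}^n)=\mathrm{PU}(n,1)$ a cocompact torsion-free lattice (compactness and freeness being automatic since $X$ is a compact complex manifold).

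The main technical obstacle is the pointwise Chern--Weil identity in the second paragraph: one must carefully expand the $4$-form representatives of $c_1^2$ and $c_2$ in an orthonormal complex frame, recognize the quadratic-in-$R$ combination obtained after wedging with $\omega^{n-2}/(n-2)!$ as precisely the squared norm of the trace-free projection of $R$ viewed as a symmetric endomorphism of $\mathrm{Sym}^2 T^{1,0}X$, and track the positive dimensional constant $C_n$. Everything else is either cohomological bookkeeping or invocation of classical rigidity theorems for complex space forms.
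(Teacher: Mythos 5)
Your cohomological reduction and your treatment of the equality case run parallel to the paper's argument, but the central pointwise identity you assert is false, and with it the claim that ``the Einstein hypothesis is not even needed for the inequality itself.'' For a general K\"ahler metric, the Chern--Weil representative of $\bigl(2(n+1)c_2-n\,c_1^2\bigr)\wedge\omega^{n-2}$ is \emph{not} a positive multiple of $|H|^2\,\omega^n$ alone: expanding $\mathrm{tr}(\Omega\wedge\Omega)$ and $\mathrm{tr}(\Omega)\wedge\mathrm{tr}(\Omega)$ against the Bochner decomposition you wrote down produces, in addition to the $|H|^2$ term, a term proportional to $-\,|\mathrm{Ric}^0|^2$, where $\mathrm{Ric}^0=\mathrm{Ric}-\frac{\mathrm{Sc}}{n}g$ is the traceless Ricci tensor. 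Already for $n=2$ one has, for every compact K\"ahler surface, $3c_2-c_1^2=\frac{1}{8\pi^2}\int_X\bigl(3|W^-|^2-\tfrac12|\mathrm{Ric}^0|^2\bigr)\,dV_g$, and ruled surfaces over curves of genus $g\ge 2$ satisfy $c_1^2=8(1-g)>12(1-g)=3c_2$; so an identity of your form valid for an arbitrary K\"ahler metric would prove a false inequality. The Einstein condition is precisely what kills the $|\mathrm{Ric}^0|^2$ contribution, and the paper uses it twice in the pointwise computation: once to write the traceless part as $R^0_{i\bar jk\bar\ell}=R_{i\bar jk\bar\ell}-\frac{\lambda}{n+1}\bigl(g_{i\bar j}g_{k\bar\ell}+g_{i\bar\ell}g_{k\bar j}\bigr)$ with $|R^0|^2=|R|^2-\frac{2\lambda^2 n}{n+1}$, and once to obtain $n(n-1)\,\mathrm{tr}(\Omega\wedge\Omega)\wedge\omega^{n-2}=(\lambda^2 n-|R|^2)\,\omega^n$.

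A secondary inaccuracy: vanishing of the Bochner tensor $H$ alone is not the tensorial characterization of constant holomorphic sectional curvature (Bochner-flat K\"ahler metrics need not be space forms); what characterizes it is the vanishing of the full traceless part $R^0$ in the Einstein normalization above. Since you do invoke the Einstein condition at that stage, your equality discussion is salvageable, and more generally the repair is clear: insert $\mathrm{Ric}=\lambda g$ \emph{before} the pointwise Chern--Weil computation, at which point your argument collapses to the paper's proof --- the inequality follows by integrating $|R^0|^2\ge 0$, and equality forces constant holomorphic sectional curvature, whence $X\cong\mathbb{CP}^n$ in the Fano case and $X\cong\mathbb{B}^n/\Gamma$ in the ample canonical case by the classification of complex space forms.
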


\begin{proof}
For more details see {\itshape e.g.} \cite{tosatti-Expos}. First of all, we fix $\omega$ K\"ahler-Einstein, with cosmological constant $\lambda$, and we notice that we can rewrite the two inequalities that we want to prove as
$$ \left( \frac{2(n+1)}{n} c_2 - c_1^2 \right) [\omega]^{n-2} \geq 0 , $$
and we also want to prove that equality holds if and only if $X$ has constant holomorphic sectional curvature. Indeed, recall that simply-connected K\"ahler manifolds with constant holomorphic sectional curvature are holomorphically isometric to either $\mathbb{C}^n$, or $\mathbb{B}^n$, or $\mathbb{CP}^n$, up to homothety, see {\itshape e.g.} \cite[Theorems IX.7.8, IX.7.9]{kobayashi-nomizu-2} (the proof is very similar to the corresponding result in the real case). Recall that the {\em holomorphic sectional curvature} is $H(X) := K(X \wedge JX):= R(X,JX,JX,X)$ for $|X|=1$.
Then $H$ is constant if and only if the traceless part $R^0$ of $R$ vanishes, where:
$$ R^0_{i\bar j k \bar \ell} = R_{i\bar j k \bar \ell} - \frac{\lambda}{n+1} (g_{i\bar j}g_{k\bar \ell}+g_{i\bar \ell}g_{k\bar j} ). $$
Using the KE condition, we immediately see that \begin{equation}
\label{a} |R^0|^2=|R|^2-\frac{2\lambda^2 n}{n+1}
\end{equation}
Let $\Omega_i^j= \sqrt{-1}\sum {{R_i}^j}_{k \bar l} dz_k \wedge d\bar{z}_l$ be the curvature seen as a form valued endomorphism of the tangent bundle. By  Chern-Weil theory we can represent the first Pontryagin class (if you want, this a possible definition of such class) as: $$(2\pi^2)^{-1}\mathrm{tr}\,(\Omega\wedge\Omega):=(2\pi^2)^{-1}\sum_{i,k}\Omega_i^k\wedge \Omega_k^{i} \in p_1(X)=c_1^2(X)-2c_2(X).$$ 
Then, using the KE condition again, we compute:
\begin{equation} \label{b}
n(n-1)\mathrm{tr}\,(\Omega\wedge\Omega)\wedge \omega^{n-2}=(\lambda^2n-|R|^2)\omega^n
\end{equation} 

Finally, integrating \eqref{a} and \eqref{b} on $X$, 
$$ \frac{1}{n(n-1)4\pi^2} \int_X |R^0|^2 \omega^n = \left( 2 c_2(X)-\frac{n}{n+1} c_1^2(X) \right) [\omega]^{n-2} , $$
whence we get the statement.

\end{proof}

\begin{remark}
We would like to make some observations on the  Bogomolov-Miyaoka-Yau (BMY) inequality.
\begin{itemize}
\item For compact complex surfaces, the BMY inequalities reduce to
$$ c_1^2 \leq 3c_2 $$
where $c_2=e(X)$ is the topological Euler characteristic and $c_1^2=2e(X)+3\tau(X)$ with $\tau(X)$ the signature of the intersection form on the second cohomology. Hence it is purely topological. Surfaces which realize the equality are hard to construct ({\itshape e.g.} Mumford Fake projective spaces \cite{mumford-Fake}).

\item For Fano KE threefolds, the BMY inequality reduces to
$$ c_1^3(X) \leq 64 = c_1^3(\mathbb{CP}^3) $$
thanks to the fact that $c_1(X)c_2(X) = 24 \int_X \mathrm{td}(X)=\chi(\mathcal{O}_X)=1$. In dimension $n\geq 4$,  it is proven that for KE Fano manifolds  \cite{fujita-arxiv1508}	 
$$ c_1^n(X) \leq (n+1)^n= c_1^n(\mathbb{CP}^n)$$
continues to hold  (\emph{not} as a consequence of BMY, but using K- stability considerations). We should remark that there are Fano manifolds whose volume is bigger than $c_1^n(\mathbb{CP}^n)$: {\itshape e.g.} certain $\C\mathbb{P}^1$-bundles over $\C\mathbb{P}^3$. 
\end{itemize}
\end{remark}

\section{First obstructions to existence of K\"ahler-Einstein metrics}
In this section, we discuss two obstructions to the existence of K\"ahler-Einstein metrics:
\begin{description}
\item[Matsushima] the automorphisms group of K\"ahler-Einstein Fano manifolds is reductive (namely, it is the complexification of a maximal compact subgroup);
\item[Futaki] the functional $F$ defined on the space of holomorphic vector fields as in \eqref{eq:futaki} vanishes when there exists a K\"ahler-Einstein metric.
\end{description}
More in general, these obstructions hold for {\em constant scalar curvature K\"ahler metric} (cscK). Note that, if $\omega\in 2\pi c_1(X)$ is a cscK metric on $X^n$ Fano, then $\omega$ is in fact KE. Indeed, by the $\sqrt{-1}\partial\overline\partial$-Lemma,  we have that   $Ric(\omega)-\omega=\sqrt{-1}\partial\overline\partial f$. Taking trace with the metric, we find that $0=Sc(\omega)-n=\Delta_\omega f$, {\itshape i.e.} $f$ has to be constant. 

We are going to prove the statements in the more general cscK case, since they require only a bit more effort than in the KE case.

\subsection{Matsushima-(Lichnerowicz) theorem}\label{sec:matsushima}
On a $(X^n,g)$ Hermitian manifold, we consider the Lie algebra $\mathfrak{h}$ of {\em holomorphic vector fields} (that is, real vector fields $V$ such that $\mathcal{L}_V J=0$, equivalently, $\overline\partial V^{(1,0)}=0$). 

We also consider the Lie algebra $\mathfrak{k}_g$ of the isometry group $\mathrm{Isom}(X,g)$ (that is, $\mathfrak{k}_g$ contains vector fields $V$ such that $\mathcal{L}_V g=0$, called {\em Killing vector fields}). If $(X^n,g)$ is compact K\"ahler, then $\mathfrak{k}_g \subseteq \mathfrak{h}$, see {\itshape e.g.} \cite[Proposition 2.2.1]{gauduchon-book}.
Indeed, notice that the associated $(1,1)$-form $\omega$ to $g$ is harmonic, {\itshape e.g.}  since $d \ast \omega=\frac{1}{(n-1)!}d\omega^{n-1}=0$. Take any $\psi\in \mathrm{Isom}_0(X,g)$, the connected component of the identity. Then $\psi^*[\omega]=[\omega]$. By uniqueness of the harmonic representative, $\psi^*\omega=\omega$. Thus, we have proven that, for any $V\in \mathfrak{k}_g$, it holds $\mathcal{L}_{V}\omega=0$. This, together with $\mathcal{L}_Vg=0$, gives $\mathcal{L}_VJ=0$.

We also consider the Lie algebra $\mathfrak{a}_g$ of {\em parallel vector fields} (that is, $\mathfrak{a}_g$ contains vector fields $V$ such that $\nabla V=0$, where $\nabla$ is the Levi-Civita connection of $g$). Clearly, $\mathfrak{a}_g \subseteq \mathfrak{k}_g$ and $\mathfrak{a}_g$ is Abelian, since $\nabla$ is torsion-free. If $g$ is K\"ahler, then $\mathfrak{a}_g$ is complex, that is, if $V\in \mathfrak{a}_g$ then also $JV\in \mathfrak{a}_g$ (just using that $J$ is parallel).

Finally, we consider the space $\mathfrak{h}_0$ of {\em holomorphic vector fields with zeroes}, consisting of $V\in\mathfrak{h}$ such that $V(p)=0$ at some point $p$, and $\mathfrak{k}_0$ the space of Killing vector fields with zeros.

Before stating the next theorem, we recall that the {\em Albanese map} \cite{blanchard}, also called Jacobi map, associated to a compact K\"ahler manifold $X^n$ is
$$
A \colon X \to \mathrm{Alb}(X) := H^0(X;\Omega_X^1)^\vee \slash H_1(X;\mathbb Z) \simeq \mathbb C^{h^{1,0}} \slash \Gamma ,
$$
$$
p \mapsto \int_{p_0}^{p} \_ ,
$$
where $p_0$ is fixed. It is well defined since any holomorphic form is closed on a compact K\"ahler manifold. Any morphism from $X$ to a compact complex torus factors uniquely through the Albanese map.
It yields an isomorphism
\begin{equation}\label{eq:Alb-isom}
 A^* \colon H^0(\mathrm{Alb}(X);\Omega^1_{\mathrm{Alb}(X)}) \stackrel{\simeq}{\to} H^0(X;\Omega^1_X) .
\end{equation}

We also consider the operator that acts on $f\in\mathcal{C}^\infty(X;\mathbb C)$ as
$$ \partial^\sharp f := (\mathrm{grad}_g f)^{1,0} \stackrel{\text{loc}}{=} g^{i\bar j}f_{,\bar j} \partial_{z_i} . $$

\begin{theorem}[{Matsushima \cite{matsushima}, Carrell-Lieberman \cite{carrell-lieberman}, LeBrun-Simanca \cite{lebrun-simanca}}]
Let $(X^n,\omega)$ be a compact K\"ahler manifold. Then the following properties are equivalent for a holomorphic vector field $Z\in\mathfrak{h}$:
\begin{enumerate}[(i)]
\item $Z\in\mathfrak{h}_0$;
\item $Z$ is tangent to the fibres of the Albanese map;
\item $Z=\partial^\sharp f$ for some $f\in\mathcal{C}^\infty(X;\mathbb C)$.
\end{enumerate}
\end{theorem}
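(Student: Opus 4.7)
The plan is to prove the cycle $(i) \Rightarrow (ii) \Leftrightarrow (iii) \Rightarrow (i)$, where the Hodge-theoretic step $(ii) \Leftrightarrow (iii)$ is the conceptual heart and the implication $(iii) \Rightarrow (i)$ is the main obstacle.

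For $(i) \Rightarrow (ii)$, the key observation is that for any holomorphic $1$-form $\alpha \in H^0(X; \Omega^1_X)$ and any $Z \in \mathfrak{h}$, the contraction $\alpha(Z) = \alpha_i Z^i$ is a holomorphic function on $X$: indeed, in local coordinates $\bar\partial(\alpha_i Z^i) = (\bar\partial \alpha_i) Z^i + \alpha_i (\bar\partial Z^i) = 0$, both terms vanishing by holomorphicity of $\alpha$ and $Z$. Since $X$ is compact, $\alpha(Z)$ is constant, and if $Z(p_0)=0$ then that constant must vanish. Via the isomorphism \eqref{eq:Alb-isom}, this precisely means $dA(Z) = 0$ pointwise, i.e.\ $Z$ is tangent to the fibres of the Albanese map.

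For $(ii) \Leftrightarrow (iii)$, I would apply Hodge theory to the $(0,1)$-form $Z^\flat := g_{j\bar k} Z^j d\bar z^k$. A short calculation using $\bar\partial Z = 0$ and the K\"ahler identity $\partial_{\bar \ell} g_{j\bar k} = \partial_{\bar k} g_{j\bar \ell}$ gives $\bar\partial Z^\flat = 0$, and a direct check produces $(\partial^\sharp f)^\flat = \bar\partial f$; therefore condition $(iii)$ is equivalent to the vanishing of the Dolbeault class $[Z^\flat] \in H^{0,1}_{\bar\partial}(X)$. Writing the Hodge decomposition $Z^\flat = h + \bar\partial f$, by K\"ahler Hodge theory the harmonic part has the form $h = \bar\eta$ for some $\eta \in H^0(X; \Omega^1_X)$, and a pointwise Hermitian inner-product computation yields $\langle Z^\flat, \bar\eta\rangle_{L^2} = \mathrm{Vol}_\omega \cdot \eta(Z)$ (using that $\eta(Z)$ is constant and that $\bar\partial f$ is $L^2$-orthogonal to all harmonic forms). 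Consequently $h = 0$ iff $\eta(Z) = 0$ for every holomorphic $\eta$, which through \eqref{eq:Alb-isom} is exactly $(ii)$.

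Finally, $(iii) \Rightarrow (i)$ is the delicate step that carries the geometric content of the theorem. The plan here is to invoke the structure theory of $\mathrm{Aut}_0(X)$ for compact K\"ahler manifolds (after Fujiki--Lieberman): the Albanese action exhibits $\mathrm{Aut}_0(X)$ as an extension of a compact complex subtorus of $\mathrm{Alb}(X)$ by a linear algebraic group $L$ whose Lie algebra is precisely the space characterized by $(ii)$. A Borel-type fixed point theorem for such $L$ acting on the compact K\"ahler $X$ then produces a common fixed point of the one-parameter subgroup $\exp(t\,\mathrm{Re}\,Z)$, at which $Z$ must vanish; an alternative and morally equivalent route is to apply an Atiyah--Bott holomorphic Lefschetz residue formula to $X$ or to a suitable Albanese fibre. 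The hard part is clearly this implication: the Hodge-theoretic machinery only produces the cohomological statement $[Z^\flat] = 0$, and promoting this to an actual geometric zero does not seem to follow from a Bochner identity or a maximum principle on $|Z|^2$, since $Z = \partial^\sharp f$ may involve a genuinely complex-valued potential $f$ whose critical points need not correspond to zeros of $Z$.
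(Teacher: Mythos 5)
Your steps (i) $\Rightarrow$ (ii) and (ii) $\Leftrightarrow$ (iii) are correct and essentially coincide with the paper's argument: the form $Z^\flat=g_{j\bar k}Z^jd\bar z^k$ is (up to a constant) the paper's $\sqrt{-1}\iota_Z\omega$, and the pairing with conjugates of holomorphic $1$-forms is exactly how the paper passes from tangency to the Albanese fibres to the existence of a potential.

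The genuine gap is (iii) $\Rightarrow$ (i), which you do not actually prove. Invoking the Fujiki--Lieberman structure of $\mathrm{Aut}_0(X)$, with the assertion that the Lie algebra of the linear part $L$ is ``precisely the space characterized by (ii)'' and that its one-parameter subgroups have fixed points, is essentially assuming a statement at least as deep as the one to be proven; and the tools you name do not close it. Borel's fixed point theorem needs an algebraic action on a complete algebraic variety, whereas here $X$ is only compact K\"ahler and the action only holomorphic, so it does not apply as stated. The Atiyah--Bott route also fails: since $Z$ contracts to zero against all holomorphic forms, the flow acts trivially on each $H^{0,q}(X)$, so the holomorphic Lefschetz number is just $\chi(\mathcal{O}_X)$, which can vanish (e.g.\ $\mathbb{CP}^1\times$ elliptic curve, where fields of type (iii) with zeros certainly exist), so nonvanishing cannot be used to force a fixed point; and ``a suitable Albanese fibre'' need not be smooth, so restricting there is not legitimate without further work.

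Moreover, your closing claim that no pointwise/elementary argument is available is exactly where the paper does something different and cheap. From $Z=\partial^\sharp f$ one gets $Z(\bar f)=g^{i\bar j}f_{,\bar j}\overline{f_{,\bar i}}=|Z|^2\ge 0$. Suppose $c:=\min_X|Z|^2>0$. Since $\Re Z$ and $J\Re Z$ commute and $X$ is compact, $Z$ integrates to a holomorphic map $F\colon\mathbb{C}\to X$ with $dF(\tfrac{d}{dz})=Z$ through any chosen point; set $h:=f\circ F$, which is bounded, while $\partial_z\bar h=Z(\bar f)\circ F\ge c$. Stokes on the disk $\Delta_r$ gives
\begin{equation*}
\frac{1}{2\pi r}\left|\int_{\partial\Delta_r}h\,dz\right|\;\gtrsim\;\frac{1}{r}\int_{\Delta_r}\partial_z\bar h\,dx\,dy\;\ge\;c\,r,
\end{equation*}
and letting $r\to\infty$ contradicts the boundedness of $h$ unless $c=0$, i.e.\ $Z$ has a zero. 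This Liouville-type argument is what your proposal is missing; without it (or a fully justified substitute), the cycle of implications is not closed.
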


\begin{proof}

We prove that {\itshape (i) implies (ii)}.
We want to prove that $Z$ is tangent to the fibres of the Albanese map, equivalently, $\mu(dA(Z))=0$ for any $\mu\in H^0(\mathrm{Alb}(X),\Omega^1_{\mathrm{Alb}(X)})$. By the isomorphism \eqref{eq:Alb-isom}, this is equivalent to show that $\eta(Z)=0$ for any $\eta \in H^0(X,\Omega^1_X)$, where $\eta=A^*\mu$.
We conclude by noticing that $\eta(Z)$ is a holomorphic function on $X$ compact, then it is constant, in fact, $\eta(Z)=0$ because $Z$ has non-empty zero-set.

We prove that {\itshape (ii) implies (iii)}.
Define the following $(0,1)$-form:
$$ \varphi:=\sqrt{-1}\iota_Z\omega \stackrel{\text{loc}}{=} \sqrt{-1} Z^kg_{k\bar j} d\bar z^j . $$
By the Cartan formula, we have $\mathcal{L}_{\Re Z}\omega=-\sqrt{-1}d\varphi$, where the right-hand side is a $(1,1)$-form since $Z$ is holomorphic: then $\overline\partial\varphi=0$.  We now use Hodge theory to construct the potential $f$. Take any $\alpha\in\mathcal{H}^{0,1}_{\omega}$, the space of harmonic $(0,1)$-form; set $\beta:=\bar\alpha$. Then $(\varphi,\alpha)_\omega=g^{k\bar j}\varphi_k\overline{\alpha_j} =\sqrt{-1}Z^j\beta_j=\sqrt{-1}\beta(Z)=0$ by the assumption that $Z$ is tangent to the fibres of the Albanese map. We have proven that $\varphi\perp\mathcal{H}^{0,1}_\omega$. By Hodge theory, we have $\varphi=\overline\partial f + \overline\partial^* h$. But $\|\overline\partial^*h\|_{L^2}^2=(\overline\partial^* h, \overline\partial^* h)_{L^2}=(h,\overline\partial(\varphi-\overline\partial f))_{L^2}=0$. So $\varphi=\overline\partial f$, and $Z=\partial^\sharp f$.

We prove that {\itshape (iii) implies (i)}. Take $Z=\partial^\sharp f$ holomorphic.
(Note that $f$ is complex-valued, and not just real-valued, so we cannot apply the maximum principle directly.)
We have that $Z(\bar f)=g^{i\bar j}f_{,\bar j}\overline{f_{,\bar i}}=|Z|^2\geq 0$. Set $c=\min|Z|^2>0$ and $C=\max|Z|^2$. We have to prove that $c=0$.
Fix $p\in X$. Construct $F\colon \mathbb C \to X^n$ with $F(0)=p$ and $dF(\frac{d}{dz})=Z$. Define $h:=f \circ F\colon \mathbb C \to \mathbb C$. We have $0\leq |h| \leq C$ and $\frac{d\bar h}{dz_j} = Z(\bar f)\geq c$. Apply the Stokes theorem to a disk $\Delta_r$ of radius $r>0$. We have $\frac{1}{2\pi}\int_0^{2\pi} h(r\exp(\sqrt{-1}\vartheta)) \exp(\sqrt{-1}\vartheta) d\vartheta = \frac{1}{2\pi r}\int_{\partial \Delta_r} h dz = \frac{2\sqrt{-1}}{8\pi r}\int\frac{d\bar h}{dz} dx dy \geq cr$. Letting $r\rightarrow +\infty$ we see that $c=0$, since the first integral is clearly bounded. 
\end{proof}

\begin{remark}
\begin{itemize}
 \item When $X^n$ is Fano, then $\mathfrak{h}=\mathfrak{h}_0$ (this follows either by $\mathrm{Alb}(X)$ being trivial by the vanishing result, or because $X$ is simply-connected).
 \item $\mathfrak{h}_0$ is an ideal in $\mathfrak{h}$ when the manifold is K\"ahler, but there are counterexamples in the non-K\"ahler case where it is not a linear subspace (consider for example the Hopf surface $\mathbb C^2\setminus 0 \slash \langle z \mapsto 2z\rangle$, with local coordinates $(z,w)\in\mathbb C^2$, then both $v=z\partial_z \in \mathfrak{h}_0$ and $t=w\partial_w \in \mathfrak{h}_0$, but $v+t \not \in \mathfrak h_0$).
\end{itemize}
\end{remark}
		
		Cleraly $Z=\partial^\sharp f$ is holomorphic if and only if $f \in \mbox{ker}(\bar \partial \partial^\sharp)^*\bar \partial \partial^\sharp$. Such fourth order operator is called \emph{Lichnerowicz operator} and by a (slightly tedious) computation can be show to be equal to  $$\mathbb{L}_\omega(\varphi):=\Delta_\omega^2\varphi + (\mathrm{Ric}(\omega), \partial\overline\partial\varphi)_\omega + \partial^\sharp \varphi(\mathrm{Sc}_\omega) , $$
		see {\itshape e.g.} \cite[page 59]{gabor-book}, \cite[page 63, Proposition 2.6.1]{gauduchon-book}.

In general, $\mathbb{L}_\omega$ is not a real operator, that is, $\overline{\mathbb{L}_\omega(\varphi)} \neq \mathbb{L}_\omega(\bar \varphi)$. We will define the complex conjugate
$$ \tilde{\mathbb L}_\omega(\varphi) := \overline{\mathbb{L}_\omega(\bar\varphi)} . $$
We have $\tilde{\mathbb{L}}_\omega(\varphi)-\mathbb{L}_\omega(\varphi)=\partial^\sharp\varphi(\mathrm{Sc}_\omega)-\overline\partial^\sharp\bar\varphi(\mathrm{Sc}_\omega)$.

\begin{proposition}
Let $(X^n,\omega)$ be a compact K\"ahler manifold.
Then the space of Killing vector fields with zeroes is given by
$$ \mathfrak{k}_0 = \left\{ J \mathrm{grad}_g\, f \;:\; f \in \mathcal{C}^\infty(X;\mathbb R) \text{ such that }\partial^\sharp f\in\mathfrak{h}_0 \right\} . $$
\end{proposition}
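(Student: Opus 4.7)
The plan is to prove both inclusions, the essential ingredient being the previous characterisation that $V^{(1,0)}\in\mathfrak h_0$ if and only if $V^{(1,0)}=\partial^\sharp F$ for some $F\in\mathcal C^\infty(X;\mathbb C)$, together with the basic identity $\iota_{J\,\mathrm{grad}_g f}\,\omega=-df$ valid for any real $f$ (an immediate consequence of $\omega(JX,\cdot)=-g(X,\cdot)$).

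For the inclusion $\supseteq$, given a real $f$ with $\partial^\sharp f\in\mathfrak h_0$, I would verify the three defining properties of $\mathfrak k_0$ for $V:=J\,\mathrm{grad}_g f$. Under the isomorphism $(TM,J)\cong(TM^{(1,0)},\sqrt{-1})$ one has $V^{(1,0)}=\sqrt{-1}\,\partial^\sharp f$, which is holomorphic by hypothesis; hence $V$ is real holomorphic, i.e.\ $\mathcal L_V J=0$. The identity $\iota_V\omega=-df$ yields $\mathcal L_V\omega=d\iota_V\omega=0$. Since $V$ preserves both $J$ and $\omega$, it preserves $g=\omega(J\,\cdot\,,\,\cdot\,)$, so $V\in\mathfrak k_g$; and any critical point of $f$, which exists by compactness of $X$, is a zero of $V$.

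For the inclusion $\subseteq$, I would start with $V\in\mathfrak k_0$. Killing on a compact K\"ahler manifold implies $V$ is real holomorphic (recalled at the start of Section~\ref{sec:matsushima}), so $V^{(1,0)}$ is a holomorphic $(1,0)$-vector field, and since $V$ has a zero, $V^{(1,0)}\in\mathfrak h_0$. The previous theorem then supplies $F\in\mathcal C^\infty(X;\mathbb C)$ with $V^{(1,0)}=\partial^\sharp F$; I decompose $F=u+\sqrt{-1}\,v$ into real and imaginary parts. The goal is to prove that $u$ is constant: for then $V^{(1,0)}=\sqrt{-1}\,\partial^\sharp v$, whence $V=2\,\mathrm{Re}\,V^{(1,0)}=J\,\mathrm{grad}_g v$, and one takes $f:=v$, noting that $\partial^\sharp f=-\sqrt{-1}\,V^{(1,0)}\in\mathfrak h_0$.

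The key computation that implements the remaining step is the one-line local identity $\iota_{V^{(1,0)}}\omega=\sqrt{-1}\,\bar\partial F$ (from $\omega=\sqrt{-1}\,g_{k\bar\ell}\,dz^k\wedge d\bar z^\ell$ and $V^{(1,0)}=g^{i\bar j}F_{,\bar j}\,\partial_{z_i}$); complex conjugation then gives $\iota_V\omega=\sqrt{-1}(\bar\partial F-\partial\bar F)$. Differentiating and using $\partial\bar\partial=-\bar\partial\partial$ produces $\mathcal L_V\omega=d\iota_V\omega=2\sqrt{-1}\,\partial\bar\partial u$, so the Killing condition $\mathcal L_V\omega=0$ is equivalent to $u$ being pluriharmonic. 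On compact K\"ahler the K\"ahler identity $\Delta_\omega u=2\,\mathrm{tr}_\omega\sqrt{-1}\,\partial\bar\partial u$ and the maximum principle force $u$ to be constant, concluding the proof. I expect the main obstacle to be precisely this sign- and $\sqrt{-1}$-bookkeeping: the delicate point is confirming that $\mathcal L_V\omega$ isolates $\partial\bar\partial\,\mathrm{Re}\,F$, rather than $\partial\bar\partial F$ or $\partial\bar\partial\bar F$ individually, so that the Killing condition collapses exactly to the vanishing of $\mathrm{Re}\,F$ up to a constant.
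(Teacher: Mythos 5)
Your proposal is correct and follows essentially the same route as the paper: for the inclusion $\supseteq$ you observe that $J\mathrm{grad}_g f$ is real holomorphic and Hamiltonian, hence Killing (with a zero coming for free), and for $\subseteq$ you use the potential characterization of $\mathfrak{h}_0$, Cartan's formula, and the resulting equation $\mathcal{L}_V\omega=2\sqrt{-1}\partial\overline\partial(\Re F)=0$ together with compactness to force one real component of the potential to be constant. The only cosmetic difference is that you apply the characterization directly to $V^{(1,0)}$ and kill $\Re F$, whereas the paper applies it to $JV$ (via $Z=JV+\sqrt{-1}V$) and kills the corresponding component there; the two bookkeepings are equivalent.
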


\begin{proof}
The inclusion "$\supseteq$" follows since $J\mathrm{grad}_g\, f$ is a real holomorphic vector field being also Hamiltonian, whence it is Killing.
The inclusion "$\subseteq$" is as follows. Take $X\in\mathfrak{k}_0$. Then $JX\in\mathfrak{h}_0$. Consider $Z=JX+\sqrt{-1}X$ with $\overline\partial Z=0$. We know that $Z=\partial^\sharp f$ where $f\in\mathcal{C}^\infty(X;\mathbb C)$, say $f=u+\sqrt{-1}v$. Then $\Re\partial^\sharp f=\mathrm{grad}_g\,  u + J \mathrm{grad}_g\,  v$. Then $X=-\mathrm{grad}_g\,  v+J \mathrm{grad}_g\,  u$. We claim that $\mathrm{grad}_g\, v=0$. By Cartan, $0=\mathcal{L}_X\omega = d\iota_{-\mathrm{grad}_g\, v + J \mathrm{grad}_g\, u}(\omega)=d g (-J\mathrm{grad}_g\, v,\_)-d({g(\mathrm{grad}_g\, u,\_)})=dJd v=2\sqrt{-1}\partial\overline\partial v$. Since $X$ is compact, then $v$ is constant.
\end{proof}

\begin{theorem}[{Matsushima \cite{matsushima}, Lichnerowicz \cite{lichnerowicz}}]

Let $\omega$ be a cscK metric on $X^n$ compact complex manifold. Then
$$ \mathfrak{h}_0 = \mathfrak{k}_0 \oplus J\mathfrak{k}_0 . $$
\end{theorem}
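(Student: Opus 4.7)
The plan is to leverage the earlier characterization of $\mathfrak{h}_0$, namely $Z^{(1,0)}=\partial^\sharp f$ for some $f\in\mathcal{C}^\infty(X;\mathbb{C})$ (equivalently, $f\in\ker\mathbb{L}_\omega$), together with the observation that the cscK hypothesis forces $\mathbb{L}_\omega$ to be a \emph{real} operator. First I would dispose of the easy inclusion $\mathfrak{k}_0+J\mathfrak{k}_0\subseteq\mathfrak{h}_0$: on a compact K\"ahler manifold every Killing field is real-holomorphic (as used in the excerpt, since $\mathrm{Isom}_0$ preserves the harmonic form $\omega$), and $\nabla J=0$ gives $\mathcal{L}_{JV}J=0$ whenever $\mathcal{L}_V J=0$, so $JV$ is real-holomorphic as well; the zero sets of $V$ and $JV$ coincide, so both lie in $\mathfrak{h}_0$.

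For the main inclusion $\mathfrak{h}_0\subseteq\mathfrak{k}_0+J\mathfrak{k}_0$, take $Z\in\mathfrak{h}_0$ and write $Z^{(1,0)}=\partial^\sharp f$ with $f=u+\sqrt{-1}\,v$, $u,v\in\mathcal{C}^\infty(X;\mathbb{R})$. The formula already recorded in the excerpt,
$$
\tilde{\mathbb{L}}_\omega(\varphi)-\mathbb{L}_\omega(\varphi)=\partial^\sharp\varphi(\mathrm{Sc}_\omega)-\overline\partial^\sharp\overline\varphi(\mathrm{Sc}_\omega),
$$
vanishes identically when $\mathrm{Sc}_\omega$ is constant, which gives $\overline{\mathbb{L}_\omega\varphi}=\mathbb{L}_\omega\overline\varphi$ for every $\varphi$. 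From $\mathbb{L}_\omega f=0$ I deduce $\mathbb{L}_\omega\overline f=0$, so by $\mathbb{C}$-linearity $\mathbb{L}_\omega u=\mathbb{L}_\omega v=0$; equivalently, both $\partial^\sharp u$ and $\partial^\sharp v$ are holomorphic $(1,0)$-fields (and have zeros, since $\mathrm{grad}_g u$ and $\mathrm{grad}_g v$ vanish at the extrema of $u$ and $v$). Using the pointwise identity $\partial^\sharp h=\tfrac{1}{2}(\mathrm{grad}_g h-\sqrt{-1}\,J\,\mathrm{grad}_g h)$ valid for real $h$, I decompose
$$
Z=2\Re\,(\partial^\sharp f)=\mathrm{grad}_g u+J\,\mathrm{grad}_g v.
$$
The preceding proposition now yields $J\,\mathrm{grad}_g u,\,J\,\mathrm{grad}_g v\in\mathfrak{k}_0$; since $\mathrm{grad}_g u=J(-J\,\mathrm{grad}_g u)\in J\mathfrak{k}_0$ and $J\,\mathrm{grad}_g v\in\mathfrak{k}_0$, this places $Z$ in $\mathfrak{k}_0+J\mathfrak{k}_0$.

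Finally I would upgrade the sum to a direct sum by showing $\mathfrak{k}_0\cap J\mathfrak{k}_0=\{0\}$. If $W\in\mathfrak{k}_0\cap J\mathfrak{k}_0$, write $W=J\,\mathrm{grad}_g u$ via the proposition; then $JW=-\mathrm{grad}_g u$ is Killing as well, so $\mathcal{L}_{\mathrm{grad}_g u}g=2\,\mathrm{Hess}_g(u)=0$, and tracing yields $\Delta_g u=0$; hence $u$ is constant on the compact connected $X$ and $W=0$. The only real obstacle I foresee is not analytic but bookkeeping: keeping the $(1,0)$-picture (where $\mathbb{L}_\omega$ lives) consistently aligned with the real-vector-field picture of Killing fields, so that the decomposition matches the proposition's characterization of $\mathfrak{k}_0$ without sign errors. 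Once one accepts the excerpt's formula for $\tilde{\mathbb{L}}_\omega-\mathbb{L}_\omega$, the remainder of the proof is essentially algebraic.
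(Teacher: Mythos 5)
Your argument is correct and follows essentially the same route as the paper: the cscK hypothesis makes $\mathbb{L}_\omega$ real, so $\mathbb{L}_\omega(\Re f)=\mathbb{L}_\omega(\Im f)=0$, and the previous proposition characterizing $\mathfrak{k}_0$ then yields the decomposition $Z=\mathrm{grad}_g u+J\,\mathrm{grad}_g v$. Your explicit verifications of the easy inclusion and of $\mathfrak{k}_0\cap J\mathfrak{k}_0=\{0\}$ (via $\mathrm{Hess}_g u=0$ and harmonicity) are correct details that the paper leaves implicit.
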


\begin{proof}
Clearly, $\mathfrak{k}_0 \oplus J\mathfrak{k}_0 \subseteq \mathfrak{h}_0$. Take $Z=\partial^\sharp f$ with $\mathbb{L}_{\omega}(f)=0$. Since $\omega$ is cscK, then $\mathbb L=\tilde{\mathbb L}_\omega$ is a real operator. Then $\mathbb{L}_\omega(\Re f)=\mathbb{L}_\omega(\Im f)=0$. Then we can conclude by the previous proposition.
(See also {\itshape e.g.} \cite[Theorems 3.6.1-2]{gauduchon-book}, \cite[Proposition 4.18]{gabor-book}).
\end{proof}

\begin{remark}
By Bochner \cite{bochner}, if $V$ is Killing, then $\frac{1}{2}\Delta_g |V|^2 = - \mathrm{Ric}(g)(V,V)+|\nabla V|^2$. Thus:
\begin{itemize}
\item if $\mathrm{Ric}(\omega)=0$, then $\mathfrak{h}=\mathfrak{a}$;
\item if $\mathrm{Ric}(\omega)=-\omega$, then $\mathfrak{h}=0$.
\end{itemize}
\end{remark}

\begin{remark}
Recall that $\mathrm{Isom}(M,g)$ is always a compact Lie group \cite{myers-steenrod}.
In particular, $\mathfrak{h}_0$ is reductive.
\end{remark}

\begin{example}
Consider $\hat X^2 = \mathrm{Bl}_p\mathbb{CP}^2$, that is, locally on $U\ni p$, it is $\hat X \stackrel{\text{loc}}{=} \{((z,w),[u:v]) \in U \times \mathbb{CP}^1 \;:\; zv-wu=0\}$. We have $K_{\hat X}=\pi^* K_{\mathbb{CP}^2} \otimes \mathcal{O}(E)$, where $E\simeq \mathbb{CP}^1$ is the exceptional divisor. $K^{-1}_{\hat X}$ is still ample. Then
$$ \mathrm{Aut}(\hat X) = \left. \left\{\left(\begin{matrix}
\star & \star & \star \\
0 & \star & \star \\
0 & \star & \star
\end{matrix}\right)\right\} \middle\slash \mathbb C^* \right. $$
and
$$ \mathfrak{h}(\hat X) = \left\{\left(\begin{matrix}
0 & \star & \star \\
0 & \star & \star \\
0 & \star & \star
\end{matrix}\right)\right\} \simeq \mathfrak{gl}(2;\mathbb C) \oplus \mathbb C^2 $$
which cannot be given by a complexification of a compact Lie group.
Then, by the Matsushima-Lichnerowicz theorem, $\hat X$ is an example of a Fano manifold that does not admit cscK metric, in particular, does not admit K\"ahler-Einstein metrics.
\end{example}

Even if the blow-up in a point of $\C \mathbb{P}^2$ is not KE, such space is related to two important (positive) Einstein metrics.

\begin{itemize}
\item The \emph{Page metric} \cite{page} (see {\itshape e.g.} \cite[9.125]{besse})  constructed as $g_P=\mathrm{Sc}_{\text{extr}}^{-2}g_{\text{extr}}$ where $g_{\text{extr}}$ is the explicit Calabi's extremal metric (that is, with holomorphic grandient of the scalar curvature) in $2\pi c_1(X)$.
\item The $Y^{2,1}$ irregular Sasaki-Einstein metric constructed  on the smooth bundle  $S^1 \hookrightarrow K \to \mbox{Bl}_p \C \mathbb{P}^2 $ by  deforming the natural $\C^\ast$-action. This metric was first constructed by physicists studying the AdS-CFT correspondence \cite{martelli-sparks}.
\end{itemize}

\subsection{Futaki invariant}\label{sec:futaki}
Let $(X^n,\omega)$ be a compact K\"ahler metric. Set $\hat S$ the value of the eventual constant scalar curvature of some metric in $[\omega]$. Note that $\hat S$ is a cohomological invariant depending just on $c_1(X)$ and $[\omega]$, determined by
$$[\omega]^n\hat S = \int_X \mathrm{Sc}_\omega \omega^n= n  \int_X Ric(\omega) \wedge \omega^{n-1}=2\pi n c_1(X).[\omega]^{n-1}. $$
For $Z=\partial^\sharp f$, define
\begin{equation}\label{eq:futaki}
F(Z,\omega) := \int_X f (\mathrm{Sc}_\omega-\hat S) \omega^n .
\end{equation}

\begin{theorem}[{Futaki \cite{futaki}}]
If $\omega$ and $\omega'$ are cohomologous, then $F(Z,\omega)=F(Z,\omega')$. Therefore $F(Z,\omega)=F(Z,[\omega])$. In particular, if there exists $\omega'\in[\omega]$ cscK, then $F=0$.
\end{theorem}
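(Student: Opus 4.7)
The plan is to prove that $F(Z,\omega)$ depends only on the class $[\omega]\in H^2(X;\mathbb{R})$ by showing that $\frac{d}{dt}F(Z,\omega_t)\equiv 0$ along any smooth path of K\"ahler forms in $[\omega]$. By the $\sqrt{-1}\partial\overline\partial$-Lemma, any other K\"ahler form $\omega'\in[\omega]$ equals $\omega+\sqrt{-1}\partial\overline\partial\psi$ for some real $\psi\in\mathcal{C}^\infty(X;\mathbb R)$, so it suffices to work with the affine path $\omega_t:=\omega+t\sqrt{-1}\partial\overline\partial\psi$, $t\in[0,1]$. Once cohomological invariance is established, the ``in particular'' statement is immediate: on a cscK representative $\omega'\in[\omega]$ one has $\mathrm{Sc}_{\omega'}\equiv\hat S$ pointwise, so the integrand in \eqref{eq:futaki} vanishes and $F(Z,\omega')=0$.

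First I would track the variation of the potential. The defining equation $Z=\partial^\sharp_{\omega_t}f_t$ is equivalent to $\overline\partial f_t=\sqrt{-1}\iota_Z\omega_t$, and using that $Z\in T^{1,0}X$ is holomorphic one computes $\iota_Z(\sqrt{-1}\partial\overline\partial\psi)=\sqrt{-1}\,\overline\partial(Z\psi)$, whence $\overline\partial f_t=\overline\partial f_0-t\,\overline\partial(Z\psi)$. I may therefore take $f_t:=f_0-t(Z\psi)$, so that $\dot f_t=-Z\psi$. The additive constant in $f_t$ is irrelevant since $\int_X(\mathrm{Sc}_{\omega_t}-\hat S)\omega_t^n=0$ by Chern-Weil, the value $\hat S$ being cohomological.

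Differentiating $F$ along the path gives three terms,
$$
\frac{d}{dt}F(Z,\omega_t)=\int_X\dot f_t(\mathrm{Sc}_t-\hat S)\omega_t^n
+\int_X f_t\,\dot{\mathrm{Sc}}_t\,\omega_t^n
+\int_X f_t(\mathrm{Sc}_t-\hat S)\,\Delta_t\psi\,\omega_t^n,
$$
using $\dot\omega_t^n=\Delta_t\psi\cdot\omega_t^n$. For the middle term I would invoke the standard variation formula $\dot{\mathrm{Sc}}_t=-\Delta_t^2\psi-(\mathrm{Ric}(\omega_t),\sqrt{-1}\partial\overline\partial\psi)_{\omega_t}$, which rearranges to $\dot{\mathrm{Sc}}_t=-\mathbb{L}_{\omega_t}\psi+\partial^\sharp_{\omega_t}\psi(\mathrm{Sc}_t)$ in terms of the Lichnerowicz operator $\mathbb L_{\omega_t}=(\overline\partial\partial^\sharp)^\ast\overline\partial\partial^\sharp$ recalled above.

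The key point is that $Z=\partial^\sharp_{\omega_t}f_t$ is holomorphic, so $\overline\partial\partial^\sharp_{\omega_t}f_t=0$ and hence $\mathbb{L}_{\omega_t}f_t=0$. Using the factorization of $\mathbb L$ and integrating by parts, the term $\int_Xf_t\,\mathbb{L}_{\omega_t}\psi\,\omega_t^n$ transfers onto $\mathbb{L}_{\omega_t}f_t$ and vanishes, up to a first-order correction arising from the difference $\tilde{\mathbb L}_{\omega_t}-\mathbb{L}_{\omega_t}$ noted in the excerpt. That correction, the $\partial^\sharp_t\psi(\mathrm{Sc}_t)$ piece from $\dot{\mathrm{Sc}}_t$, the $\dot f_t=-Z\psi$ piece, and the volume-form piece $f_t(\mathrm{Sc}_t-\hat S)\Delta_t\psi$ should be regrouped via integration by parts as integrals of Lie derivatives of $(\mathrm{Sc}_t-\hat S)\omega_t^n$ along $\Re Z$, which vanish by the Stokes theorem since $Z$ is holomorphic. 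The main obstacle is exactly this bookkeeping: one must carefully track the complex-valued $f_t$ against the real $\psi$, keeping the self-adjoint and skew parts of $\mathbb L$ separate, in order to witness the precise pairwise cancellation. Once this is verified $\frac{d}{dt}F(Z,\omega_t)\equiv 0$, and integrating from $t=0$ to $t=1$ yields $F(Z,\omega)=F(Z,\omega')$.
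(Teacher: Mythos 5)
You are following the paper's own route---differentiate $F(Z,\omega_t)$ along the affine path, plug in the variations of $f_t$, $\mathrm{Sc}_t$, $\omega_t^n$, and use holomorphy of $Z$ through the Lichnerowicz operator---but the step you postpone as ``bookkeeping'' is where the proof actually lives, and as you have set it up it does not close. First, the sign of $\dot f_t$ is wrong: with your own normalization $Z=\partial^\sharp_{\omega_t}f_t$, i.e. $(f_t)_{,\bar j}=(g_t)_{i\bar j}Z^i$, differentiating in $t$ and using $Z^i_{,\bar j}=0$ gives $\partial_{\bar j}\dot f_t=Z^i\psi_{,i\bar j}=\partial_{\bar j}(Z\psi)$, hence $\dot f_t=+Z(\psi)$ up to a constant; equivalently $\iota_Z\omega_t=\sqrt{-1}\,\overline\partial f_t$, not $\overline\partial f_t=\sqrt{-1}\,\iota_Z\omega_t$ as you assert. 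This is not cosmetic, because the whole argument is a cancellation of terms whose relative signs matter. Writing $h:=\mathrm{Sc}_\omega-\hat S$, the non-Lichnerowicz terms cancel by a single integration by parts,
\begin{equation*}
\int_X f\,h\,\Delta_\omega\psi\,\omega^n=-\int_X g^{i\bar j}\,(f h)_{,\bar j}\,\psi_{,i}\,\omega^n=-\int_X\bigl(Z(\psi)\,h+f\,\partial^\sharp\mathrm{Sc}_\omega(\psi)\bigr)\,\omega^n,
\end{equation*}
which requires the term $+\int_X Z(\psi)\,h\,\omega^n$ coming from $\dot f_t=+Z(\psi)$; one is then left with $\dot F=-\int_X f\,\tilde{\mathbb L}_\omega\psi\,\omega^n=-(f,\mathbb L_\omega\psi)_{L^2}=-(\mathbb L_\omega f,\psi)_{L^2}=0$. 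With your $\dot f_t=-Z(\psi)$ the same computation leaves $\dot F=-2\int_X Z(\psi)\,h\,\omega^n$, which has no reason to vanish for a general potential $\psi$.

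Second, the vanishing mechanism you invoke for the leftover terms---regrouping them as integrals of $\mathcal L_{\Re Z}\bigl((\mathrm{Sc}_t-\hat S)\omega_t^n\bigr)$ and applying Stokes---is not what closes the argument: the divergence that actually appears is that of $f\,h\,\partial^\sharp\psi$ (the identity displayed above), not a Lie derivative along $\Re Z$. Relatedly, writing $\dot{\mathrm{Sc}}_t=-\mathbb L_{\omega_t}\psi+\partial^\sharp\psi(\mathrm{Sc}_t)$ and then moving $\mathbb L$ onto $f_t$ produces $\int_X\psi\,\tilde{\mathbb L}_{\omega_t}f_t\,\omega_t^n$, and $\tilde{\mathbb L}f\neq0$ in general (holomorphy of $Z$ only gives $\mathbb L f=0$), so the skew correction $(\tilde{\mathbb L}-\mathbb L)f$ must then be cancelled against the gradient terms by hand---you acknowledge this but never verify it. The clean way, which is the paper's, is to use $\dot{\mathrm{Sc}}_t=-\tilde{\mathbb L}_{\omega_t}\psi+\partial^\sharp\mathrm{Sc}_t(\psi)$, so that the Lichnerowicz term pairs Hermitianly with the complex-valued $f$ and dies on $\mathbb L_\omega f=0$, while the remaining three terms cancel by the displayed identity. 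Once the sign of $\dot f_t$ is fixed and this cancellation is carried out, your argument becomes exactly the paper's proof; your reduction to affine paths via the $\sqrt{-1}\partial\overline\partial$-Lemma and the cscK consequence are fine as stated.
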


\begin{proof}
Note that the space of K\"ahler metrics in $[\omega]$ is connected. Take $(\omega_t)_t$ a path of metrics in $[\omega]$, for example of the form $\omega_t=\omega+\sqrt{-1}\partial\overline\partial(t\varphi)$. We show that $\dot F(Z,\omega_t):=\frac{d}{dt}\lfloor_{t=0}F(Z,\omega_t)=0$. We compute:
\begin{itemize}
\item $\dot f_t=\partial^\sharp f(\varphi)$:\\
Indeed, $\overline\partial \dot f_t = \overline\partial(Z^j\varphi_{,j})$ since $Z$ is holomorphic;
\item $\dot{\mathrm{Sc}}_{\omega_t} = -\tilde{\mathbb L}_\omega(\varphi)+\partial^\sharp \mathrm{Sc}_\omega(\varphi)$:\\
indeed, $\mathrm{Ric}({\omega_t})=\mathrm{Ric}(\omega)-\sqrt{-1}t\partial\overline\partial\Delta\varphi+o(t)$, then $\mathrm{Sc}_{\omega_t}=g_t^{i\bar j}(\mathrm{Ric}({\omega_t}))_{i\bar j} = (g^{i\bar j}-tg^{i\bar \ell}g^{k\bar j} \varphi_{,k\bar \ell})(\mathrm{Ric}_{i\bar j}-t(\Delta\varphi)_{,i\bar j})+o(t)=\mathrm{Sc}_\omega-t\Delta^2\varphi-tg^{i\bar \ell}g^{k\bar j}\varphi_{,k\bar \ell}\mathrm{Ric}_{i\bar j}+o(t)=\mathrm{Sc}_\omega-t\tilde{\mathbb L}_{\omega}+t\partial^\sharp \mathrm{Sc}_\omega(\varphi)+o(t)$;
\item $\dot\omega_t^n=\Delta \varphi\omega^n$.
\end{itemize}
Thus
\begin{eqnarray*}
\dot F_t &=&
\int_X (\partial^\sharp f(\varphi) (\mathrm{Sc}_\omega-\hat S)- f \bar{\mathbb L}_\omega(\varphi)+f \partial^\sharp \mathrm{Sc}_\omega(\varphi) + f(\mathrm{Sc}(\omega)-\hat S)\Delta\varphi) \omega^n \\
&=& - \int_X f \bar{\mathbb{L}}_\omega(\varphi)\omega^n = - (f, \mathbb{L}_\omega(\varphi))_{L^2} \\
&=& - (\mathbb{L}_\omega(f), \varphi)_{L^2} = 0 ,
\end{eqnarray*}
by integrating by part and by using that $\mathbb{L}_\omega(f)=0$, since $f$ is holomorphic. 
\end{proof}

The  Futaki invariant will be  central in the definition of the notion of K-stability, see Section \ref{sec:K-stab-variational-pov}.

\begin{remark}[Other equivalent formulations of the Futaki invariant]
Let $\psi$ a solution of $\mathrm{Sc}_\omega-\hat S = \Delta_\omega \psi$, so, for $Z=\partial^\sharp f$, we have
\begin{eqnarray*}
F(Z,[\omega])
&=& \int_X f (\mathrm{Sc}(\omega)-\hat S) \omega^n \\
&=& (f, \overline\partial^*\overline\partial\psi) = (\overline\partial f, \overline\partial\psi) \\
&=& \int_X \partial^\sharp f (\psi) = \int_X Z(\psi) \omega^n
\end{eqnarray*}
so that we can write
$$ F(Z,[\omega])=\int_X Z(\psi) \omega^n . $$
\end{remark}

\subsection{Mabuchi functional and evidences for uniqueness}\label{sec:mabuchi}

On the space $\mathcal{K}_{[\omega]}=\{\varphi \;:\; 	\omega_\varphi:=\omega+\sqrt{-1}\partial\overline\partial\varphi>0\}$ of K\"ahler metrics in $[\omega]$, consider the following $1$-form:
$$ \alpha_\varphi(\psi) := \int_X \psi (\hat S - \mathrm{Sc}_{\omega_\varphi}) \omega_\varphi^n .$$

We compute
\begin{eqnarray*}
\frac{d}{dt}(\alpha_{\varphi+t\psi_2}(\psi_1))
&=& \int (\psi_1(\mathbb{L}_{\omega_\varphi}\psi_2-\partial^\sharp\psi_2(\mathrm{Sc}_{\omega_\varphi}))+\psi_1(\hat S-\mathrm{Sc}_{\omega_\varphi})\Delta_{\omega_\varphi}\psi_2)\omega_\varphi^n \\
&=& \int_X (\psi_1 \mathbb{L}_{\omega_\varphi} \psi_2 - (\hat S - \mathrm{Sc}_{\omega_\varphi})\partial^\sharp\psi_1(\psi_2))
\end{eqnarray*}
which is symmetric in $\psi_1$ and $\psi_2$, since they are both real.
Then
$$ d\alpha=0 . $$
Then there exists $\mathcal{M}\colon \mathcal{K}_{[\omega]} \to \mathbb R$ such that
$$ \alpha=d\mathcal{M} . $$
Such $\mathcal{M}$ is called {\em Mabuchi K-energy}.
(See also {\itshape e.g.} \cite[Section 4.10]{gauduchon-book}, \cite[Proposition 4.23]{gabor-book}.)

The {\em Mabuchi metric} \cite{mabuchi, semmes, donaldson-AMST} on $\mathcal{K}_{[\omega]}$ is defined as
$$ \langle \psi_1, \psi_2 \rangle_{\varphi} := \int_X \psi_1 \psi_2 \omega_\varphi^n . $$
We look at the geodesics for such metric: let $(\varphi_t)_t$ a path in $\mathcal{K}_{[\omega]}$ connecting $\varphi_0=0$ and $\varphi_1=\varphi$, and consider the energy functional
$$ E(\varphi) = \int_0^1 \int_X \dot\varphi_t^2 \omega_{\varphi_t}^n dt . $$
We look at geodesics, that is, critical point of $E$. We compute
$$ \left.\frac{d}{ds}\right\lfloor_{s=0} E(\varphi+s\psi) = \int_0^1 \int_X -2 \psi_t (\ddot \varphi_t -\partial^\sharp \dot \varphi_t (\dot \varphi_t)) \omega_{\varphi_t}^n $$
so the geodesic equation is
$$ \ddot \varphi_t - |\partial\dot\varphi_t|^2_{\omega_{\varphi_t}} = 0 . $$

\begin{remark}\label{rmk:donaldson-semmes-chma}
Consider now $\hat X = X \times [0,1] \times \mathbb{S}^1 $. Donaldson \cite{donaldson-AMST} and Semmes \cite{semmes} observed that, by extending
$$ \Phi(p,t,s):=\varphi_t(p)$$
then the geodesic equation can be given as
$$ (\omega_0+\sqrt{-1}\partial\overline\partial\phi)^{n+1}=0 $$
on $\hat X$, from which one can use pluripotential theory to study geodesics \cite{chen-JDG00}.
\end{remark}

\begin{lemma}
The Mabuchi functional is convex along smooth geodesics.
\end{lemma}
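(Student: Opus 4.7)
The plan is to differentiate $\mathcal{M}$ twice along a smooth geodesic $(\varphi_t)_t\subset \mathcal{K}_{[\omega]}$, use the geodesic equation to cancel the indefinite pieces, and recognize the remaining expression as a non-negative $L^2$-norm.

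First, from $d\mathcal{M}=\alpha$ one immediately has
\[
\frac{d}{dt}\mathcal{M}(\varphi_t)=\alpha_{\varphi_t}(\dot\varphi_t)=\int_X \dot\varphi_t(\hat S-\mathrm{Sc}_{\omega_{\varphi_t}})\,\omega_{\varphi_t}^n.
\]
Differentiating once more, I would split the contributions into the ``$\dot\varphi_t\mapsto \ddot\varphi_t$'' part and the ``change of base K\"ahler form'' part; the latter is exactly the first variation of $\alpha$ computed in the definition of $\mathcal{M}$ just above the statement, specialized to $\psi_1=\psi_2=\dot\varphi_t$. This gives
\[
\frac{d^2}{dt^2}\mathcal{M}(\varphi_t)=\int_X \ddot\varphi_t(\hat S-\mathrm{Sc}_{\omega_{\varphi_t}})\,\omega_{\varphi_t}^n +\int_X \dot\varphi_t\,\mathbb{L}_{\omega_{\varphi_t}}\dot\varphi_t\,\omega_{\varphi_t}^n -\int_X(\hat S-\mathrm{Sc}_{\omega_{\varphi_t}})\,\partial^\sharp\dot\varphi_t(\dot\varphi_t)\,\omega_{\varphi_t}^n.
\]

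The decisive observation is the local identity $\partial^\sharp f(f)=g^{i\bar j}f_{,\bar j}f_{,i}=|\partial f|^2_\omega$ valid for real $f$. Combined with the geodesic equation $\ddot\varphi_t=|\partial\dot\varphi_t|^2_{\omega_{\varphi_t}}$, it makes the first and third integrals coincide and cancel. Since $\mathbb L_\omega=(\bar\partial\partial^\sharp)^*(\bar\partial\partial^\sharp)$, as recalled just after the Matsushima--Lichnerowicz theorem, the surviving term becomes
\[
\frac{d^2}{dt^2}\mathcal{M}(\varphi_t)=\int_X \dot\varphi_t\,\mathbb L_{\omega_{\varphi_t}}\dot\varphi_t\,\omega_{\varphi_t}^n=\|\bar\partial\partial^\sharp\dot\varphi_t\|^2_{L^2(\omega_{\varphi_t})}\ge0,
\]
which is the desired convexity.

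I do not anticipate any genuine obstacle: the argument is essentially bookkeeping once the first variation of $\alpha$ (already in hand) and the factorization $\mathbb L=(\bar\partial\partial^\sharp)^*(\bar\partial\partial^\sharp)$ are available. The one nuance worth flagging is that along a generic geodesic $\omega_{\varphi_t}$ is not cscK, so $\mathbb L_{\omega_{\varphi_t}}\neq \tilde{\mathbb L}_{\omega_{\varphi_t}}$ in general; still, the operator remains a non-negative self-adjoint square, so $\int_X f\,\mathbb L_\omega f\,\omega^n\geq 0$ for every real $f$, which is all that is required.
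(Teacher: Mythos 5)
Your proposal is correct and follows essentially the same route as the paper: differentiate $\alpha_{\varphi_t}(\dot\varphi_t)$, reuse the first variation of $\alpha$ with $\psi_1=\psi_2=\dot\varphi_t$, cancel the indefinite term $(\hat S-\mathrm{Sc}_{\omega_{\varphi_t}})(\ddot\varphi_t-|\partial\dot\varphi_t|^2_{\omega_{\varphi_t}})$ via the geodesic equation, and identify the remaining term with $\|\overline\partial\partial^\sharp\dot\varphi_t\|^2_{L^2(\omega_{\varphi_t})}\geq 0$ using $\mathbb L=(\overline\partial\partial^\sharp)^*\overline\partial\partial^\sharp$. Your remark that no cscK assumption (hence no reality of $\mathbb L_{\omega_{\varphi_t}}$) is needed, only non-negativity of the pairing on real functions, is accurate and matches the paper's computation.
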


\begin{proof}
Let $(\varphi_t)_t$ be a geodesic.
We compute
\begin{eqnarray*}
\frac{d^2}{dt^2}\mathcal{M}(\varphi_t)
&=& \frac{d}{dt} \int_X \dot \varphi_t (\hat S - \mathrm{Sc}_{\omega_{\varphi_t}}) \omega_{\varphi_t}^n \\
&=& \int_X (\ddot \varphi_t (\hat S - \mathrm{Sc}_{\omega_{\varphi_t}}) + \dot\varphi_t (\mathbb{L}_\omega(\varphi_t)-\partial^\sharp\varphi_t(\mathrm{Sc}_{\omega_{\varphi_t}})) + \dot\varphi_t(\hat S-\mathrm{Sc}_{\omega_{\varphi_t}})\Delta_{\omega_{\varphi_t}}\dot\varphi_t)\omega_{\omega_{\varphi_t}}^n \\
&=& \int_X (|\overline\partial\partial^\sharp\dot\varphi_t|_{\omega_{\varphi_t}}^2+(\hat S -\mathrm{Sc}_{\omega_{\varphi_t}})(\ddot\varphi_t-|\partial\dot\varphi_t|^2))\omega_{\omega_{\varphi_t}}^n \\
&=& \int |\overline\partial\partial^\sharp\dot\varphi_t|_{\omega_{\varphi_t}}^2\omega_{\omega_{\varphi_t}}^n \geq 0 .
\end{eqnarray*}
\end{proof}

Now, assume that any $\varphi_1$, $\varphi_2$ can be connected by smooth geodesics \cite{donaldson-AMST}. The previous result would then yield uniqueness of cscK metrics. Indeed, if there existed two cscK metrics, by convexity we get that $\overline\partial\partial^\sharp\varphi_t=0$, and then there exists $F$ automorphism such that $F^*\omega_2=\omega_1$ ({\itshape i.e.} the metric would be unique up to automorphisms). But the assumption on the regularity of geodesics is false: the optimal regularity is $C^{1,1}$ as proven by \cite{darvas-lempert}, which follows previous works \cite{donaldson-JSG, lempert-vivas}. In any case, such approach turned out to be the correct one, at the price of working with weaker regularity \cite{chu-tosatti-weinkove, berman-berndtsson}. In the KE Fano case, the first proof of uniqueness (up to automorphisms) was given by Bando and Mabuchi with different techniques \cite{bando-mabuchi}.

\section{A criterion for the existence of K\"ahler-Einstein metrics on Fano manifolds}

We try to construct a K\"ahler-Einstein metric in $2\pi c_1(X)>0$, {\itshape i.e.}, $\mathrm{Ric}(\omega)=\omega$. This is equivalent to solve the Monge-Ampère \eqref{eq:eq-ke} with $\lambda=1$, namely, the equation
\begin{equation}\label{eq:ke-fano}
\omega_\varphi^n = \exp(f-\varphi)\omega^n
\end{equation}
for $\omega_\varphi:=\omega+\sqrt{-1}\partial\overline\partial\varphi>0$,
where $f$ is the Ricci potential normalized such that $\dashint_X \exp f \omega^n=1$.

We set up the continuity method along the {\em Aubin path}
\begin{equation}\label{eq:ke-fano-continuity}
\omega_{\varphi_t}^n = \exp(f-t\varphi)\omega^n
\end{equation}
varying $t\in[0,1]$.
This equation is equivalent to asking
$$ \mathrm{Ric}(\omega_{\varphi_t}) = (1-t) \omega + t \omega_{\varphi_t} \geq t \omega_{\varphi_t} > 0 , $$
Note that we (crucially!)  get a lower bound on the Ricci curvature along the path.

\begin{remark}
Define the {\em Székelyhidi invariant} \cite{tian-IJM, gabor-CM} of $(X^n,\omega)$ as
$$ R(X) := \sup \{ t \in [0,1] \;:\; \mathrm{Ric}(\omega_{\varphi_t}) \geq t \omega_t \} \in (0,1] , $$
where $(\omega_t)_t$ is the Aubin path starting at the background metric $\omega$. In fact, $R(X)$ does not depend on the background $\omega$ \cite[Theorem 1]{gabor-CM}. For example, $R(\mathrm{Bl}_p\mathbb{CP}^2)=\frac{6}{7}$.  Explicit computations for toric manifolds can be found in \cite{li-AdvMath}. It was conjectured \cite[Conjecture 13]{gabor-CM} that $R(X)=1$ if and only if $K$-semi-stable. Such conjecture has been proven by C. Li \cite{li-CCM}.
\end{remark}

Solution for $t=0$ the solvability of the Aubin's path follows by first Calabi's conjecture, Theorem \ref{thm:1st-calabi-conj}. Openness is studies by means of the linearization
$$ D \mathcal{E}(\_,t) = \Delta_{\omega_{\varphi_t}}+t , $$
with the same notation as at page \pageref{page:open-2nd-calabi}. It is relative easy to show that the first eigenvalue of the Laplacians satisfies $-\lambda_1(t) \geq t$, with equality only for $t=1$  see {\itshape e.g.} \cite[Remark 6.13]{tian-book}. Thus we have that the kernel is trivial for $t\in(0,1)$.
For $t=1$, we have that the kernel is given by holomorphic vector fields with zeros.

\subsection{Some {\itshape a priori} estimates}
The Laplacian and higher order estimates can be repeated. We study now the $\mathcal{C}^0$-estimates, which we know they would fail in general, but we can hope to find some sufficient condition for them to hold.

On $(X^n,\omega)$ compact K\"ahler manifold, set $\omega_\varphi:=\omega+\sqrt{-1}\partial\overline\partial\varphi$, and consider the functional
$$ I_\omega(\varphi) = \dashint_X \varphi (\omega^n-\omega_\varphi^n) . $$
We have $I_\omega(\varphi)\geq 0$. In fact, $I_\omega$ is equivalent to the {\em generalized energy} functional
$$ J_\omega(\varphi) := \sqrt{-1} \sum_{k=0}^{n-1} \frac{k+1}{n+1} \dashint_X \partial\varphi \wedge \overline\partial\varphi \wedge \omega^k \wedge \omega_\varphi^{n-k} \geq 0 . $$
More precisely, \cite[Lemma 2.2]{tian-Inv87},
$$ \frac{1}{n+1} I_\omega(\varphi) \leq J_\omega(\varphi) \leq \frac{n}{n+1} I_\omega(\varphi) . $$

\begin{theorem}[{Tian \cite{tian-Inv87}}]\label{thm:tian-estimates-fano}
Let $(X^n,\omega)$ be a compact Fano K\"ahler manifold. Along the Aubin path $\{\omega_{\varphi_t}:=\omega+\sqrt{-1}\partial\overline\partial\varphi_t\}_{t\in[0,1]}$, we have the {\itshape a priori} estimates
$$ \|\varphi_t\|_{L^\infty} \leq C_t ( 1 + I_\omega(\varphi_t) ) , $$
with $C_t\leq C(\epsilon)$  for all $t>\epsilon$ for any chosen $\epsilon>0$.

\end{theorem}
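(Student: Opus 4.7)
\emph{Plan.} The strategy adapts the Moser iteration used in the $\mathcal{C}^0$-estimate for the first Calabi conjecture (Section 2.5), but now the factor $e^{-t\varphi_t}$ in the right-hand side of \eqref{eq:ke-fano-continuity} depends on $\varphi_t$ itself, forcing the estimate to be \emph{conditional} in $I_\omega(\varphi_t)$. The plan is to bound $\varphi_t - \bar\varphi_t$ and the average $\bar\varphi_t := \dashint_X\varphi_t\omega^n$ separately, combining Dirichlet-energy control (coming directly from $I_\omega$ by integration by parts) with Moser iteration, and then controlling $\bar\varphi_t$ by Jensen's inequality applied to the integral form of the continuity equation.

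\emph{Oscillation control.} The integration by parts that already appeared in the proof of uniqueness for the first Calabi conjecture (Section 2.3) gives
\[
I_\omega(\varphi_t)\,V = \int_X \sqrt{-1}\,\partial\varphi_t \wedge \bar\partial\varphi_t \wedge \sum_{k=0}^{n-1}\omega^k \wedge \omega_{\varphi_t}^{n-1-k} \geq c \int_X |\nabla\varphi_t|_\omega^2\,\omega^n
\]
for a universal $c>0$, so that the Poincar\'e inequality on $(X,\omega)$ yields $\|\varphi_t - \bar\varphi_t\|_{L^2}^2 \leq C\,I_\omega(\varphi_t)$. A Moser iteration exactly parallel to the one in Section 2.5, multiplying the equation
\[
(e^{f-t\varphi_t}-1)\omega^n = \sqrt{-1}\,\partial\bar\partial\varphi_t \wedge \sum_{k=0}^{n-1}\omega^k \wedge \omega_{\varphi_t}^{n-1-k}
\]
by powers of $(\varphi_t - \bar\varphi_t)_\pm$ and iterating via the Sobolev inequality on $(X,\omega)$, upgrades this to
\[
\|\varphi_t - \bar\varphi_t\|_{L^\infty} \leq C(\epsilon)\bigl(I_\omega(\varphi_t)+1\bigr),
\]
uniformly for $t \geq \epsilon$.

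\emph{Average control.} For the upper bound on $\bar\varphi_t$, the maximum principle applied to \eqref{eq:ke-fano-continuity} at a minimum of $\varphi_t$ gives $\inf\varphi_t \leq \sup f/t$, which combined with the oscillation bound bounds $\bar\varphi_t$ from above. For the lower bound, integrating \eqref{eq:ke-fano-continuity} gives $\int_X e^{f-t\varphi_t}\omega^n = V$; Jensen's inequality applied to the probability measure $V^{-1}e^f\omega^n$ and to the convex function $x\mapsto e^{-tx}$ yields $\int_X \varphi_t e^f\omega^n \geq 0$, which together with the oscillation bound forces $\bar\varphi_t \geq -C(\epsilon)(I_\omega(\varphi_t)+1)$. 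Assembling everything,
\[
\|\varphi_t\|_{L^\infty} \leq |\bar\varphi_t| + \|\varphi_t - \bar\varphi_t\|_{L^\infty} \leq C(\epsilon)\bigl(I_\omega(\varphi_t)+1\bigr),
\]
as desired.

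\emph{Main obstacle.} The Moser iteration is more delicate than in the Calabi-Yau case, because the factor $e^{-t\varphi_t}$ on the right of the Monge-Amp\`ere equation is not a priori bounded; it grows wherever $\varphi_t$ is very negative, and could in principle defeat the gain in integrability from the Sobolev inequality. Handling this requires splitting $\varphi_t-\bar\varphi_t$ into its positive and negative parts and using that on $\{\varphi_t > \bar\varphi_t\}$ the exponential is controlled above by $e^{\sup f - t\bar\varphi_t}$, while on the opposite set the growth must be absorbed into the iteration by exploiting the uniform Ricci lower bound $\mathrm{Ric}(\omega_{\varphi_t}) \geq t\omega_{\varphi_t} \geq \epsilon\omega_{\varphi_t}$ that the continuity path provides for $t\geq \epsilon$. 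This uniform lower bound is precisely what makes the constants behave as $C_t \leq C(\epsilon)$ rather than degenerating as $t\to 0$.
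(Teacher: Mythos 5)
Your overall architecture (oscillation plus average, with Jensen's inequality giving $\int_X \varphi_t e^{f}\omega^n\ge 0$ and the maximum principle giving $\inf\varphi_t\le \sup f/t$) is fine, and the observation that $I_\omega(\varphi_t)$ controls the Dirichlet energy of $\varphi_t$ with respect to $\omega$ is correct. The gap is in the step you yourself flag as the ``main obstacle'': the Moser iteration for the \emph{negative} part, run with the Sobolev inequality of the \emph{fixed} metric $\omega$. Multiplying the equation by $(\varphi_t-\bar\varphi_t)_-^{\,p}$ and integrating by parts, the good gradient term is dominated by $\int (\varphi_t-\bar\varphi_t)_-^{\,p}\, e^{f-t\varphi_t}\,\omega^n$, and on the support of the negative part $e^{-t\varphi_t}$ grows exponentially exactly where the test power is large; an exponential cannot be absorbed by the polynomial gain of Sobolev, so the iteration as described does not close. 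Your proposed remedy, ``absorb the growth using $\mathrm{Ric}(\omega_{\varphi_t})\ge t\,\omega_{\varphi_t}$,'' is not connected to the scheme you set up: that curvature bound gives geometric control (Sobolev, Poincar\'e, Myers diameter, Green's function) for the \emph{moving} metric $\omega_{\varphi_t}$, and it enters nowhere in an iteration phrased with the fixed-metric Sobolev inequality. There is also a circularity in the positive-part discussion: bounding $e^{f-t\varphi_t}$ by $e^{\sup f-t\bar\varphi_t}$ presupposes a lower bound on $\bar\varphi_t$, which in your structure is only obtained later from the oscillation bound (for the positive part one should instead just use that the exponential term enters with a favorable sign, or the Green's function argument with $-\Delta_\omega\varphi_t\le n$, which is what the paper does for $\sup\varphi_t$).

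The way the paper (following Tian) gets around the exponential is to run the iteration for $\tilde\varphi_t=\max\{1,-\varphi_t\}$ \emph{with respect to the moving metric} $\omega_{\varphi_t}$: the Monge--Amp\`ere equation is then used only through the pointwise inequality $n-\Delta_{\omega_{\varphi_t}}\varphi_t=\mathrm{tr}_{\omega_{\varphi_t}}\omega>0$, so the factor $e^{f-t\varphi_t}$ never appears in the iteration at all. The Ricci bound $\mathrm{Ric}(\omega_{\varphi_t})\ge t\,\omega_{\varphi_t}\ge\epsilon\,\omega_{\varphi_t}$ is what makes the Sobolev constant of $\omega_{\varphi_t}$ (and, via the first-eigenvalue estimate, the Poincar\'e constant used to pass from $L^2$ to $L^1$) uniform for $t\ge\epsilon$, and the $L^1(\omega_{\varphi_t})$-norm of $\tilde\varphi_t$ is then bounded by $1-\dashint_X\varphi_t\,\omega_{\varphi_t}^n$, which combines with the Green's function bound $\sup\varphi_t\le C(1+\dashint_X\varphi_t\,\omega^n)$ to produce exactly the quantity $I_\omega(\varphi_t)$. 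To repair your argument you would need either to switch the negative-part iteration to the moving metric in this way, or to import a genuinely different tool (e.g.\ a uniform exponential integrability of $-\varphi_t$ of $\alpha$-invariant type), neither of which is supplied by the proposal as written.
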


\begin{proof}
The equation for the Aubin path yields $\dashint_X \exp(t\varphi)\omega_\varphi^n=\dashint_X\exp f\omega^n=1$, where recall that $f$ is the Ricci potential of $\omega$ with the normalization above. It follows that $\inf\varphi\leq 0$ and $\sup\varphi\geq 0$. We claim that
$$ \|\varphi\|_{L^\infty} \leq \mathrm{osc}\varphi := \sup\varphi - \inf\varphi \leq C_t ( 1 + I_\omega(\varphi_t) ) . $$

As for $\sup\varphi$, an estimate follows by using the Green function as for first Calabi's conjecture:
$$ \sup\varphi \leq C \left( 1 + \dashint_X \varphi_t \omega^n \right) . $$
Indeed, if $G$ is the Green function such that $G\geq0$, then write $\varphi(x)=\dashint_X \varphi \omega^n+\dashint -\Delta_\omega \varphi G(x,\_) \omega^n \leq C(1+\dashint_X \varphi \omega^n)$ since $-\Delta_\omega \varphi\leq n$.

We prove now an estimate for $\inf\varphi$. Take $1\leq \tilde\varphi_t := \max\{1,-\varphi_t\}$. Note that $\tilde\varphi_t^n(n-\Delta_{\omega_{\varphi_t}}\varphi_t)\geq 0$, and then integrate on $X$ with respect to the moving metric  $\omega_{\varphi_t}$ to get
$$ \dashint_X \left| \nabla_{\omega_{\varphi_t}}(\tilde\varphi_t)^{\frac{p+1}{2}} \right| \omega_{\varphi_t}^n\leq C(p,n)\dashint_X \tilde\varphi^{\frac{p+1}{2}} \omega_{\varphi_t}^n . $$
Use the Sobolev embedding with respect to $\omega_{\varphi_t}^n$. Since $\mathrm{Ric}({\omega_{\varphi_t}})\geq t$, then we can take a Sobolev constant that depends only on $C$ and $n$. Then, by Moser's iteration, and using the recalled first eigenvalue estimate:
\begin{eqnarray*}
\sup\tilde\varphi_t &\leq& \|\tilde\varphi_t\|_{L^2} \leq \frac{C}{t} (\|\tilde\varphi_t\|_{L^1} + 1) \\[5pt]
&\leq& \frac{C}{t} (1-\dashint_X \varphi_t \omega_{\varphi_t}^n ) ,
\end{eqnarray*}
where the second inequality follows by the Poincaré inequality.
At the end, we get
$$ \|\varphi_t\|_{L^\infty} \leq \sup\varphi_t-\inf\varphi_t \leq C (1+\dashint_X \varphi_t \omega^n)+\frac{C}{t} (1-\dashint_X \varphi_t\omega_{\varphi_t}^n) , $$
proving the statement.
See also {\itshape e.g.} \cite[Lemma 6.19]{tian-book}.
\end{proof}

\begin{definition}[{Tian's properness \cite{tian-Inv87}}]
Let $(X^n,\omega)$ be a compact K\"ahler manifold. Consider the space $\mathcal{K}_{[\omega]}$ of K\"ahler metrics in the K\"ahler class $[\omega]$. A functional $F_\omega$ on $\mathcal{K}_{[\omega]}$ is called {\em proper} if
$$ F_\omega(\varphi) \geq f(I_\omega(\varphi)) $$
for some $f(t)$ increasing function as $t\to+\infty$.
\end{definition}

We take the Mabuchi functional $\mathcal{M}_\omega$. Recall that it is the primitive of the closed $1$-form $\alpha(\psi)=(\psi,\mathrm{Sc}_{\omega_\varphi}-\hat S)_{L^2_{\omega_\varphi}}$, and it can be expressed as, see {\itshape e.g.} \cite[page 95]{tian-book},
$$ \mathcal{M}_\omega(\varphi) = \dashint_X \log\frac{\omega_\varphi^n}{\omega^n} \omega_\varphi^n + \dashint_X \left( f (\omega^n-\omega_\varphi^n) - (I_\omega-J_\omega)(\varphi) \right) , $$
where $f$ is the Ricci potential of $\omega$.
Here the first integral represents the "entropy" and the second integral is the "energy".

\begin{theorem}[{Tian}]\label{thm:mabuchi-proper-ke}
Let $(X^n,\omega)$ be a compact Fano K\"ahler manifold.
If the Mabuchi functional $\mathcal{M}_\omega$ is proper, then there exists a K\"ahler-Einstein metric.
\end{theorem}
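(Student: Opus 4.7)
The plan is to run the Aubin continuity method \eqref{eq:ke-fano-continuity} and close it by using the Mabuchi functional as a Lyapunov quantity. Openness at every $t\in[0,1)$ is as discussed above: the linearization is $\Delta_{\omega_{\varphi_t}}+t$, whose kernel is trivial for $t<1$ thanks to the Lichnerowicz eigenvalue bound $\lambda_1(-\Delta_{\omega_{\varphi_t}})\geq t$ coming from $\mathrm{Ric}(\omega_{\varphi_t})\geq t\omega_{\varphi_t}$. The Laplacian and higher-order estimates transfer verbatim from the proof of the second Calabi conjecture once a uniform $\mathcal{C}^0$-bound on $\varphi_t$ is in hand, so by Theorem \ref{thm:tian-estimates-fano} the only genuine issue is to bound $I_\omega(\varphi_t)$ uniformly for $t\in[\epsilon,1)$, for any fixed $\epsilon>0$.

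This is where properness enters. The strategy is to show that $t\mapsto\mathcal{M}_\omega(\varphi_t)$ is non-increasing along the Aubin path, so that $\mathcal{M}_\omega(\varphi_t)\leq\mathcal{M}_\omega(0)=0$; properness then forces $I_\omega(\varphi_t)\leq C$ uniformly in $t$. Feeding this bound into Theorem \ref{thm:tian-estimates-fano} produces the missing uniform $\mathcal{C}^0$-bound, which closes the continuity method and delivers the K\"ahler-Einstein metric $\omega_{\varphi_1}\in 2\pi c_1(X)$.

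The main obstacle is therefore the monotonicity of $\mathcal{M}_\omega$ along the Aubin path. Differentiating \eqref{eq:ke-fano-continuity} in $t$ gives $(\Delta_{\omega_{\varphi_t}}+t)\dot\varphi_t=-\varphi_t$, and tracing $\mathrm{Ric}(\omega_{\varphi_t})=(1-t)\omega+t\omega_{\varphi_t}$ yields $\hat S-\mathrm{Sc}_{\omega_{\varphi_t}}=(1-t)\Delta_{\omega_{\varphi_t}}\varphi_t$. Plugging into $\tfrac{d}{dt}\mathcal{M}_\omega(\varphi_t)=\dashint_X\dot\varphi_t(\hat S-\mathrm{Sc}_{\omega_{\varphi_t}})\omega_{\varphi_t}^n$ and integrating by parts once,
\[
\frac{d}{dt}\mathcal{M}_\omega(\varphi_t)=-(1-t)\dashint_X\varphi_t^2\,\omega_{\varphi_t}^n - t(1-t)\dashint_X\bigl(|\nabla\dot\varphi_t|^2_{\omega_{\varphi_t}}-t\dot\varphi_t^2\bigr)\omega_{\varphi_t}^n.
\]
The first term is non-positive. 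For the mixed term, decomposing $\dot\varphi_t=\bar u+v$ with $v$ of zero mean and applying the Poincar\'e inequality with constant $\lambda_1\geq t$ gives $\dashint_X(|\nabla\dot\varphi_t|^2-t\dot\varphi_t^2)\omega_{\varphi_t}^n\geq -t\bar u^2$; combined with the identity $t\bar u=-\dashint_X\varphi_t\,\omega_{\varphi_t}^n$ (obtained by integrating the linearized equation) and with Cauchy--Schwarz $\dashint_X\varphi_t^2\,\omega_{\varphi_t}^n\geq(\dashint_X\varphi_t\,\omega_{\varphi_t}^n)^2$, the two contributions cancel exactly and produce $\tfrac{d}{dt}\mathcal{M}_\omega(\varphi_t)\leq 0$. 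The main subtlety is that this chain of inequalities must be tight; the Poincar\'e equality case $\lambda_1=t$ corresponds to the existence of a holomorphic vector field, which is harmless for $t<1$, and the Cauchy--Schwarz equality case to $\varphi_t$ being constant, which would already force $\varphi_t\equiv 0$.
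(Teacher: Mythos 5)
Your proposal is correct and is essentially the paper's own argument: monotonicity of $\mathcal{M}_\omega$ along the Aubin path \eqref{eq:ke-fano-continuity} plus properness bounds $I_\omega(\varphi_t)$, and Theorem \ref{thm:tian-estimates-fano} together with the Laplacian and higher-order estimates then closes the continuity method. The only cosmetic differences are that the paper obtains $\frac{d}{dt}\mathcal{M}_\omega(\varphi_t)=-(1-t)\frac{d}{dt}(I_\omega-J_\omega)(\varphi_t)\leq 0$ from the explicit entropy-plus-energy expression of $\mathcal{M}_\omega$ and the sign of $\frac{d}{dt}(I_\omega-J_\omega)$, while you differentiate the defining one-form directly and feed in the spectral gap $\geq t$ via Poincar\'e and Cauchy--Schwarz (the same estimate in different packaging); note also that at $t=0$ the path solves $\omega_{\varphi_0}^n=\exp(f)\,\omega^n$, so the starting value is $\mathcal{M}_\omega(\varphi_0)$, a fixed constant rather than $0$, which changes nothing.
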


\begin{proof}
 Let $\varphi_t$ a solution of \eqref{eq:ke-fano-continuity}. We compute
 \begin{eqnarray*}
 \frac{d}{dt} \mathcal{M}_\omega(\varphi_t) &=& - \frac{d}{dt} \dashint_X t \varphi_t \omega_{\varphi_t}^n - \frac{d}{dt}(I_\omega-J_\omega)(\varphi_t) \\
 &=& - \dashint_X \varphi_t \omega_{\varphi_t}^n - t \dashint_X \dot\varphi_t\omega_{\varphi_t}^n - t\dashint \varphi_t \Delta_{\omega_{\varphi_t}} \dot\varphi_t \omega_{\varphi_t}^n - \frac{d}{dt}(I_\omega-J_\omega)(\varphi_t) \\
 &=& \dashint_X 	\Delta_{\omega_{\varphi_t}}\dot\varphi_t \omega_{\varphi_t}^n - t \dashint_X \varphi_t \Delta_{\omega_{\varphi_t}} \dot \varphi_t\omega_{\varphi_t}^n - \frac{d}{dt}(I_\omega-J_\omega) (\varphi_t) \\
 &=& - t \dashint_X \varphi_t \Delta_{\omega_{\varphi_t}} \dot \varphi_t\omega_{\varphi_t}^n - \frac{d}{dt}(I_\omega-J_\omega) (\varphi_t) ,
 \end{eqnarray*}
 where we are using that $\Delta_{\omega_{\varphi_t}}\dot\varphi_t=-\varphi_t-t\dot\varphi_t$ as follows by differentiating the equation.
 By the equation, we compute
 \begin{eqnarray*}
  \frac{d}{dt} (I_\omega - J_\omega)(\varphi_t) &=& - \dashint_X \varphi_t \Delta_{\omega_{\varphi_t}} \dot \varphi_t \omega_{\varphi_t}^n \\
  &=& \dashint_X (\Delta_{\omega_{\varphi_t}}\dot\varphi_t+t\dot\varphi_t)\Delta_{\omega_{\varphi_t}}\dot\varphi_t \geq 0 .
 \end{eqnarray*}
 At the end, we get
 $$ \frac{d}{dt} \mathcal{M}_\omega(\varphi_t) = -(1-t) \frac{d}{dt} (I_\omega-J_\omega)(\varphi_t) \leq 0 . $$
 If we assume that $\mathcal{M}_\omega$ is proper, then $I_\omega-J_\omega$ is bounded along the Aubin path. Then, by Theorem \ref{thm:tian-estimates-fano}, we get $\|\varphi_t\|_{L^\infty} \leq C$.
 (See also {\itshape e.g.} \cite[Theorem 7.13]{tian-book}.)
\end{proof}

\begin{remark} It is proven  by Tian in \cite{tian-Inv} that for manifold with discrete automorphism also the {\itshape viceversa} of the above Theorem holds. This was used for proving that there exists a Fano threefold with no holomorphic vector fields which has no KE metric (the famous deformation of the Mukai-Umemura manifold \cite[Section 7]{tian-Inv}). This result was crucial for the understanding of the relations between existence of KE metrics and stability conditions, and it sees in the work of Berman we will describe in the last lecture its ultimate incarnation.
\end{remark}

\subsection{$\alpha$-invariant criterion}

For a compact K\"ahler manifold $(X^n,\omega)$, define the {\em $\alpha$-invariant} \cite{tian-Inv87} as
\begin{eqnarray}\label{eq:tian-alpha}
\alpha(X) &:=& \sup \{ \alpha>0 \;:\; \text{ there exists } C_\alpha \text{ such that,} \nonumber\\
&& \text{for any } \omega+\sqrt{-1}\partial\overline\partial\varphi>0, \nonumber\\
&& \text{it holds } \dashint_X \exp(-\alpha(\varphi-\sup\varphi)) dV_g \leq C_\alpha \} .
\end{eqnarray}

where $dV_g$ is any smooth volume form.

\begin{theorem}[{Tian \cite[Theorem 2.1]{tian-Inv87}}]
Let $(X^n,\omega)$ be a compact Fano K\"ahler manifold.
If $\alpha(X)>\frac{n}{n+1}$, then there exists a K\"ahler-Einstein metric.
\end{theorem}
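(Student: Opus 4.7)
The strategy is to run the continuity method along the Aubin path \eqref{eq:ke-fano-continuity}, using the $\alpha$-invariant as the only input beyond what was needed for the Calabi conjectures. Openness for $t \in (0,1)$ follows from the first-eigenvalue analysis discussed earlier in the excerpt; the Laplacian and $\mathcal{C}^{3,\alpha}$-estimates adapt from the proof of the Calabi conjectures, since the Sobolev constant along the path is uniformly controlled by the Ricci lower bound $\mathrm{Ric}(\omega_{\varphi_t}) \geq t\,\omega_{\varphi_t}$. Closedness therefore reduces to a uniform $\mathcal{C}^0$-estimate on $\varphi_t$ for $t \in [\epsilon, 1]$, and Theorem \ref{thm:tian-estimates-fano} further reduces this to a uniform bound on $I_\omega(\varphi_t)$.

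To obtain that bound, I would pick $\alpha \in (n/(n+1), \alpha(X))$ and play off two facts about the renormalization $\tilde\varphi_t := \varphi_t - \sup \varphi_t \leq 0$:
$$ \int_X e^{-\alpha \tilde\varphi_t}\, \omega^n \leq C_\alpha V \quad (\alpha\text{-invariant}), \qquad e^{t\sup\varphi_t}\, V = \int_X e^{f - t\tilde\varphi_t}\,\omega^n \quad (\text{integration of }\eqref{eq:ke-fano-continuity}). $$
For $t \leq \alpha$ the pointwise inequality $-t\tilde\varphi_t \leq -\alpha\tilde\varphi_t$ combined with these two displays immediately yields $\sup\varphi_t \leq C$. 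To push $t$ up to $1$ and to control the oscillation from below, one multiplies the telescoping identity
$$ \omega_{\varphi_t}^n - \omega^n = -\sqrt{-1}\partial\overline\partial\tilde\varphi_t \wedge \sum_{k=0}^{n-1} \omega_{\varphi_t}^k \wedge \omega^{n-1-k} $$
by $(-\tilde\varphi_t)^p$, integrates by parts, and uses the Sobolev embedding $W^{1,2}_\omega \hookrightarrow L^{2n/(n-1)}_\omega$. This produces a Moser iteration which converts the exponential $L^\alpha$-integrability coming from the $\alpha$-invariant into an $L^\infty$-bound on $\tilde\varphi_t$, and hence the desired bound on $I_\omega(\varphi_t)$.

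The main obstacle is the sharp arithmetic at the threshold $n/(n+1)$: the Sobolev gain per Moser step is the factor $n/(n-1)$, while the expansion of $\omega_{\varphi_t}^n - \omega^n$ contributes $n$ cross-terms, so one needs $\alpha(n+1) > n$ exactly in order to start an iteration whose constants remain summable as $p \to \infty$. A cleaner structural route, which I would probably prefer, is to show directly that $\alpha(X) > n/(n+1)$ forces the Mabuchi functional to satisfy a coercivity bound $\mathcal{M}_\omega(\varphi) \geq \delta\, I_\omega(\varphi) - C$ with $\delta$ proportional to $\alpha(n+1) - n$ — extracted by applying Jensen's inequality to the entropy term $\frac{1}{V}\int \log(\omega_\varphi^n/\omega^n)\,\omega_\varphi^n$ and then using the $\alpha$-invariant on $\varphi - \sup\varphi$. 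Properness of $\mathcal{M}_\omega$ then lets Theorem \ref{thm:mabuchi-proper-ke} conclude, geometrically explaining why $n/(n+1)$ is the natural threshold.
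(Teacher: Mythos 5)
Your preferred ``cleaner structural route'' is exactly the paper's proof: applying Jensen's inequality to $\log \dashint_X \exp(-\alpha(\varphi-\sup\varphi)-\log\frac{\omega_\varphi^n}{\omega^n}+f)\,\omega_\varphi^n$ together with the $\alpha$-invariant bound gives $\mathcal{M}_\omega(\varphi) \geq \bigl(\alpha-\tfrac{n}{n+1}\bigr) I_\omega(\varphi) - C_\alpha''$ (using $I_\omega-J_\omega \leq \tfrac{n}{n+1}I_\omega$), i.e.\ properness of the Mabuchi functional, and Theorem \ref{thm:mabuchi-proper-ke} then concludes. The Moser-iteration alternative you sketch first, with its delicate threshold arithmetic, is closer in spirit to Tian's original 1987 argument along the Aubin path but is not needed once properness is established.
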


\begin{proof}
Let $\alpha<\alpha(X)$ be such that the condition in \eqref{eq:tian-alpha} holds. By definition, there exists $C_\alpha$ such that, for any $\omega+\sqrt{-1}\partial\overline\partial\varphi>0$,

\begin{eqnarray*}
 \log C_\alpha &\geq& \log \dashint_X \exp(-\alpha(\varphi-\sup\varphi)) \exp f \omega^n \\
 &=& \log \dashint_X \exp(-\alpha(\varphi-\sup\varphi)-\log\frac{\omega_\varphi^n}{\omega^n} + f) \omega_\varphi^n \\
 &\geq& -\dashint_X \log\frac{\omega_\varphi^n}{\omega^n} \omega_\varphi^n + \alpha (\sup\varphi-\dashint_X \varphi \omega_\varphi^n) \\
 &\geq& -\dashint_X \log\frac{\omega_\varphi^n}{\omega^n} \omega_\varphi^n + \alpha I_\omega(\varphi) .
\end{eqnarray*}

So we have
\begin{eqnarray*}
\mathcal{M}_\omega(\varphi) &\geq& - (I_\omega-J_\omega)(\varphi)+\alpha I_\omega(\varphi)-C'_\alpha \\
&\geq& (\alpha-\frac{n}{n+1}) I_\omega(\varphi) - C''_\alpha ,
\end{eqnarray*}
which proves that the Mabuchi functional is proper. The conclusion follows by Theorem \ref{thm:mabuchi-proper-ke}.
\end{proof}

\begin{remark}
The same result holds by considering the action of a finite group $G$: one can define an invariant $\alpha_G$, and if $\alpha_G> \frac{n}{n+1}$, then there exists a $G$-invariant K\"ahler-Einstein metric \cite[Theorem 4.1]{tian-Inv87}.
\end{remark}

\begin{remark}
If $\alpha=\frac{n}{n+1}$, then Fujita \cite{fujita-arxiv} proved $K$-stability. The existence of a KE metrics follows  by  the crucial result of \cite{chen-donaldson-sun} we will describe in the last lecture.
\end{remark}

\begin{remark}
Demailly (see appendix of \cite{cheltsov-shramov}) identified $\alpha(X)=\mathrm{glct}(X)$ as the {\em global log canonical threshold}. It is defined as
\begin{eqnarray*}
 \mathrm{glct}(X) &:=& \sup \{ \lambda>0 \;:\; (X,\lambda D) \text{ is }\log\text{-canonical}\\
 && \text{for all } D \text{ effective } \mathbb{Q}\text{-divisor with } D'\sim_{\mathbb Q} - K_X, \\
 && \text{\itshape i.e. } D\in |-mK_X| \text{ such that } D=\frac{1}{m} D' \}
 \end{eqnarray*}
 The point is that, for $D=s_D^{-1}(0)\in |-mK_X|$, then $\eta=\frac{1}{m}\log|s_D|^2$ is a singular metric in $c_1(X)$ that can be used (by Demailly's approximation) to  estimate the alpha invariant.
 
 For example, Chetsov \cite{cheltsov} computed the $\alpha$-invariant for Del Pezzo surfaces, {\itshape i.e.} two dimensional Fanos, of degree $c_1^2(X)=1$ (which have small anticanonical linear system): if there are no anti-canonical curves with cusps, then $\alpha(X)=1$; if there are cusps, then $\alpha(X)=\frac{5}{6}$, which still satisfies the criterion for existence of KE metrics.
\end{remark}

\section{Towards K-stability}
We are aimed at the Yau-Tian-Donaldson conjecture on existence of K\"ahler-Einstein metrics in the Fano case Theorem \ref{CDS}, which gives equivalence between the existence of KE metrics on Fano manifolds and the algebraic geometric notion of \emph{K-polystability}.

Now some reasons why we should expect something as Theorem \ref{CDS}.

\begin{itemize}
	\item \emph{Historical:} The problem is the analogous in the case of varieties of the problem of equipping vector bundles with Hermitian-Einstein metrics. This is addressed by the Hitchin-Kobayashi correspondence, which indeed show that the existence of such special metric on the vector bundles is equivalent to stability in the sense of Mumford \cite{donaldson-Duke87, uhlenbeck-yau, li-yau, lubke-teleman}. In a certain sense, the Yau-Tian-Donaldson conjecture is a "more non-linear" version of the Hitchin-Kobayashi correspondence.
	\item \emph{Moduli:} From a more heuristic point of view, recall the following fact concerning \emph{jump of complex structures} for Fano manifolds.
	Let $\mathcal{X}\stackrel{\pi}{\to}\Delta$ be a holomorphic submersion. There are cases in which $X_t=\pi^{-1}(t)$ are all smooth Fano, with  $X_t\cong X_{t'}$ for any $t,t'\neq0$, but $X_0\ncong X_t$.  It follows that the moduli space of complex structures cannot be Hausdorff, since there would be non-closed points that cannot be separated: $[X_0]\in \overline{[X_t]}$. But the moduli space of Einstein \emph{metrics} are Hausdorff. This suggests that by "removing" non KE Fanos we could hopefully  get a nice (algebraic) moduli space of Fano manifolds, with topology compatible with distance (Gromov-Hausdorff, see next \ref{sec:GH}) induced by the KE metrics. Algebro-geometric notion of stabilities are good for constructing separated moduli in algebraic geometry.
\end{itemize}

Now we move to some more mathematical reasons which should motivate why the notion of K-stability is a natural one.

\subsection{Variational point of view}\label{sec:K-stab-variational-pov}
The space $\mathcal{K}_{\omega}$ of K\"ahler potentials (say on a Fano, even if this description is more general for the cscK problem) looks like an infinite dimensional negative curvature space in the Mabuchi metric:  $K_\varphi(\phi_1,\phi_2){\phi_3}=-\{\{\phi_1,\phi_2\}_{\varphi}, \phi_3\}_{\varphi}$, where $\{,\}_{\varphi}$ is the Poisson's bracket for $\omega_{\varphi}$. As we have seen in section \ref{sec:mabuchi} such space admits geodesics for which the Mabuchi energy is convex. Note the following: take $f$ such that $\mathrm{grad}\,f$ is holomorphic. Denote by $\psi_t$ the flux of $\mathrm{grad}\,f$: then $\psi_t^*\omega=\omega+\sqrt{-1}\partial\overline\partial\varphi_t$ where $\dot\varphi_t=f$. Essentially by definition, the Futaki invariant is the derivative of the Mabuchi's energy:
$$ \frac{d}{dt} \mathcal{M}_\omega(\varphi_t)=F((\mathrm{grad}\, f)^{1,0}). $$
Thus we can make the following "speculation" (since we are on an infinite dimensional space, so care must be considered...): since the Mabuchi's energy is convex on $\mathcal{K}_{\omega}$ and its critical points are precisely the KE metrics, if at the "infinity" of $\mathcal{K}_{\omega}$, $\mathcal{M}_\omega$ has positive slope, we can suspect that a critical point actually do exists. In order to check the "slope at infinity" we could look for something like:
$$  \lim_{t\to+\infty} \frac{d\mathcal{M}_\omega(\varphi_t)}{dt}=: \tilde{\mathrm{DF}}((\varphi_t)_t)$$
where $\tilde{\mathrm{DF}}$ should be a kind of  "Futaki invariant".

Thus, we would have the following "geodesic stability picture": if we know that $\tilde{\mathrm{DF}}>0$ for all geodesic rays probing infinity, then we expect to have a critical point: in this case, we would say that $X$ is {\em stable}, and hopefully there exists some minimum, {\itshape i.e.}, a K\"ahler-Einstein metric. If $\tilde{\mathrm{DF}}\geq0$, we would say that $X$ is {\em semistable}. The case in which we can find rays such that  $\tilde{\mathrm{DF}}<0$ is  {\em unstable} (and there is no minimum, {\itshape i.e.}, a KE metric should  not exist). 

The precise mathematical definition of $\tilde{\mathrm{DF}}$ will be given for special (algebraic) rays: \emph{test configurations}. In such case, the limits at infinity are singular algebraic varieties with a $\C^*$ action, and $\tilde{\mathrm{DF}}$ will be precisely equal to the Futaki invariant on the singular limits at infinity if such limits are not too singular (actually this was  used in the first definition of K-stability proposed by Tian \cite{tian-Inv}). Moreover, as we will see, such Futaki invariant admits a purely algebro geometric definition  which extends to very singular limits (after Donaldson \cite{donaldson-JDG}). It is worth noting that, thanks to deep algebro-geometric results, to test stability is sufficient to consider only mildly singular degenerate limits for which the original Tian's definitition works \cite{li-xu}.

Thus we should expect that if we have a KE metric, it should be relatively "easier" to show that on such algebraic geodesic rays we have positivity of the $DF$ invariant (this is the content of Berman's result we will discuss in Section $6$). The other direction seems to be harder: we are imposing a condition only on certain algebraic directions in $\mathcal{K}_{\omega}$, which are much less than all possible infinities. The fact that this will be sufficient is deep. The proof of CDS, as we will see, is essentially arguing by contradiction by constructing destabilizing (algebraic) test configurations, but the actual way to do this is quite involved.  Anyway, notice that a more pure variational viewpoint has been recently proved to be also successful to show existence for K-stable Fano manifolds, at least in the case of  no holomorphic vector fields, see Berman-Boucksom-Jonsson \cite{berman-boucksom-jonsson}.

\subsection{Moment map picture for cscK}
Let $(M^{2n},\omega)$ be a compact symplectic manifold.
When $\pi_1(X)=\{1\}$, then $\mathcal{L}_X\omega=0$ yields that there exists $h$ such that $dh=-\iota_X\omega$.
Let $G$ be a compact group in $\mathrm{Sympl}(M,\omega)$. A moment map for the action of $G$ is given by
$$ \mu\colon M \to \mathfrak{g}^\vee $$
a $G$-equivariant map such that, for any $v\in \mathfrak{g}$,
$$ d \langle \mu, v \rangle = -\omega(\rho(v),\_) , $$
where $\rho$ denotes the derivative of the action.
The {\em symplectic quotient} is then defined as
$$ M /^{\text{sym}} G := \mu^{-1}(0)/G . $$

For example, $\C^n/^{\text{sym}} U(1)=\mu^{-1}(0)/U(1) \cong S^{2n-1}/S^1\cong \C \mathbb{P}^{n-1}$ with $\mu(z)=|z|^2-1$.

Consider now $(X^n,\omega)$ a compact K\"ahler manifold. We focus on $\omega$ the symplectic structure on the underlying smooth manifold $M^{2n}$. Consider the infinite-dimensional K\"ahler manifold
$$ \mathcal{I}_\omega = \{ \omega\text{-compatible complex structures on } M \} . $$
Its tangent space is
$$ T_J\mathcal{I}_\omega = \{ A \in \mathrm{End}(TM) \;:\; AJ+ JA=0, \quad \omega(Ax,y)+\omega(x,Ay)=0\} . $$
There is a tautological complex structure
$$ \mathcal{J}_J A = JA .$$
There is a natural $L^2$-metric:
$$ \langle A, B \rangle_J = \int_M g_J(A,B)\frac{\omega^n}{n!} , $$
where $g_J=\omega(\_,J\_)$.
Then
$$ (\mathcal{I}_\omega, \mathcal{J}, \langle\_\vert\_\rangle) $$
is K\"ahler.
(See also {\itshape e.g.} \cite[Chapter 4]{tian-book}, \cite[Section 4.1]{gabor-book}.)

Consider the action of $\mathcal{G}_\omega$ the group of (Hamiltonian) symplectomorphisms. Its tangent space can be identified as $T\mathcal{G}_\omega\simeq \mathcal{C}^\infty_0(M;\mathbb R)$ the space of zero-average smooth functions.
Then:

\begin{theorem}[{Donaldson \cite{donaldson-Fields}, Fujiki \cite{fujiki}}]
	Let $(M^{2n},\omega)$ be a  K\"ahler manifold, and consider the action $\mathcal{G}_\omega$ of (Hamiltonian) symplectomorphism on the space $\mathcal{I}_\omega$ of $\omega$-compatible complex structures. Then the action $\mathcal{G}_\omega \circlearrowleft \mathcal{I}_\omega$ is symplectic, with moment map
	$$ \mu \colon \mathcal{I}_\omega \to \mathcal{C}^\infty_0(M;\mathbb R) , \qquad J \mapsto \mathrm{Sc}_{g_J}-\hat S $$
	where $g_J:=\omega(\_,J\_)$ and $\hat S$ is a cohomological invariant.
\end{theorem}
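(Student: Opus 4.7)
The proof requires three ingredients: that $\mu$ lands in $\mathrm{Lie}(\mathcal{G}_\omega)^\vee\cong C^\infty_0(M;\mathbb R)$ (identified with itself via the $L^2$-pairing $\int h_1 h_2\,\omega^n/n!$), that $\mu$ is $\mathcal{G}_\omega$-equivariant, and the infinitesimal moment identity. The first is immediate from the Chern--Weil computation recalled in Section \ref{sec:futaki}: the integral $\int_M \mathrm{Sc}_{g_J}\,\omega^n = 2\pi n\,c_1(X)\cdot[\omega]^{n-1}$ is cohomological, depending neither on $J$ nor on the K\"ahler representative, so setting $\hat S$ to be the corresponding average forces $\mathrm{Sc}_{g_J}-\hat S\in C^\infty_0$ for every $J\in\mathcal{I}_\omega$. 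Equivariance is automatic: any $\phi\in\mathcal{G}_\omega$ preserves $\omega$, hence $g_{\phi^{*}J}=\phi^{*}g_J$, and naturality of scalar curvature gives $\mathrm{Sc}_{g_{\phi^{*}J}}=\phi^{*}\mathrm{Sc}_{g_J}$, which is precisely the coadjoint action of $\mathcal{G}_\omega$ on $C^\infty_0$.

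The substance is therefore the identity
\begin{equation*}
\int_M h\cdot (d\mathrm{Sc})_J(A)\,\frac{\omega^n}{n!} \;=\; -\Omega_J(\rho(h)_J,A),\qquad \rho(h)_J=\mathcal L_{X_h}J,
\end{equation*}
for every $A\in T_J\mathcal{I}_\omega$ and every $h\in C^\infty_0$, where $X_h$ is the Hamiltonian vector field of $h$ ($\iota_{X_h}\omega=-dh$) and $\Omega_J(A,B)=\langle \mathcal J A,B\rangle_J$ is the K\"ahler form on $\mathcal{I}_\omega$. My plan is to compute both sides by a direct tensor calculation. An infinitesimal change of $J$ in direction $A$ (satisfying $AJ+JA=0$ and $\omega(A\cdot,\cdot)$ symmetric) induces the metric variation $\delta g(X,Y)=\omega(X,AY)$, which is symmetric and of type $(2,0)+(0,2)$ in complex coordinates. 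Plugging this into the classical Riemannian first-variation formula
\begin{equation*}
\delta\mathrm{Sc} \;=\; -\Delta_g\mathrm{tr}_g(\delta g)\;+\;\mathrm{div}\,\mathrm{div}(\delta g)\;-\;\langle\mathrm{Ric}_g,\delta g\rangle_g,
\end{equation*}
pairing with $h$, integrating by parts (no boundary terms, since $M$ is compact) and invoking the K\"ahler identity $\tfrac12\Delta_d=\Delta_{\bar\partial}=\Delta_\partial$ from Lecture 1 to pass from real to complex operators, one reorganises the left-hand side into an $L^2$-pairing of $A$ with a $(1,1)$-type tensor built out of $h$.

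The main obstacle I anticipate is bookkeeping rather than conceptual content: one must keep track of the $(2,0)+(0,2)$ nature of $\delta g$, handle the two covariant derivatives in $\mathrm{div}\,\mathrm{div}$ in complex coordinates, and reorganise the three terms of $\delta\mathrm{Sc}$ into a single self-adjoint operator. The cleanest organisation, which I would follow, is to show that the composition $h\mapsto (d\mathrm{Sc})_J(\mathcal L_{X_h}J)$ equals, up to sign, the Lichnerowicz operator $\mathbb L_\omega$ from Section \ref{sec:matsushima}; since $\mathbb L_\omega$ is manifestly self-adjoint, polarising in $h$ and $A$ yields the moment identity with the correct sign. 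As a conceptual sanity check, zeros of $\mu$ then correspond precisely to cscK metrics $g_J$, and the infinitesimal stabiliser $\ker\mathbb L_\omega$ at such a zero reproduces the Matsushima--Lichnerowicz description $\mathfrak h_0=\mathfrak k_0\oplus J\mathfrak k_0$ proved in the previous section, a reassuring consistency with the claim that the moment-map picture is the correct infinite-dimensional setting for the cscK problem.
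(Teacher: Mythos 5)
Your peripheral points are fine: $\mathrm{Sc}_{g_J}-\hat S$ has zero mean because $\int_M\mathrm{Sc}_{g_J}\,\omega^n$ is the cohomological quantity $2\pi n\,c_1\cdot[\omega]^{n-1}$, and equivariance follows from $g_{\phi^*J}=\phi^*g_J$ for $\phi\in\mathcal{G}_\omega$. Your first plan (linearise $\mathrm{Sc}$ in $J$ via the Riemannian first-variation formula, pair with $h$, integrate by parts) is also the standard route; it is exactly what the paper alludes to when it restates the theorem as $S^*=-JR$ and defers to Tian's and Sz\'ekelyhidi's books, no proof being given in the text. The problem is the ``cleanest organisation'' you say you would actually follow.

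First, the identification $h\mapsto(d\mathrm{Sc})_J(\mathcal{L}_{X_h}J)=\pm\mathbb{L}_\omega h$ is false. The flow $f_t$ of $X_h$ preserves $\omega$, so $g_{f_t^*J}=f_t^*g_J$ and $\mathrm{Sc}_{g_{f_t^*J}}=\mathrm{Sc}_{g_J}\circ f_t$; differentiating, $(d\mathrm{Sc})_J(\mathcal{L}_{X_h}J)=X_h(\mathrm{Sc}_{g_J})$, a first-order operator in $h$ (at a cscK point it vanishes identically while $\mathbb{L}_\omega$ does not, so the claimed equality is untenable). The fourth-order operator $(\bar\partial\partial^\sharp)^*\bar\partial\partial^\sharp$ is attached to the \emph{complexified} direction $J\mathcal{L}_{X_h}J=\mathcal{L}_{JX_h}J$, i.e.\ to variations of the K\"ahler potential, and even there it comes out as a consequence of the moment-map identity rather than being an available input. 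Second, and independently: even with the correct composition, self-adjointness plus ``polarisation'' would only verify $\int_M h\,(d\mathrm{Sc})_J(A)\,\omega^n/n!=-\Omega_J(\mathcal{L}_{X_h}J,A)$ for $A$ in the image of the (complexified) infinitesimal action, whereas the moment-map property is a statement about \emph{every} $A\in T_J\mathcal{I}_\omega$; the tangent space contains genuine deformations of the complex structure transverse to the orbits (precisely the directions that make the quotient picture nontrivial), and an operator identity on $\mathcal{C}^\infty_0(M;\R)$ cannot see them. So the shortcut cannot replace the computation for arbitrary $A$. The fix is to push your first plan to the end: for $A\in T_J\mathcal{I}_\omega$ the variation $\delta g=\omega(\cdot,A\cdot)$ is $J$-anti-invariant, hence $\mathrm{tr}_g\delta g=0$ and $\langle\mathrm{Ric}_{g_J},\delta g\rangle=0$ pointwise (Ricci is $J$-invariant in the K\"ahler case), so after integration by parts only $\int_M\langle\nabla^2h,\delta g\rangle\,\omega^n/n!$ survives, i.e.\ only the $J$-anti-invariant part of the Hessian of $h$ enters; the one substantive step left is the pointwise identity expressing $\mathcal{L}_{X_h}J$ (equivalently $\bar\partial\partial^\sharp h$) through that anti-invariant Hessian, which yields $S^*=-JR$ and hence the theorem, with the Lichnerowicz operator appearing afterwards as $R^*R$, not as an ingredient.
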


This means we have two operators $R:\mathcal{C}^\infty_0(M;\mathbb R) \rightarrow T_J \mathcal{I}_{\omega}$ defined as $h \mapsto \mathcal{L}_{X_h}J$ (the infinitesimal action) and 
$S: T_J \mathcal{I}_{\omega}  \rightarrow  \mathcal{C}^\infty_0(M;\mathbb R)$ defined as $D(A)=DSc_{J}(A)$ (the derivative of the moment map), such that $(S(A),h)_{L^2}=-\Omega(R(h),A):=<JA,R(A)>$. That is, $S^*=-JR$.

Then the symplectic quotient $\mu^{-1}(0)/\mathcal{G}_\omega$ would be the "moduli space of cscK metrics in $[\omega]$". Let's us elaborate a bit more about this. If $F$ is a symplectomorphism, then $F^*\omega=\omega$, and $g_{F^{*}J}=F^{*}g$, {\itshape i.e.} nothing interesting is really happing about the metric (the tensor changes, but the underlying  metric spaces are still isometric). To do something metrically non trivial we should look at "the complexification of the symplectomorphisms". While such group does not exist, at least infinitesimally is clear how it should act: $$R(i h):=JR(h)=J\mathcal{L}_{X_h}J=\mathcal{L}_{JX_h}J.$$  Of course this  does not preserve the symplectic form, but instead, as we computed previously: $\mathcal{L}_{JX_h}\omega=2\sqrt{-1}\partial \bar \partial h,$ Thus it acts precisely by deforming within the K\"ahler class! So looking at zero of the moment map within a complexified orbit corresponds to the problem of searching for a KE metric (or more generally, for a csck metric) in the space of K\"ahler potentials as we did in previous sections. From this point of view, the Mabuchi functional should be seen as a convex norm in such orbit (see discussion of Kempf-Ness theorem in the next section), whose critical points are the zeros of the moment map.  Of course, if we find a $J$ within the complexified orbit which is inside the zero of the moment map ({\itshape i.e.} a cscK metric), we can apply Moser's theorem \cite{moser, cannasdasilva} to find a diffeomorphism which pull back the new symplectic K\"ahler form to the fixed symplectic background one (while keeping the abstract biholomorphism type of the complex manifold unchanged). Thus the complexified orbits for which we can find such points should be "special" (read "stable"), see previous discussion on "geodesic stability".

\begin{remark}
Note that smooth hypersurfaces in $\C\mathbb{P}^n$ are all symplectomorphic (for the natural symplectic structure induced by the Fubini-Study form), and the existence problem of KE metric on them fits the above picture.
\end{remark}

The next section is going to describe the classical  Geometric Invariant Theory (GIT) notion of stability. This is crucial to motivate algebraically  the notion of K-stability (that is, the rigorous notion of stability needed to make sense of the vague "stability" notion we described so far).

\subsection{Introduction to GIT}

For a gentle introduction see \cite{Thomas:intro GIT}. More advanced one are  \cite{Newsted_Tata_mod} and \cite{mumford}. The standard complete  reference is \cite{mumford-fogarty-kirwan}. The following example contains the main ideas of GIT:

\begin{example}\label{exa:basic-exa-GIT}
	Consider the action $\mathbb C^* \circlearrowleft \mathbb C^2$ given by
	$$ t \mapsto \left(\begin{matrix} t & 0 \\ 0 & t^{-1} \end{matrix}\right) . $$
	Note that the topological quotient $\mathbb C^2 /^{\text{top}} \mathbb C^*$ is not Hausdorff. From the algebro-geometric point of view, we have
	$$ \mathbb C^2 = \mathrm{Spec}\, \mathbb C[x,y] . $$
	We know what the functions on "$\mathbb C^2 / \mathbb C^*$" should be: simply look at the $\mathbb C^*$-invariant polynomials, that is, $\mathbb C[x,y]^{\mathbb C^*}=\mathbb C[xy]=\mathbb C[t]$. Then
	$$ \mathbb C^2 /\!\!/ \mathbb C^* = \mathrm{Spec}\, \mathbb C[x,y]^{\mathbb C^*} = \mathrm{Spec}\, \mathbb C[t] = \mathbb C . $$
	We have the map
	$$ \mathbb C^2 \to \mathbb C, \qquad (x,y) \mapsto xy . $$
	Substantially, you use invariants to define quotients.
\end{example}

More generally: take $G$ reductive acting as $G \circlearrowleft \mathrm{Spec}\, R=X$ on the algebraic variety $X$, where $R$ is a finitely generated graded $\mathbb C$-algebra. Then define
$$ X /\!\!/ G := \mathrm{Spec}\, R^G $$
where $R^G$ is finitely generated thanks to reductivity.

Consider now compact quotients.

\begin{definition}
	Let $G \subseteq \mathrm{SL}(N+1)$ act as $G \circlearrowleft X \subseteq \mathbb{CP}^N$, and assume that $G$ is reductive.
	Take $x\in X$, and let $\hat x$ be a lift to $\mathbb C^{N+1}$. We say that $x\in X$ is:
	\begin{itemize}
		\item {\em GIT-semistable} if $0\not\in \overline{G \cdot \hat x}$;
		\item {\em GIT-stable} if $G \cdot \hat x$ is closed and its stabilizer is finite;
		\item {\em GIT-polystable} if $G\cdot \hat x$ is closed.
	\end{itemize}
\end{definition}

This is the fundamental result of GIT:

\begin{theorem}[{\cite{mumford}}]
	Let $G \circlearrowleft X$ be reductive. Define the compact space by taking invariant \emph{sections}: 
	$$ Y := \mathrm{Proj}\, \bigoplus_k H^0(X;\mathcal{O}_X(k))^G . $$
	Then there exists
	$$ \Phi \colon X \supset X^{ss} \to Y =: X /\!\!/ G $$
	surjective such that, for $x,y \in X^{ss}$ the set of GIT-semistable points:
	$$ \Phi(x)=\Phi(y) \quad \text{ if and only if } \quad \overline{G \cdot x} \cap \overline{G \cdot y} \cap X^{ss} \neq \varnothing ; $$
	and, for $x,y \in X^s$ the set of GIT-stable points:
	$$ \Phi(x)=\Phi(y) \quad \text{ if and only if } \quad x=g \cdot y \text{ for some } g \in G . $$
\end{theorem}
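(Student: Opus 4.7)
The plan is to build $\Phi$ out of the graded-algebra inclusion $R^G \hookrightarrow R$, where $R := \bigoplus_k H^0(X;\mathcal{O}_X(k))$, and then analyze its fibres by reducing globally on $X^{ss}$ to the local situation of an affine $G$-variety, where the Reynolds operator gives separation of closed invariant subsets.

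First I would address the basic well-posedness of $Y$. To know that $Y = \mathrm{Proj}\, R^G$ is a projective variety and not merely a scheme of infinite type, one needs $R^G$ to be a finitely generated graded $\mathbb{C}$-algebra. This is the Hilbert--Nagata finiteness theorem for reductive groups and is the single deepest input (in characteristic zero it may be proved via Weyl's unitary trick, using the Reynolds operator $\mathrm{Rey}\colon R \to R^G$ built from averaging over a maximal compact subgroup $K\subseteq G$). I would take this as a black box; it is the main obstacle hidden in the statement.

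Next, the inclusion $R^G\hookrightarrow R$ defines a rational map $\Phi\colon X \dashrightarrow Y$, regular at $x$ precisely when some homogeneous $f \in R^G$ of positive degree satisfies $f(\hat x)\neq 0$ for a lift $\hat x\in \mathbb{C}^{N+1}$. Unwinding the definition of semistability, $0 \notin \overline{G\cdot\hat x}$ iff some positive-degree $G$-invariant polynomial is nonzero at $\hat x$, so the domain of regularity of $\Phi$ is exactly $X^{ss}$. For surjectivity, I would work on the standard affine cover: for each homogeneous $f \in R^G_+$, let $X_f := \{f\neq 0\}\subseteq X^{ss}$ and $Y_f := \mathrm{Spec}\, ((R_f)^G)_0$; these $Y_f$ cover $Y$, and $\Phi$ restricts to $X_f \to Y_f$ induced by the ring inclusion $((R_f)^G)_0 \hookrightarrow (R_f)_0$, which is surjective at the level of points by a standard Nullstellensatz argument on affine varieties.

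The crux is the fibre analysis on an affine piece. The key lemma, a consequence of the existence of the Reynolds operator, states: if $G$ is reductive and acts on an affine variety $V=\mathrm{Spec}\, A$, then any two disjoint $G$-invariant Zariski-closed subsets $Z_1,Z_2\subseteq V$ can be separated by a regular invariant $f\in A^G$. Applying this inside each $X_f$ yields, for any two $G$-orbits in $X_f$, that they map to the same point in $Y_f$ iff their closures meet in $X_f$. Gluing over the cover gives the semistable statement: $\Phi(x)=\Phi(y)$ iff $\overline{G\cdot x}\cap \overline{G\cdot y}\cap X^{ss}\neq \varnothing$.

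Finally, the stable case is a formal consequence: by definition, orbits of stable points are closed in $X^{ss}$, so the closure-intersection condition forces $\overline{G\cdot x}=G\cdot x = G\cdot y=\overline{G\cdot y}$, i.e.\ $x=g\cdot y$ for some $g\in G$. Assuming Hilbert--Nagata, the remaining work is essentially organizational: everything reduces to the separation lemma on each affine chart $X_f$, together with the observation that $X^{ss}$ is covered by such $G$-invariant affine opens.
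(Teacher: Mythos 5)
The paper gives no proof of this theorem at all — it is quoted from Mumford (with Newstead and Thomas as companion references) — so there is no internal argument to compare you with; your sketch is exactly the standard proof from those cited sources and is essentially correct: Hilbert--Nagata finiteness of $R^G$, the invariant affine charts $X_f=\{f\neq 0\}$ for homogeneous $f\in R^G$ of positive degree covering $X^{ss}$, the Reynolds-operator separation lemma on each chart, and the stable case via closedness of stable orbits in $X^{ss}$. Three details are stated a bit too quickly, though none is fatal. First, the equivalence of semistability of $x$ with the nonvanishing at $\hat x$ of some positive-degree homogeneous invariant is itself an application of the separation lemma, applied to the disjoint closed invariant sets $\{0\}$ and $\overline{G\cdot\hat x}$ in $\mathbb{C}^{N+1}$, plus the remark that homogeneous components of an invariant are invariant because the action is linear. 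Second, surjectivity of $X_f\to Y_f$ is not a pure Nullstellensatz fact: for a maximal ideal $\mathfrak{m}\subset A^G$ with $A=(R_f)_0$ one must first show $\mathfrak{m}A\neq A$, and this again needs the Reynolds operator (apply it to a relation $1=\sum a_i m_i$ to get $1\in\mathfrak{m}$); only afterwards does the Nullstellensatz produce a point of $X_f$ over $\mathfrak{m}$. Third, with the paper's definition of stability (closed orbit of the lift in $\mathbb{C}^{N+1}$ plus finite stabilizer), the closedness of $G\cdot x$ in $X^{ss}$ is a short standard lemma rather than literally part of the definition. Finally, when you glue, note that two semistable points need not lie in a common $X_f$: the implication from intersecting orbit closures to equal images is the easy direction ($\Phi$ is $G$-invariant and continuous, so it is constant on $\overline{G\cdot x}\cap X^{ss}$), while in the converse direction, if $\Phi(x)=\Phi(y)$ then a common chart does exist because $X_f=\Phi^{-1}(Y_f)$, and there your separation lemma applies as you describe.
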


There is only one closed orbit in semi-stable-equivalence classes, and it is the polystable orbit. Thus,
$ X^s / G \subseteq X^{ss} // G $
is a compactification of the moduli space of stable points, for which we can think at $X^{ss} // G$,  set-theoretically, as the moduli space of polystable objects. In Example \ref{exa:basic-exa-GIT}, the origin has to be considered as a polystable point. The points in the axes as semistable ones. The points in the hyperbolas $xy=s\neq0$ as stable.

We introduce now the {\em Hilbert-Mumford criterion}.
Let $G \circlearrowleft X \subseteq \mathbb{CP}^N$ reductive, and consider a $1$-parameter subgroup $\lambda \colon \mathbb C^* \to G$ of $G$. Take a point $x\in X$ and consider $\lim_{t \to 0} \lambda(t)\cdot x = x_0\in X$. Look at the complex line $\mathbb C_{x_0}$ over $x_0$. There is a $\mathbb C^*$-action on it: $\lambda(t) \circlearrowleft \mathbb C_{x_0}$ as some power $t^{-w}$, where $w$ is called the {\em weight}. Define
$$ \mu(x,\lambda):= w . $$

\begin{theorem}[{Hilbert-Mumford criterion \cite{mumford}}]
	Let $G \circlearrowleft X$ be reductive, and $x\in X$. Then:
	\begin{itemize}
		\item $x$ is GIT-stable if and only if $\mu(x,\lambda)>0$ for any $\lambda$ one-parameter subgroup.
		\item $x$ is GIT-semistable if and only if $\mu(x,\lambda)\geq0$ for any $\lambda$ one-parameter subgroup.
	\end{itemize}
\end{theorem}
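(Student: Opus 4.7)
\emph{Setup.} Fix a $1$-PS $\lambda\colon\C^*\to G\subseteq\mathrm{SL}(N+1)$ and decompose the lift $\hat x\in\C^{N+1}$ into $\lambda$-weight vectors, $\hat x=\sum_i v_i$ with $\lambda(t)v_i=t^{w_i}v_i$, the $w_i$ pairwise distinct and each $v_i\neq 0$. Set $w_-:=\min_i w_i$. Factoring out $t^{w_-}$ shows that the projective limit $x_0=\lim_{t\to 0}\lambda(t)\cdot x$ is represented by $v_{w_-}$, on which $\lambda(t)$ acts with weight $w_-$; comparing with the defining relation that $\lambda(t)$ acts on $\C_{x_0}$ as $t^{-\mu(x,\lambda)}$ gives $\mu(x,\lambda)=-w_-$. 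Hence $\lim_{t\to 0}\lambda(t)\hat x$ exists in $\C^{N+1}$ iff $w_-\geq 0$, equals $0$ iff $w_->0$, and otherwise equals the weight-zero component of $\hat x$.

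\emph{Necessity.} If some $\mu(x,\lambda)<0$ then $\lim_{t\to 0}\lambda(t)\hat x=0\in\overline{G\cdot\hat x}$, so $x$ cannot be semistable; this gives ``$\Rightarrow$'' in the semistable case by contraposition. For the stable case, suppose $\mu(x,\lambda)=0$, i.e.\ $w_-=0$. If $\hat x$ has a non-zero weight, the weight-zero component $v_0\neq \hat x$ is a fixed point of $\lambda$ that lies in $\overline{\lambda(\C^*)\hat x}\subseteq\overline{G\cdot\hat x}$; were $v_0\in G\cdot\hat x$, say $v_0=g\hat x$, then $g^{-1}\lambda g$ would fix $\hat x$ and contradict finiteness of the stabilizer, so the orbit is not closed. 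If instead $\hat x=v_0$, then $\lambda(\C^*)$ itself fixes $\hat x$, and the stabilizer is infinite. Either way $x$ is not stable.

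\emph{Sufficiency.} The non-trivial content is the \emph{fundamental lemma of GIT} (Mumford, sharpened by Kempf): for every $y\in\overline{G\cdot\hat x}$ there is a $1$-PS $\lambda$ of $G$ with $\lim_{t\to 0}\lambda(t)\hat x\in\overline{G\cdot y}$. Granting it: if $x$ is not semistable, apply the lemma to $y=0$ to produce $\lambda$ with $\lim=0$, hence $w_->0$ and $\mu(x,\lambda)<0$. If $x$ is semistable but not stable, either $G\cdot\hat x$ is not closed, and the lemma applied to a boundary point $y$ yields $\lambda$ whose limit lies in a strictly smaller orbit (and in particular does not equal $\hat x$), so $w_-\leq 0$ and $\mu(x,\lambda)\leq 0$; or the stabilizer is positive-dimensional, hence by reductivity contains a non-trivial $1$-PS $\lambda$, for which $\lambda(t)\hat x=\hat x$ and $\mu(x,\lambda)=0$.

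\emph{Main obstacle.} The hard step is the fundamental lemma: $1$-PS's are very rigid objects, whereas the degeneration $\hat x\leadsto y$ is a priori realised by an arbitrary algebraic arc $g(t)\in G$. The standard approach views such an arc as an element of $G(\C(\!(t)\!))$ and applies the Iwahori--Cartan decomposition for reductive groups over the local field $\C(\!(t)\!)$ to write $g(t)=k_1(t)\,\lambda(t)\,k_2(t)$ with $k_i\in G(\C[\![t]\!])$ (hence bounded as $t\to 0$) and $\lambda$ an honest $1$-PS of $G$; the bounded factors are then absorbed into the orbit of $y$, placing $\lim_{t\to 0}\lambda(t)\hat x$ in $\overline{G\cdot y}$. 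This input from the structure theory of reductive groups over local fields is what really powers the criterion -- everything else is bookkeeping with weights.
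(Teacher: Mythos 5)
Your route is the standard one -- weight decomposition for the easy implications, the fundamental lemma plus the Iwahori/Cartan decomposition over $\C(\!(t)\!)$ for the hard converse -- and it is exactly the argument of the reference the paper defers to: the paper itself only remarks that ``stable $\Rightarrow\mu>0$'' is elementary and cites \cite[Theorem 2.1]{mumford-fogarty-kirwan} for the rest. Two points in your converse direction need attention. The lesser one is a sign slip: in the non-closed-orbit branch it is the \emph{existence} of $\lim_{t\to 0}\lambda(t)\hat x$ that forces $w_-\geq 0$ (not $w_-\leq 0$), and this is what gives $\mu(x,\lambda)=-w_-\leq 0$; as written your two inequalities are inconsistent with $\mu=-w_-$.

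The more serious issue is the step ``the stabilizer is positive-dimensional, hence by reductivity contains a non-trivial $1$-PS''. Reductivity of $G$ does not pass to stabilizers, and a positive-dimensional stabilizer may be unipotent, in which case it contains no non-trivial one-parameter subgroup at all (for $\mathrm{SL}(2;\C)\circlearrowleft\C^2$ the stabilizer of $e_1$ is the one-dimensional unipotent group of upper-triangular unipotent matrices). So the step as stated would fail. It can be repaired: in that branch you may assume $G\cdot\hat x$ is closed (otherwise the previous branch applies), a closed orbit in $\C^{N+1}$ is an affine homogeneous space $G/G_{\hat x}$, and Matsushima's criterion then says $G_{\hat x}$ is reductive; being positive-dimensional it contains a non-trivial torus, hence a $1$-PS fixing $\hat x$, giving $\mu=0$. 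Alternatively -- and closer to Mumford's actual proof -- characterize stability by properness of the orbit map $G\to\C^{N+1}$, $g\mapsto g\cdot\hat x$, and apply the valuative criterion of properness together with the Iwahori decomposition to an arc witnessing non-properness; this treats the non-closed-orbit and infinite-stabilizer failures uniformly and avoids Matsushima altogether. With either repair, and granting the fundamental lemma (which, as you say, is the real content), your sketch is complete.
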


The implication "$x$ stable implies $\mu(x,\lambda)>0$ for any $\lambda$" follows directly by "pictures". The implication "$\mu(x,\lambda)\geq0$ for any $\lambda$ implies $x$ semistable" is hard; see \cite[Theorem 2.1]{mumford-fogarty-kirwan}.

\begin{example}
	For example, for $\mathrm{SL}(2;\mathbb C)$ acting on binary forms identified with $\mathrm{Sym}^2(\mathbb C^2)$, GIT-stability is related to polynomials with multiple roots. This is a nice exercise to do in order to practice with GIT stability (use Hilbert-Mumford criterion).
\end{example}

We are now going to quickly review the \emph{Kempf-Ness theorem}, which relate finite dimensional symplectic quotient to GIT. Moreover, we discuss the relation with the infinite dimensional picture of the scalar curvature as a moment map.

Look at $\mathbb C^* \circlearrowleft (\mathbb C^2, \omega_{\text{std}})$, with $\mathrm{U}(1) \subset \mathbb C^*$ Hamitonian  action ($h=|x|^2 - |y|^2$). We clearly get $$\mathbb C^2 /^{\text{sym}} \mathrm{U}(1) \stackrel{\text{top}}{\simeq} \mathbb C^2 /\!\!/ \mathbb C^*,$$
{\itshape i.e.} the GIT quotient and the symplectic quotient are identified!

More generally, the Kempf-Ness theorem states that if we have some compact  $K\subseteq\mathrm{SU}(N+1)$  with $K^{\mathbb C}\subseteq\mathrm{SL}(N+1)$,  and  $K$ acting symplectically on $(X, \omega_{FS})$ with moment map $\mu(Z)=\sqrt{-1} \frac{Z_j\bar Z_j}{|Z|^2}$. Then
$$X/\!\!/K^{\mathbb C}\cong X/^{\text{sym}}K .$$
The  idea of the proof consists in studying the \emph{log norm} function $m=\log|g\cdot \hat{x}|^2$ on $K^{\C}/K$. The  crucial point is that such function is \emph{convex} on the complexified orbits, and its critical points coincide with the zero of the moment map. Thus, if the slope at infinity of $m$  is positive (which holds  if the orbit is stable, since the weights at infinity are positive), we can find a critical point (that is, a zero of the moment map).

Let us go back to the infinite dimensional case. In such situation the log norm corresponds to  the Mabuchi energy on the "complexified orbits" of the symplectomorphisms. Thus the complexified orbits which intersect the zero of the moment map (cscK) should be "stable".  Thus, in this infinite dimensional setting, we should still have:
$$ `` \mu^{-1}(0)/\mathcal{G}_{\omega}\cong \mathcal{I}_{\omega}^{\text{stab}}/\mathcal{G}_{\omega}^{\C}=: \mathcal{I}_{\omega} /\!\!/\mathcal{G}_{\omega}^{\C} " .$$
In the KE case, the rigorous notion of stability condition is going to be, of course, K-stability.  $``\mathcal{I}_{\omega} /\!\!/\mathcal{G}_{\omega}^{\C}"$ should be a purely \emph{algebraic}  moduli spaces of stable varieties (K-moduli), which could also be rigorously constructed in the Fano case ({\itshape e.g.} see the survey \cite{Spotti_Cortona_moduli}).

\begin{remark}
	This moduli picture can be thought as an higher dimensional analogous of the identification between moduli spaces of metric with constant Gauss curvature on surface of genus $g$ and the complex Deligne-Mumford moduli space of curves.
\end{remark}

We are now ready to finally define $K$-stabilty.

\subsection{Definition of K-stability}

For $(X,K_X^{-1})$ Fano manifold, we want to define a "GIT-like" notion of stability, following the dictionary:
\begin{itemize}
	\item $1$-parameter subgroups $\lambda$ correspond to test configurations $\mathcal{X}\to \mathbb C$;
	\item weight $\mu(x,\lambda)$ corresponds to Donaldson-Futaki invariant $DF(X,\mathcal{X})$.
\end{itemize}

In analogy with the Hilbert-Mumford criterion, we define:
\begin{itemize}
	\item $X$ is {\em K-stable} if $DF(X,\mathcal{X})>0$ for any test-configuration $\mathcal{X}$;
	\item $X$ is {\em K-semistable} if $DF(X,\mathcal{X})\geq0$ for any test-configuration $\mathcal{X}$; 
	\item $X$ is {\em K-polystable} if $X$ is K-semistable and $DF(X,\mathcal{X})=0$ if and only if $\mathcal{X}=X \times\mathbb{C}$.
\end{itemize}

So let us define what is a test configuration and what is the Donaldson-Futaki invariant.

A {\em test configuration} of exponent $\ell\in\mathbb N$ is a flat family
$$
\xymatrix{
	\mathcal{L} \ar[d]_{\mathbb C^* \circlearrowleft} & \\
	\mathcal{X} \ar[r]_{\pi} & \mathbb{C}
} $$
such that, for $X_1=\pi^{-1}(1)$:
\begin{enumerate}
	\item $\mathcal{L}$ is relative ample and $\mathcal{L}\lfloor_{X_1}\simeq\ K_{X_1}^{-\ell}$;
	\item $\pi$ is $\mathbb C^*$-equivariant.
\end{enumerate}

Although a priori quite abstract, test configurations are concrete:
\begin{theorem}[{Ross-Thomas \cite{ross-thomas}}]
	All test configurations are realized by $1$-parameters subgroups in $PGL(N+1)$ for embedding of $X \hookrightarrow \mathbb{CP}^N$.
\end{theorem}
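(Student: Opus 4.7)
The plan is to convert the abstract $\mathbb{C}^*$-equivariant family into a concrete projective one by embedding it equivariantly into $\mathbb{CP}^N\times\mathbb{C}$ using sections of a high power of $\mathcal{L}$. The resulting $\mathbb{C}^*$-action on the ambient space is then automatically a one-parameter subgroup of $PGL(N+1)$.

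First I would exploit relative ampleness. By Serre vanishing applied fiberwise together with flatness of $\pi$, for some sufficiently large $m$ the higher direct images $R^i\pi_*\mathcal{L}^{\otimes m}$ vanish for $i>0$, and cohomology-and-base-change shows that $E:=\pi_*\mathcal{L}^{\otimes m}$ is locally free on $\mathbb{C}$ of rank $N+1:=h^0(X_1,K_{X_1}^{-m\ell})$. Since $E$ is an algebraic vector bundle over the affine line, it is trivial. Enlarging $m$ further if necessary, $\mathcal{L}^{\otimes m}$ becomes very ample on each fiber, so the relative evaluation map yields a closed $\mathbb{C}$-embedding
\[ \iota\colon \mathcal{X}\hookrightarrow \mathbb{P}(E^{\vee})\cong \mathbb{CP}^N\times\mathbb{C} \]
which restricts on each fiber $X_t$ to the embedding defined by the complete linear system of $\mathcal{L}^{\otimes m}|_{X_t}$.

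Next I would lift the $\mathbb{C}^*$-action linearly. The action on $\mathcal{X}$ covers the standard scaling on $\mathbb{C}$, and after replacing $\mathcal{L}$ by a further power we may assume it lifts to a genuine $\mathbb{C}^*$-linearization on $\mathcal{L}$ (the obstruction lives in a finitely generated character group and is killed by a power, which is harmless since $\ell$ was arbitrary). Pushing forward produces a compatible linear $\mathbb{C}^*$-action on $E$, and under the trivialization $E\cong V\otimes_{\mathbb{C}}\mathcal{O}_{\mathbb{C}}$ with $V:=E_0$, this action reduces to a representation $\lambda\colon \mathbb{C}^*\to GL(V)$. The embedding $\iota$ is $\mathbb{C}^*$-equivariant by construction, so the projectivization $\bar\lambda\colon \mathbb{C}^*\to PGL(N+1)$ is a one-parameter subgroup with $\bar\lambda(t)\cdot X_1=X_t$ for every $t\in\mathbb{C}^*$, where $X_1$ is embedded by $|K_{X_1}^{-m\ell}|$. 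The total space $\mathcal{X}$ is then recovered as the scheme-theoretic closure of $\bigcup_{t\in\mathbb{C}^*}(\bar\lambda(t)\cdot X_1)\times\{t\}$ in $\mathbb{CP}^N\times\mathbb{C}$: flatness of $\pi$ guarantees that $\mathcal{X}$ has no embedded components over $0$, so it is determined by its generic fiber. Hence $(\mathcal{X},\mathcal{L}^{\otimes m})$ is the test configuration generated by the one-parameter subgroup $\bar\lambda\subset PGL(N+1)$.

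The main obstacle will be the linearization step: the abstract $\mathbb{C}^*$-action on $\mathcal{X}$ lifts only projectively to $\mathcal{L}$ in general, and promoting it to a genuine linear action requires passing to a suitable divisible power. This is the one point where the flexibility in the exponent $\ell$ of the definition of a test configuration is genuinely used; everything else is a formal consequence of relative ampleness, flatness, and the triviality of algebraic vector bundles on $\mathbb{C}$.
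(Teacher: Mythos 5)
The paper itself gives no proof of this statement: it is quoted directly from Ross--Thomas with a citation, so there is nothing internal to compare against. Your argument is correct and is essentially the standard Ross--Thomas proof: relative Serre vanishing plus cohomology-and-base-change to make $E=\pi_*\mathcal{L}^{\otimes m}$ locally free, triviality of bundles on $\mathbb{C}$, the relative embedding into $\mathbb{CP}^N\times\mathbb{C}$, and recovery of $\mathcal{X}$ as the closure of the $\bar\lambda$-orbit of $X_1$ using flatness over the PID $\mathbb{C}[t]$ (no $t$-torsion). Two small remarks. First, with the definition of test configuration used in this paper the $\mathbb{C}^*$-action is already given on $\mathcal{L}$, not just on $\mathcal{X}$, so your linearization step is unnecessary here (though it is the right thing to say for the weaker definition where only the action on the total space is assumed; the statement that a power of any line bundle on a normal variety with $\mathbb{C}^*$-action is linearizable is Mumford's). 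Second, to say that the pushed-forward action ``reduces to a representation $\lambda\colon\mathbb{C}^*\to GL(V)$'' you should choose the trivialization $E\cong V\otimes\mathcal{O}_{\mathbb{C}}$ equivariantly --- e.g.\ decompose the graded $\mathbb{C}[t]$-module $H^0(\mathcal{X},\mathcal{L}^{\otimes m})$ into weight spaces, or equivalently use that a $\mathbb{C}^*$-equivariant locally free sheaf on $\mathbb{A}^1$ is equivariantly the pullback of its fibre $V=E_0$ over the fixed point; with an arbitrary trivialization the action is only a $t$-dependent family of linear maps. With that made explicit, the proof is complete and agrees with the argument the cited theorem rests on.
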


\begin{remark}
	Note that $N$ in the above result is not fixed! In principle, we should consider test configurations arising as $N\rightarrow +\infty$. This is why K-stability is very hard to check! K-stability should be considered as a  (new, non-GIT) stability condition defined on the "stack" of Fano varieties, where the $DF$ invariant can be seen as the weigh of a stacky line bundle, the so-called CM line \cite{paul-tian}. This fancier approach is relevant for precise moduli considerations.
\end{remark}

The next  is an example of a test configuration for $\C \mathbb{P}^1$.

\begin{example}\label{ex:Kstable}
	For example, consider $X=\{xy=z^2\}$ obtained as the image of the Veronese embedding $\mathbb{CP}^1\ni[x:y] \mapsto[x^2:y^2:xy]\in\mathbb{CP}^2$. Conider the $\mathbb C^*$-action
	$$ \lambda(t) = \left(\begin{matrix}
	t & & \\
	& 1 & \\
	& & t^{-1}
	\end{matrix}\right) , $$
	and define
	$$ X_t := \lambda(t)\cdot X = \{xy=tz^2\} . $$
	As $t\to0$, we get $X_0=\{xy=0\}$.
\end{example}

Now we introduce  the {\em Donaldson-Futaki invariant}. Consider a test configuration. We get an action $\mathbb C^* \circlearrowleft X_0$ inducing an action $\mathbb C^* \circlearrowleft H^0(X_0;\mathcal{L}\lfloor_{X_0}^{\otimes k})$ for any $k$. By the Riemann–Roch theorem, we write the Hilbert polynomial
$$ h_k:=\dim H^0(X_1;\mathcal{L}\lfloor_{X_1}^{\otimes k}) = a_0 k^n+a_1 k^{n-1}+\mathcal{O}(k^{n-2}) . $$
Similarly, we write the weight for the action $\mathbb C^* \circlearrowleft \wedge^{\text{top}}H^0(X_0;\mathcal{L}_0^{\otimes k})$ as:
$$ w_k = b_0 k^{n+1} + b_1 k^n + \mathcal{O}(k^{n-1}). $$
Set the Donaldson-Fujiki invariant as
$$ DF(X,\mathcal{X}) :=\frac{a_1b_0-a_0b_1}{a_0} . $$

Let us compute the $DF$ invariant for the previous Example \ref{ex:Kstable} of test configuration.

\begin{example}
	
	Recall that $X_0=\mathrm{Proj}\,\mathbb{C}[x,y,z]/\langle xy \rangle $.
	We have
	\begin{eqnarray*}
		H^0(\mathcal{O}_{X_0}(1)) &=& \langle x,y,z \rangle ,\\
		H^0(\mathcal{O}_{X_0}(2)) &=& \langle x^2, xz, y^2, yz, z^2 \rangle ,\\
		H^0(\mathcal{O}_{X_0}(3)) &=& \langle x^3, x^2z, xz^2, y^3, y^2z, yz^2, z^3 \rangle ,
	\end{eqnarray*}
	and so on.
	In general,
	\begin{eqnarray*}
		h_k&:=&\dim\{\text{polynomials of degree }k\}-\dim\{\text{polynomials of degree }k-2\} \\
		&=& 2k+1 .
	\end{eqnarray*}
	Similar computations for the weight give:
	$$ w_k=\frac{k(k-1)}{2} . $$
	At the end, we get:
	$$ DF(\mathbb{CP}^1, \mathcal{X}) = \left(\frac{1}{2}+2\cdot\frac{1}{2}\right)\cdot\frac{1}{2}=\frac{3}{4}>0 , $$
	as it should be, since $X$ is $K$-stable because it is KE.
\end{example}

Finally, let us see that at least for smooth central fiber, the $DF$ invariant recovers the classical Futaki invariant.

\begin{proposition}\label{DF=F}
	If the central fiber $X_0$ is smooth, then the algebro-geometric Futaki invariant $F$ and the $DF$  invariant are essentially the same: if $Z$ generates the $\C^\ast$-action 
	$$ DF(X_0, \mathcal{X})=\frac{1}{4\pi} F(Z) . $$
\end{proposition}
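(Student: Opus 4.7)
The plan is to apply (equivariant) Hirzebruch-Riemann-Roch on the smooth central fiber $X_0$, extract the asymptotic expansions of $h_k$ and $w_k$, and show that the combination $(a_1 b_0 - a_0 b_1)/a_0$ reproduces the Futaki integral \eqref{eq:futaki} up to the factor $1/(4\pi)$. First, since $\mathcal{L}_0\simeq K_{X_1}^{-\ell}$ is ample and $X_0$ is smooth, Kodaira vanishing gives $h_k=\chi(X_0,\mathcal{L}_0^{\otimes k})$ for $k\gg 0$, and ordinary HRR yields
\[
h_k = \int_{X_0} e^{k c_1(\mathcal{L}_0)}\,\mathrm{Td}(X_0) = \frac{k^n}{n!}\int_{X_0} c_1(\mathcal{L}_0)^n + \frac{k^{n-1}}{2(n-1)!}\int_{X_0} c_1(X_0)\,c_1(\mathcal{L}_0)^{n-1} + O(k^{n-2}),
\]
so $a_0, a_1$ are explicit intersection numbers, with $a_1/a_0$ proportional to the cohomological average scalar curvature $\hat S$ of any metric $\omega\in 2\pi c_1(\mathcal{L}_0)$.

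Second, fix a $\mathbb{C}^*$-invariant Hermitian metric on $\mathcal{L}_0$ with curvature $\omega$, and let $H:X_0\to\mathbb{R}$ be a Hamiltonian for the $S^1$-generator, $dH=-\iota_{\Re Z}\omega$. Equivariant Hirzebruch-Riemann-Roch (or Atiyah-Bott localisation applied to the $\bar\partial$-Dolbeault complex twisted by $\mathcal{L}_0^{\otimes k}$) expresses the weight on $H^0(X_0,\mathcal{L}_0^{\otimes k})$ as
\[
w_k = \int_{X_0} kH\,e^{k c_1(\mathcal{L}_0)}\,\mathrm{Td}(X_0) + \int_{X_0} e^{k c_1(\mathcal{L}_0)}\,\theta_Z,
\]
where $\theta_Z$ is the $s$-linear coefficient of the equivariant Todd class $\mathrm{Td}^T(TX_0)$, obtained by Chern-Weil applied to a $\mathbb{C}^*$-invariant connection on $TX_0$. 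Expanding in powers of $k$ gives $b_0=(1/n!)\int_{X_0}H\,c_1(\mathcal{L}_0)^n$ and a two-term formula for $b_1$: one piece from $H\cdot c_1(X_0)\cdot c_1(\mathcal{L}_0)^{n-1}$, plus a correction from $\theta_Z\cdot c_1(\mathcal{L}_0)^n$.

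Third, the key Chern-Weil computation (cf.\ the analogous argument in Donaldson, \emph{Scalar curvature and stability of toric varieties}) identifies the $\theta_Z$-piece with a multiple of $\int_{X_0}\Delta_\omega H\,c_1(\mathcal{L}_0)^n$; together with $c_1(X_0)=\mathrm{Ric}(\omega)/(2\pi)$ and the pointwise identity $n\,\mathrm{Ric}(\omega)\wedge\omega^{n-1}=\mathrm{Sc}(\omega)\omega^n$, this assembles
\[
b_1 = \frac{a_1}{a_0}\,b_0 \;-\; \frac{1}{4\pi\,n!}\int_{X_0} H\,(\mathrm{Sc}(\omega)-\hat S)\,\omega^n,
\]
so the leading pieces of $(a_1 b_0-a_0 b_1)/a_0$ cancel and
\[
DF(X_0,\mathcal{X}) \;=\; \frac{1}{4\pi\,n!}\int_{X_0} H\,(\mathrm{Sc}(\omega)-\hat S)\,\omega^n.
\]
By the Matsushima-type description in Section \ref{sec:matsushima}, $Z\in\mathfrak{h}_0$ has the form $Z=\partial^\sharp f$ with $\Re f$ equal to $H$ up to a universal factor, and the right-hand side is therefore exactly $\tfrac{1}{4\pi}F(Z,\omega)$ in the notation of \eqref{eq:futaki}.

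The main obstacle lies in step three: one must execute the Chern-Weil computation for $\theta_Z$ carefully enough to see that the equivariant Todd correction assembles precisely into the scalar curvature difference $\mathrm{Sc}(\omega)-\hat S$ (and not some unrelated combination of $\Delta_\omega H$ and Ricci terms), and then track all normalising factors ($2\pi$ from $c_1$ versus $\omega$, $\ell$ from $\mathcal{L}_0\simeq K_{X_1}^{-\ell}$, $n!$ from the Chern character expansion, and the sign convention for the Hamiltonian) to land on exactly the $1/(4\pi)$ stated.
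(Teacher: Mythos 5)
Your proposal is correct in outline, but it takes a genuinely different route from the paper. The paper argues via K\"ahler quantization: it embeds $X_0$ by $L^2$-orthonormal sections, uses the Tian--Zelditch--Catlin--Lu expansion of the Bergman kernel $b_{h^k}=1+\frac{\mathrm{Sc}_\omega}{4\pi}k^{-1}+O(k^{-2})$ to read off $a_0,a_1$, and then compares the Hamiltonian of the $S^1$-action on $(X_0,\omega)$ with the induced Fubini--Study Hamiltonians under the Tian embeddings to extract $b_0,b_1$; the single hard input is the Bergman kernel expansion. You instead compute $h_k$ by ordinary Hirzebruch--Riemann--Roch (with Kodaira vanishing) and $w_k$ by equivariant Riemann--Roch/localisation, so all coefficients appear directly as (equivariant) intersection numbers, and the analytic input is replaced by the Chern--Weil identification of the equivariant extension of $c_1(X_0)$ as $\bigl(\mathrm{Ric}(\omega)/2\pi,\ \tfrac12\Delta_\omega H\bigr)$ up to normalisation. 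Your route is purely cohomological and avoids the deep asymptotic expansion, at the cost of the normalisation bookkeeping you flag in step three; the paper's route has the advantage of displaying the quantization/balanced-metric picture that reappears later (partial $\mathcal{C}^0$-estimates, Donaldson--Sun). One point you can and should make explicit, which also removes most of the worry in your step three: the $\theta_Z$-correction you identify with a multiple of $\int_{X_0}\Delta_\omega H\,c_1(\mathcal{L}_0)^n$ vanishes outright, since $\int_{X_0}\Delta_\omega H\,\omega^n=0$ on a closed manifold; hence $b_1$ is carried entirely by the $H\,c_1(X_0)\,c_1(\mathcal{L}_0)^{n-1}$ term, and the cancellation against $\tfrac{a_1}{a_0}b_0$ yields exactly $\int_{X_0}H(\mathrm{Sc}_\omega-\hat S)\,\omega^n$ up to the stated constant, the remaining sign being fixed by the weight convention $t^{-w}$ (compare the paper's $b_0=-\int_{X_0} f\,\omega^n/n!$ and $b_1=-\frac{1}{4\pi}\int_{X_0}\mathrm{Sc}_\omega\,\omega^n/n!$, and the identification of your Hamiltonian $H$ with the potential $f$ of $Z=\partial^\sharp f$).
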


\begin{proof}
	For more details on this approach via {\em K\"ahler quantization}, see {\itshape e.g.} \cite[Proposition 7.15]{gabor-book}.
	
	Take $L\to (X_0,\omega)$ be a K\"ahler manifold with $L$ ample. Consider $H^0(X_0;L^{\otimes k})$ and take $\underline{s}^k=(s_0^k, \ldots, s_{N_k}^k)$ an orthonormal basis of holomorphic sections with respect to the $L^2$-metric induced by $\omega$.
	Denote by $h$ the Hermitian structure on $L$ with positive curvature $\omega$.
	Introduce the {\em Bergman kernel}
	$$ b_h(p)=\sum_{j=0}^{N_k} |s_j^k(p)|^2_h >0 , $$
	where $N_k+1=\dim H^0(X_0;L^{\otimes k})$.
	It is easy to see that does not depend on the choice of the orthonormal basis.
	
	Since $L$ is ample,
	$$ \varphi_{\underline{s}^k} \colon X_0 \hookrightarrow \mathbb{CP}^{N_k}, \qquad p \mapsto [s_0^k(p) : \cdots : s_{N_k}^k(p)] $$
	is well-defined for $k\gg1$, and we compute
	$$ \varphi^*_{\underline{s}^k} \omega_{FS} = k\omega + \sqrt{-1} \partial\overline\partial \log b_{h^n}.$$

	The important (and not trivial!) fact proven by Tian \cite{tian-JDG90}, Ruan \cite{ruan}, Zelditch \cite{zelditch}, Lu \cite{lu}, Catlin \cite{catlin} is that
	$$ b_{h^k}=1+\frac{\mathrm{Sc}_\omega}{4\pi}\cdot\frac{1}{k}+\mathcal{O}\left(\frac{1}{k^2}\right) $$
	As a consequence,
	$$ \omega = \frac{1}{2\pi k}\varphi^*_k\omega_{FS}+\mathcal{O}\left(\frac{1}{k^2}\right) , $$
	and  we recover the Hilbert polynomial expression:
	$$ h_k = k^n \int_{X_0} \frac{\omega^n}{n!} + \frac{k^{n-1}}{4\pi}\int_{X_0} \mathrm{Sc}_\omega \frac{\omega^n}{n!} + \mathcal{O}(k^{n-2}).$$
	
	This gives
	$$ a_0 = \int_{X_0} \frac{\omega^n}{n!}, \qquad a_1 = \frac{1}{4\pi} \int_{X_0} \mathrm{Sc}_\omega \frac{\omega^n}{n!} $$
	
	Let us do similar computation for the weights. We assume that the $\mathbb C^*$ action comes from complexification of a $\mathrm{U}(1)$-action for the embedding:
	$$
	\xymatrix{
		\mathbb C^* \ar@{^{(}->}[r] & \mathrm{SL}(n+1) \ar[r] & \mathrm{Aut}(X_0 \subseteq \mathbb{CP}^{n+1}) \\
		\mathrm{U}(1) \ar@{^{(}->}[u] \ar@{^{(}->}[r] & \mathrm{U}(n+1) . \ar@{^{(}->}[u] & 
	}
	$$
	Recall that the moment map for $\omega_{FS}$ and the $U(n+1)$-action is $\mu(Z)=\frac{\sqrt{-1}Z_i\bar Z_j}{|Z|^2}$. Let $f$ Hamiltonian for $\omega$, and  $f_k=-2\pi (A_k)_{i,j} \frac{(s_i^k,s_j^k)_{h^k}}{b_h^k}$ is Hamiltonian for $\omega_{FS|k}$. Then
	$$ f-\frac{1}{2\pi k}f_k=\mathcal{O}(k^{-2}) . $$
	
	We compute
	\begin{eqnarray*}
		\int_{X_0} f b_{h^k} \frac{\omega^n}{n!} &=& \frac{1}{2\pi k} \int_{X_0} f_k b_{h^k} \frac{(k\omega)^n}{k^nn!}+\mathcal{O}(k^{-2}) \\
		&=& \frac{1}{2\pi k} \int_{X_0} f \left(1+\frac{\mathrm{Sc}_\omega}{4\pi} \frac{1}{k}+ \mathcal{O}(k^{-2})\right)\frac{\omega^n}{n!} .
	\end{eqnarray*}
	Then
	$$ w_k = -k^{n+1} \left( \int_{X_0} f \frac{\omega^n}{n!} + \frac{1}{4\pi k} \int_{X_0} \mathrm{Sc}_\omega\omega^n + \mathcal{O}(k^{-2})\right) , $$
	whence we get
	$$ b_0=-\int_{X_0} f \frac{\omega^n}{n!}, \qquad b_1=-\frac{1}{4\pi k} \int_{X_0} \mathrm{Sc}_\omega\omega^n . $$
	By plugging-in these expressions in $DF$, we get the statement.
\end{proof}

\begin{remark}
	\begin{itemize}
		\item	In the Fano case, the same holds for mildly singular (klt) central fiber $X_0$.
		\item By analyzing in deeper detail the K\"ahler quantization picture, Donaldson proved  \cite{donaldson-JDG01} that cscK manifolds with no holomorphic vector fields are asymptotically balanced, condition which coincides with certain asymptotic GIT stability (Chow stability). However, there are examples which show that asymptotic Chow stability is not enough to capture the cscK or KE existence problem in general.
		\item With similar technique, Donaldson showed \cite{donaldson-JDG05} a lower bound on the $L^2$ norm of the scalar curvature: $\|\mathrm{Sc}_{\omega}-\overline{\mathrm{Sc}}\|_{L^2}\geq -DF/\|\mathcal{X}\|$. This is clearly enough to conclude K-semistability for the general cscK case (improved to K-polystability by Stoppa in the case with no holomorphic vector fields \cite{stoppa-AdvMath})
	\end{itemize}
\end{remark}

With the above results we have closed the circle between the rough "geodesic stability" and the properly defined algebraic notion of K-stability. Moreover, the infinite moment map picture makes quite clear that it is natural to expect that such stability condition can precisely capture the existence problem. As we will discuss in the next section, this is precisely the case in our Fano situation.

\section{Equivalence between existence of K\"ahler-Einstein and K-stability}

The goal of this last section is to describe some of the ideas in the proof of equivalence between existence of KE metrics on Fano manifolds and K-stability, as well as to shortly describe compact moduli spaces of such manifolds and some explicit examples.

\subsection{Berman's result}

In \cite{berman-Inv} it is proved that K-polystability is a necessary condition for the existence of a KE metrics on smooth (or even mildly singular) Fanos. Previous results in this direction are given by works of Tian, Donaldson, and Stoppa.

We do not discuss in detail Berman's proof, but we simply emphasize the main points. The argument is based on a \emph{crucial formula} for the Donaldson-Futaki invariant:

\begin{equation}\label{CF}
DF(X,\mathcal{X})=\lim_{t\rightarrow \infty}\frac{d}{dt}\mathcal{D}_\omega(\varphi_t)+ \epsilon.
\end{equation}

Let us explain its meaning.  Here $\mathcal{X}\rightarrow \Delta_\tau\subseteq \C^*$ is a test configuration for a Fano variety $X\cong X_1$. Starting from a K\"ahler metric $\omega_\phi=\omega+\sqrt{-1} \partial\overline\partial \phi$ on $X_1$  one can construct a weak geodesic ray in the Mabuchi metric ({\itshape i.e.} a weak solution of the homogeneus Monge-Ampère equation, compare Remark \ref{rmk:donaldson-semmes-chma}) {with boundary datum $\phi$. Let us denote with  $\phi_t:=\rho_\tau^*\psi_\tau$ such geodesic ray emanating from $\phi$, where $t=-\log|\tau|^2$ and $\rho_\tau$ is the $\C^*$-action which identifies the  general fibres. 
	
	$\mathcal{D}_\omega$ denotes the {\em Ding functional} \cite{ding}. The  Ding functional has  the advantage that it can be defined for metric of less regularity with respect to the Mabuchi energy. It has the property that its  critical points are again the KE metrics as for the Mabuchi functional.  It takes the explicit form
	$$ \mathcal{D}_\omega (\varphi) = - \frac{1}{(n+1)!} \sum_{j=0}^{n} \dashint_X \varphi \omega_{\varphi}^j \wedge \omega^{n-j} -  \log \left( \dashint_X \exp (f-\varphi)\omega^n \right),$$
where $f$ is the Ricci potential of $\omega$.

	The Berman's formula thus states that, similarly to what we discussed for the Mabuchi's energy, the Donaldson-Futaki invariant is the slope at infinity in the space of K\"ahler potentials of the Ding functional plus an error term denoted by $\epsilon$ which has the crucial property of being \emph{non-negative}. Actually the term  $\epsilon$ can be explicitly given in terms of purely algebro-geometric data of the test-configuration and its resolutions. 
	
	Thanks to the this formula \eqref{CF}, the idea of proof is now very clear. If we start the geodesic ray $\phi_t$ from the KE potential $\phi_0=\phi^{KE}$, using that the KE equation is the Euler-Lagrange equation of the Ding functional, we have $\frac{d}{dt}\mathcal{D}_\omega(\phi_t)_{|t=0}=0$ (more precisely, Berman proved that is non-negative, the issue being related to regularity properties of the ray). Moreover, since also the Ding functional is convex along the ray, we have that its slope at infinity $\lim_{t \rightarrow \infty}\frac{d}{dt}\mathcal{D}_\omega(\phi_t)$ is still non negative. Hence, by the formula and the positivity of the error term, we find that the Donaldson-Futaki invariant of an arbitrary test configuration is non-negative ({\itshape i.e.} $X$ is K-semistable). It is then possible to analyze in further details what happens in the the case of DF invariant equal to zero, to conclude that the error term vanishes and $X_1$ is biholomorphic to $X_0$ (the point is that one can show that the Ding function has to be affine, and the test configuration generated by an holomorphic vector field).

	\begin{remark}
		\begin{itemize}
			\item The Ding functional can be defined also for mildly singular Fano varieties (more precisely klt Fanos, see later). Thus Berman's formula gives K-polystability also for such very natural class of singular varieties. As we will see, this is the class of singularities which need to be considered for moduli spaces compactification of smooth KE Fano manifolds.
			\item The formula for $\epsilon$ was used to define the notion of \emph{Ding stability}, a notion that is {\itshape a priori} stronger but then equivalent to K-stability. This notion has been relevant for some of the newest developments in the area, for example in the works of K. Fujita ({\itshape e.g.} \cite{fujita-arxiv1508, fujita-arxiv1602}).
		\end{itemize}
	\end{remark}

	\subsection{K-polystability implies existence of KE metrics: Donaldson's cone angle continuity path}
	
	The goal of the next sections is to give a ”map” of the main steps in the proof of the existence of KE metrics on K-polystable Fano manifolds. We hope that such description could be of some help for readers of the seminal papers \cite{chen-donaldson-sun}.
	
	The first idea in the proof is to consider a "specialization" of the Aubin's continuity path 
	$\mathrm{Ric}({\omega_t})=t\omega_t+(1-t)\omega$, see \eqref{eq:ke-fano-continuity}, with $\omega \in 2\pi c_1(X)$, by replacing the smooth background K\"ahler form $\omega$ with a current of integration $\delta_D$ along a smooth divisor $D$ given as zero of a smooth section of the line bundle $K_X^{-1}$. That is, we consider the following path, called \emph{Donaldson's cone angle continuity path}:
	\begin{equation} \label{DP}
	\mathrm{Ric}({\omega_\beta})=\beta \omega +2 \pi(1-\beta)\delta_D,
	\end{equation}
	for $\beta \in (0,1]$.
	
	Note that the metric $\omega_\beta$ is exactly KE with Einstein constant equal to $\beta$ on $X\setminus D$, but the price to pay is that is not longer smooth near $D$: it has so-called conical singularities.  Roughly, this means that near the divisor $D$, if we choose local coordinates $(z_j)_j$ so that $D$ is given by $z_n=0$, the metric looks like (is uniformly equivalent to) the model flat cone (transverse to $D$) metric: 
	$$ \omega_\beta \sim \frac{\sqrt{-1}}{|z|^{2(1-\beta)}}dz_n\wedge d\bar{z}_n+ \sqrt{-1} \sum_{j=1}^{n-1}d z_j\wedge d\bar{z}_j.$$

	Since it is not always possible to find a \emph{smooth} anti-canonical sections, one need to consider plurianti-canonical sections, that is sections of powers $K_X^{-\lambda}$ of the anti-canonical bundle. Then the generic sections are smooth by Bertini's theorem . The value of the Einstein constant in the corresponding conical continuity path is given by $r(\beta)=1-\lambda(1-\beta)$, and it is positive as soon as $\beta> 1-\frac{1}{\lambda}$.\\
	
	The idea is now simply to prove that the set of cone angles $\beta$s, for which we can solve equation \ref{DP} is \emph{non-empty}, \emph{open} and \emph{closed} under the assumption of $K$-polystability of $X$. Then, letting $\beta \rightarrow 1$, the cone angles can  "open-up" to a genuinely smooth KE metric on X (of course there is plenty of things to be proven to make such statement precise!). As a historical comment, let us point out that a similar strategy to deform cone angle Einstein metrics (with cone singularities along smooth \emph{real} curves) to smooth Einstein metric was proposed by Thurston as a way to attack the $3$D Poincar\`e conjecture \cite{thurston}.
	
	The fact that cone angle metrics (with no assumption on the curvature) always exist is  easy: take $\omega=\omega_{0} + \varepsilon  \sqrt{-1} \partial\overline\partial |s_D|^{2\beta}_{h_0}$ with $\omega_0$ smooth background metric (the curvature of the Chern connection of some Hermitian metric $h_0$ on the plurianti-canonical bundle) and $\varepsilon \ll 1$. Of course they do not satisfy our equation \eqref{DP}. Thus non-emptiness of the set of solutions for small cone angles follows by taking $\lambda>0$ and apply the orbifold version of the negative KE problem for $\beta=\frac{1}{m}< 1-\frac{1}{\lambda}$, see \cite[Section 6.2]{demailly-kollar}, \cite[Part III, Section 5]{chen-donaldson-sun}, using the above conical $\omega$ as background. If $\lambda=1$ one could instead use a result of Berman \cite{berman-Adv} which says that for sufficiently small $\beta$s, a $\log$ version of the alpha invariant criterion ensures existence of conical  KE metrics along $D$ of positive Einstein constant.

	Even if not needed in the proof, we would like to make some  remarks about the {\itshape behaviour} of the conical KE metrics for \emph{small} cone angles.
	
	\begin{remark}
		For  $\lambda>1$ the conical (negative) KE metrics converge as $\beta \rightarrow 0$ to complete cuspidal KE metrics (with hyperbolic cuspidal  behaviour transverse to $D$) as proven in \cite{guenancia}.

		For  $\lambda=1$, an interesting well-known  conjecture states  that the KE cone metrics converge  to the complete Calabi-Yau metrics on $X\setminus D$ constructed by Tian and Yau in \cite{tian-yau-1, tian-yau-2}.
		
		If the anti-canonical bundle admits roots, we can consider also $\lambda< 1$. However, in such case small angle metrics do not exist: it is a ($\log$)-K-unstable regime, as proved in \cite{li-sun}. Heuristically  it is clear: we would expect convergence to complete metrics with strictly positive Ricci tensor, and this contradicts Myers' theorem.
	\end{remark}

	Openness is definitely more sophisticated. The idea is to use some \emph{implicit function theorem}, combined with the fact that there are no holomorphic vector fields tangential to $D$ \cite[Section 4.4]{donaldson-Essays} (which would be related to infinitesimal isometries as we have discussed in section \ref{sec:matsushima}, and hence would give obstructions to the linearization of the problem). However, using naive (weighted) H\"older spaces will not work. The actual function spaces to be consider are given by a variation of usual H\"older spaces where one considers norms of only certain second order derivatives \cite[Section 4.3]{donaldson-Essays} (essentially the one which appear in the complex Hessian). In any case, in \cite[Theorem 2]{donaldson-Essays} it is proved that if we assume existence of  conical KE metrics, we can move the cone angle a bit and still find conical KE metrics. 
	
	\begin{remark} Detailed study of conical KE metrics (generalized energy functionals, alpha invariant, asymptotic analysis, etc\dots) can be found in the works of Jeffres, Mazzeo, and Rubinstein \cite{jeffres-mazzeo-rubinstein}. 
	\end{remark}
	
	From now on, we are taking for granted that we have found conical KE metric for small cone angles, and that the KE condition is open. Thus we need to understand what happens to conical KE spaces $(X,\omega_{\beta_i})$ as $\beta_i\nearrow \underline{\beta}$. The crucial point is now to establish a \emph{weak geometric  compactness result}, which will ensure that, after eventually passing to a subsequence, we can find a limit space $X_\infty$ with sufficiently good property (via partial regularity results). In particular, if $X_\infty$ can be proved to be an \emph{algebraic} space we can try to use the $K$-stability hypothesis to show that bad behaviours cannot happens and that $X_\infty$ is still biholomorphic to $X$ and the limit metric structure is a cone angle metric on $X$ along $D$ with cone angle $\underline{\beta}$. By the continuity argument we could then show that $\beta$ can go to one, and in this case the metric converge to a smooth KE metric.
	
	The right geometric notion of weak limits we need to consider is given by the so-called \emph{Gromov-Hausdorff topology}. This is a very weak topology on the class of isomorphism class of compact metric spaces, which has the advantage to make precise what one means for convergence of a smooth Riemannian manifold to a (potentially) singular space. Thus, in the next section we will recall some of the basic definitions and properties of such notion.
	
	The next sections can be understood  as a geometric way to do {\itshape a priori} estimates and study regularity for our KE equations.
	
	\subsection{Gromov-Hausdorff basics ("weak limits")}\label{sec:GH}
	Let $(X,d_X)$ and $(Y,d_Y)$ be two compact metric spaces. Define the following \emph{Gromov-Hausdorff} (GH) (pseudo-)distance:
	$$d_{GH}(X,Y)= \inf_{Z\hookleftarrow_{\text{isom}}X,Y} \inf\{\varepsilon \geq 0 \,|\, \mathcal{N}_\varepsilon^Z(X)\supseteq Y, \, \mathcal{N}_\varepsilon^Z(Y)\supseteq X\},$$
	where $\mathcal{N}_\varepsilon^Z(-)$ denotes the $\varepsilon$-neighborhood of a set in $Z$.
	It can be easily shown that $d_{GH}(X,Y)=0$ implies that $X$ and $Y$ are isometric. Thus, the space of compact metric spaces modulo isometries is itself a metric space. Such "universal" space can be used to study degeneration of Riemannian manifolds to singular spaces. A usual way to bound the GH distance to study convergence in such "weak" GH topology is by finding functions $f:X \rightarrow Y$ which are $\varepsilon$-dense and $\varepsilon$-isometries. 
	
	A crucial question is to find criteria of convergence in such GH topology (\emph{pre-compactness}), {\itshape i.e.} given a sequence $(X_i)$ of compact metric spaces, find conditions which guarantee that such sequence subconverges to a compact metric space $X_\infty$.
	
	The fundamental result is the Gromov's pre-compactness theorem. A set $\mathcal{X}$ of compact metric spaces is \emph{uniformally totally bounded} (UTB) if:
	\begin{itemize}
		\item $\exists D>0$ such that $\mathrm{diam}(X)<D$ for all $X \in \mathcal{X}$;
		\item $\forall \varepsilon >0$, $\exists N=N(\varepsilon)$ such that for all $X \in \mathcal{X}$, $\exists S_X$ $\varepsilon$-dense set of cardinality at most $N$.
	\end{itemize}
	\begin{theorem}[Gromov's pre-compactness]
		If $\mathcal{X}$ is UTB, then $\mathcal{X}$ is pre-compact.
	\end{theorem}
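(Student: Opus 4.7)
The plan is to extract, from any sequence $(X_i)_{i\in\mathbb{N}} \subseteq \mathcal{X}$, a subsequence that GH-converges to a compact metric space $(X_\infty, d_\infty)$, via a diagonal argument on $\varepsilon$-nets. First I would use the UTB hypothesis to choose, for each $i$ and each $k\in\mathbb{N}$, a $(1/k)$-dense subset $S_{i,k}\subseteq X_i$ with $|S_{i,k}|\le N(1/k)$. By replacing $S_{i,k+1}$ with $S_{i,k}\cup S_{i,k+1}$, one may assume the nesting $S_{i,k}\subseteq S_{i,k+1}$, at the price of enlarging cardinalities, still bounded by $\sum_{\ell\le k} N(1/\ell)$. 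After a first pass to a subsequence (done for each $k$ and then diagonalized), one may also assume that $|S_{i,k}|=n_k$ is independent of $i$, and label $S_{i,k}=\{x_{i,\alpha}^{k}:1\le\alpha\le n_k\}$ so that the labelling respects nesting.

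Second, all pairwise distances $d_{X_i}(x_{i,\alpha}^{k},x_{i,\beta}^{k})$ lie in the compact interval $[0,D]$, where $D$ is the uniform diameter bound. Running a further diagonal extraction over the countable set of triples $(k,\alpha,\beta)$, pass to a subsequence along which each such distance converges to a number $\delta_{\alpha,\beta}^{k}\in[0,D]$. The limits $\delta$ inherit the triangle inequality and the compatibility between scales. On the countable set $S_\infty:=\bigsqcup_k\{1,\ldots,n_k\}/{\sim}$, where $\sim$ identifies a label at scale $k$ with its image at scale $k+1$ under the nested labelling, the numbers $\delta$ define a pseudo-metric; quotienting by the zero-distance relation and taking the metric completion yields a metric space $(X_\infty,d_\infty)$. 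Since $X_\infty$ is complete and admits a finite $(1/k)$-net for every $k$, it is compact.

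Third, I would verify that the chosen subsequence $X_i$ converges to $X_\infty$ in GH distance. For each fixed $k$ and $i$ large, the bijection $\varphi_{i,k}: S_{i,k}\to [S_\infty]_k$ sending $x_{i,\alpha}^{k}\mapsto [\alpha,k]$ distorts distances by at most $\eta_{i,k}:=\max_{\alpha,\beta}|d_{X_i}(x_{i,\alpha}^{k},x_{i,\beta}^{k})-\delta_{\alpha,\beta}^{k}|$, which tends to $0$ as $i\to\infty$. Extending $\varphi_{i,k}$ to any $x\in X_i$ by sending $x$ to the image of a nearest point in $S_{i,k}$ produces a map whose image is $(1/k)$-dense in $X_\infty$ and whose distortion is at most $2/k+\eta_{i,k}$; the image of the $(1/k)$-dense set $[S_\infty]_k$ in $X_\infty$ gives the reverse approximation. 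This shows $d_{GH}(X_i,X_\infty)\le 2/k+\eta_{i,k}$, and letting $i\to\infty$ and then $k\to\infty$ gives $d_{GH}(X_i,X_\infty)\to 0$.

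The main technical obstacle is the bookkeeping in the first two steps: arranging nested nets coherently across scales \emph{and} across indices $i$, while keeping cardinalities bounded and uniform in $i$, requires care so that the diagonal extraction produces a single subsequence that works simultaneously at every scale and for every pair of labels. Once the limit data $\delta_{\alpha,\beta}^{k}$ have been extracted on a common subsequence, the construction of $X_\infty$ and the verification of GH convergence reduce to the standard fact that a complete totally bounded metric space is compact, together with the elementary estimate relating $d_{GH}$ to distortion of $\varepsilon$-nets.
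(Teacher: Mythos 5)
Your argument is correct and follows essentially the same route as the paper's proof: uniformly discretize each space by finite $\varepsilon$-nets of uniformly bounded cardinality, diagonalize to make the pairwise distances converge, equip the resulting countable label set with the limiting pseudo-metric, and take the completion (modulo zero-distance identification) to obtain the compact GH limit. Your extra care with nested labellings and the explicit distortion estimate $d_{GH}(X_i,X_\infty)\le 2/k+\eta_{i,k}$ merely fills in details the paper leaves implicit.
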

	The proof is simple (for more details and for more references on GH topology see \cite{burago-burago-ivanov}). The idea is to uniformly discretize the spaces in the family, apply a diagonalization argument, and take a metric completion to get a compact limit space. More precisely, let $\mathcal{X} \in X_n \supseteq S_n:= S_{n,1} \cup S_{n,\frac{1}{2}}\cup \dots = \{x_{i,n}\}_{i=1}^{\infty}$ where  $S_{n,\frac{1}{j}}$ is a finite  $\frac{1}{j}$-dense set with at most  $N(\frac{1}{j})$ points independent of $n$. Since $d_{ij}^n:=d_{X_n}(x_{i,n}, x_{j,n}) < D$, by diagonalization we can find a subsequence $n_k$ such that $d_{ij}^{n_k} \rightarrow d_{ij}$. Take $S$ a countable set and equip it with the pseudo-distance $d_{ij}$. Then its metric completion after identification between points of zero distance $X_{\infty}=\overline{S/d}$ is a compact metric space which is the GH limit of the sequence $X_{n_k}$.
	
	Thus, given families of  Riemannian manifolds, it becomes interesting to find conditions which will implies that the family is UTB in the GH distance. The idea is that if we have a uniform control of the volume of small balls, we may hope to control uniformly  the number of them we need to cover a manifolds. It is in such consideration that the Ricci curvature enters the game! Recall that in normal coordinates centered at $p$, the Ricci tensor exactly measure the second order deviation of the volume form from the flat Euclidean measure:
	$$dV_g=\left(1-\frac{1}{6}\sum_{i,j} \mathrm{Ric}_{ij}(p)x_ix_j+O(x^3) \right)d\underline{x}.$$
	A global result on the behaviour of volume under Ricci lower bounds is given by the following crucial result. Let $H_c$ be the  $n$-dimensional  simply-connected space of constant sectional curvature equal to $c$. Then:
	\begin{theorem}[Bishop-Gromov monotonicity] If $\mathrm{Ric}(g)\geq(n-1)cg$, for $c \in \R$, then the function $\frac{\mathrm{Vol}(B_g(p,r))}{\mathrm{Vol}(B_{H_c}(r))}$ is non-increasing. 
	\end{theorem}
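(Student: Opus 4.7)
The plan is to work in geodesic polar coordinates centered at $p$ and compare the volume element of $g$ along radial geodesics with that of the model space $H_c$, using the Jacobi equation together with the Ricci lower bound. Write $dV_g = A(s,\theta)\,ds\,d\sigma(\theta)$ on the star-shaped interior of the cut locus, where $s$ is the radial parameter and $\theta\in S^{n-1}$, and let $A_c(s)$ be the analogous radial density on $H_c$, which depends only on $s$. I aim to prove the pointwise monotonicity $\partial_s(A(s,\theta)/A_c(s))\leq 0$ and then convert this into the desired monotonicity of the volume quotient by a pair of integrations.

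The core local step is a scalar Riccati comparison for the quantity $m(s,\theta):=\partial_s\log A(s,\theta)$, which equals $\mathrm{tr}\,S$ for $S$ the shape operator of the geodesic sphere of radius $s$ along the unit-speed geodesic $\gamma_\theta$. Writing $S = J'J^{-1}$ for a Jacobi tensor $J$ and differentiating gives the matrix Riccati identity $S' + S^2 + R_{\dot\gamma}=0$ with $R_{\dot\gamma}(\cdot)=R(\cdot,\dot\gamma)\dot\gamma$. Taking the trace and using Cauchy-Schwarz in the form $\mathrm{tr}(S^2)\geq(\mathrm{tr}\,S)^2/(n-1)$ yields
\[
m'+\frac{m^2}{n-1}+\mathrm{Ric}(\dot\gamma,\dot\gamma)\leq 0.
\]
The Ricci hypothesis forces $m'+m^2/(n-1)\leq -(n-1)c$, while $m_c(s):=\partial_s\log A_c(s)$ satisfies the same relation with equality in the model. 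Both $m$ and $m_c$ share the initial asymptotics $(n-1)/s$ as $s\downarrow 0$, so writing $u:=m-m_c$ and rearranging one obtains $u'+\frac{m+m_c}{n-1}u\leq 0$; a Gronwall-type ODE comparison, with $u(s)\to 0$ as $s\to 0^+$, then gives $m(s,\theta)\leq m_c(s)$ throughout the domain.

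The conclusion follows by integration. Integrating $\partial_s\log(A/A_c)=m-m_c\leq 0$ shows that for each fixed $\theta$ the ratio $A(s,\theta)/A_c(s)$ is non-increasing in $s$ up to the cut distance $c(\theta)$; extending $A(\cdot,\theta)$ by zero past $c(\theta)$ preserves this monotonicity almost everywhere, which is the standard device for handling the cut locus (using that $c\colon S^{n-1}\to(0,\infty]$ is upper semicontinuous, so the singular set is negligible). Averaging in $\theta$ preserves monotonicity, so the area ratio $\mathrm{Area}(\partial B_g(p,s))/\mathrm{Area}(\partial B_{H_c}(s))$ is non-increasing in $s$; the elementary calculus lemma that if $f/g$ is non-increasing with $g>0$ then $\int_0^r f/\int_0^r g$ is non-increasing (immediate from computing the derivative of the quotient and pairing $s\leq r$ terms) converts this into the Bishop-Gromov monotonicity of $\mathrm{Vol}(B_g(p,r))/\mathrm{Vol}(B_{H_c}(r))$. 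The analytic heart of the proof is the short Riccati--Cauchy-Schwarz inequality above; the main nuisance, rather than a true obstacle, is the cut-locus bookkeeping needed to globalize the pointwise comparison.
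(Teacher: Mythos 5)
Your proof is correct: it is the standard Bishop--Gromov argument via the scalar Riccati inequality for the mean curvature of geodesic spheres in polar coordinates, ODE comparison with the model density, extension by zero past the cut locus, and the elementary quotient-of-integrals lemma. The paper itself does not prove this theorem --- it only refers to \cite{burago-burago-ivanov} --- and your argument is essentially the textbook proof that the cited reference supplies, so there is nothing in the paper to compare against and no gap to report.
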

	See \cite{burago-burago-ivanov} for its proof. Its immediate consequence is:
	\begin{theorem}[Riemannian Gromov's precompactness] The set of $n$-dimensional Riemannian manifolds with $\mathrm{diam}(M,g)\leq D$ and $\mathrm{Ric}(g)\geq c(n-1)g$ is GH pre-compact.
	\end{theorem}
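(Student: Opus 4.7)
The plan is to reduce the statement to the abstract Gromov pre-compactness criterion (UTB) by using Bishop-Gromov monotonicity to produce a uniform covering count. The diameter hypothesis immediately gives the first UTB condition $\mathrm{diam}(M,g)\leq D$, so the only real content is to exhibit, for each $\varepsilon>0$, an $\varepsilon$-dense subset of $M$ of cardinality at most a constant $N(\varepsilon)$ that depends only on $n$, $c$, $D$ (and not on the particular $(M,g)$).

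To produce such a set I would take a \emph{maximal $\varepsilon$-separated subset} $\{p_1,\dots,p_N\}\subset M$, whose existence (for $N$ at worst countable) follows from Zorn's lemma or a greedy construction. Maximality forces $\bigcup_i B_g(p_i,\varepsilon)=M$, so the set is $\varepsilon$-dense; on the other hand the balls $B_g(p_i,\varepsilon/2)$ are pairwise disjoint by the separation property. The key volume estimate comes from Bishop-Gromov: since $\mathrm{Ric}(g)\geq (n-1)cg$ and $M\subset B_g(p_i,D)$, the monotonicity of $r\mapsto \mathrm{Vol}(B_g(p_i,r))/\mathrm{Vol}(B_{H_c}(r))$ applied to $\varepsilon/2\leq D$ yields
\begin{equation*}
\frac{\mathrm{Vol}(B_g(p_i,\varepsilon/2))}{\mathrm{Vol}(B_{H_c}(\varepsilon/2))}\;\geq\;\frac{\mathrm{Vol}(B_g(p_i,D))}{\mathrm{Vol}(B_{H_c}(D))}\;=\;\frac{\mathrm{Vol}(M,g)}{\mathrm{Vol}(B_{H_c}(D))}.
\end{equation*}
Summing over the disjoint balls $B_g(p_i,\varepsilon/2)\subset M$ gives
\begin{equation*}
\mathrm{Vol}(M,g)\;\geq\;\sum_{i=1}^{N}\mathrm{Vol}(B_g(p_i,\varepsilon/2))\;\geq\;N\,\frac{\mathrm{Vol}(B_{H_c}(\varepsilon/2))}{\mathrm{Vol}(B_{H_c}(D))}\,\mathrm{Vol}(M,g),
\end{equation*}
so that $N\leq \mathrm{Vol}(B_{H_c}(D))/\mathrm{Vol}(B_{H_c}(\varepsilon/2))=:N(\varepsilon)$, a bound independent of $(M,g)$. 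In particular the maximal separated set is finite and provides the desired uniform $\varepsilon$-net.

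Having established that the family is UTB, Gromov's abstract pre-compactness theorem (already recalled in the excerpt) immediately yields GH pre-compactness, completing the proof. The only delicate point is the application of Bishop-Gromov in the form that compares to the model $H_c$ even when $c<0$ (where $\mathrm{Vol}(B_{H_c}(r))$ grows exponentially, but remains finite for finite $r$); this is precisely the case covered by the theorem as stated. I do not anticipate any real obstacle here: the argument is essentially a volume-packing count, and the main conceptual input—that a lower Ricci bound forces a uniform upper bound on ball-covering numbers—is exactly Bishop-Gromov.
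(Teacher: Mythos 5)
Your proof is correct and follows essentially the same route as the paper: a maximal $\varepsilon$-separated set whose half-radius balls are disjoint, combined with Bishop--Gromov monotonicity (using $\mathrm{diam}\leq D$) to bound its cardinality by $\mathrm{Vol}(B_{H_c}(D))/\mathrm{Vol}(B_{H_c}(\varepsilon/2))$, which verifies UTB and hence Gromov pre-compactness. The only difference is that you spell out the packing inequality in more detail than the paper does.
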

	\begin{proof}
		It is sufficient to check the UTB property. Take $\{x_i\}$ a maximally $\varepsilon$-separated set, {\itshape i.e.} $\{x_i\}$ is $\varepsilon$-dense and $B(x_i,\frac{\varepsilon}{2})$ are disjoint. Then, by monotonicity:
		$$|\{x_i\}|\leq \frac{\mathrm{Vol}(M,g)}{\mathrm{Vol}(B_g(x_i, \frac{\varepsilon}{2}))} \leq  \frac{\mathrm{Vol}(B_{H_c}(D))}{\mathrm{Vol}(B_{H_c}( \frac{\varepsilon}{2}))}.$$
		Since the last term in the above expression depends only on $c$, $D$, and $\varepsilon$, we are done.
	\end{proof}
	
	If in the above theorem $c>0$, then the diameter is automatically bounded thanks to Myers' theorem. In particular, let us note that the set of $n$-dimensional KE Fano manifolds is GH pre-compact, {\itshape i.e.} given any sequence of them we can extract a subsequence converging  to a compact metric space. 
	
	\begin{remark}
		If the diameter bound does not hold, one still have pre-compactness in the \emph{pointed GH sense}:  by choosing a sequence of points $p_i \in M_i$, for all $r>0$, closed $r$-balls centered at $p_i$ sub-GH-converge to closed $r$-balls  and center $p_\infty$ in a metric space $X_\infty$ (in general non-compact).  In particular, this notion of convergence is important  in the analysis of \emph{metric bubbles} and \emph{tangent cones} ---both arising from "unbounded" rescalings (zoomings)--- to study singularities of limits of \emph{compact} Einstein spaces.
	\end{remark}
	
	In our problem, we need to consider limits of \emph{conical} KE metrics. It is still possible to show that one can take limit in the GH topology also in this case (for example, but it is not trivial, it is possible to show that the cone angle can be "smoothed out" to genuine Riemannian metrics with lower bounds on the Ricci curvature and with control on their diameter \cite[part I]{chen-donaldson-sun}). Thus we can assume that our sequence in the Donaldson's continuity path $(X, \omega_{\beta_i})$ subconverge in GH to a compact metric space $X_{\infty}$. This is the "weak limit". Now we need to do some "regularity"!

	\subsection{Cheeger-Colding Theory  ("smooth regularity")}
	In general, Cheeger-Colding(-Tian, for the K\"ahler case) theory, studies regularity of the GH limits $X_\infty$ of sequences of Riemannian manifolds with Ricci bounded below. See survey \cite{cheeger-Fermi}. In our situation we can (essentially) restrict to the Einstein case with volume \emph{non-collapsing} hypothesis $\mathrm{Vol}(B(p,1))>C>0$ (observe that if Ricci is positive, this is a simple consequence of Bishop-Gromov monotonicity). We focus on the absolute case, {\itshape i.e.} no conical manifolds. This case is interesting in itself ({\itshape e.g.}  moduli compactifications in the KE Fano case), and still central in the more technical case of conical KE.
	
	From a historical perspective, such theory generalizes results of Andersen, Bando, Kazue, Nakajima, and Tian of the ends of the eighties for the real four dimensional case, based on Ulhenbeck's $\varepsilon$-regularity techniques. In particular, under the above assumptions, they were able to show \emph{orbifolds compactness}: {\itshape i.e.} $X_\infty$ is a smooth Einstein space away from finitely many points which can be locally modeled on $\R^4/\Gamma_p$ (where $\Gamma_p \subset SO(4)$ is a finite subgroup acting freely on the $3$-sphere, so that the metric pull-back to a smooth tensor (\emph{orbifold smooth metric}).
	
	In Cheeger-Colding theory "almost rigidity" theorems are used to define  \emph{stratification} of the limit space $X_\infty$. A prototypical example of rigidity theorem in smooth Riemannian geometry is given by the Cheeger-Gromoll splitting theorem: a complete Riemannian manifold $(M,g)$ with non-negative Ricci containing a \emph{line} ({\itshape i.e.} an infinity length geodesic $\gamma:(-\infty,\infty) \rightarrow M$, whose subsegments are always minimizing) must be isometric to the split product $M\simeq N\times \R$. The idea of the proof consists in considering \emph{Busemann functions} $b_{\gamma^{\pm}}(p):=\lim_{t\rightarrow \pm \infty} d(p, \gamma^{\pm}(t))-t$. Using Laplacian comparison for the distance function under the Ricci curvature bound hypothesis, one can show that $b_{\gamma^{\pm}}$ are sub-harmonic. Since $b_{\gamma^{+}}+b_{\gamma^{-}} \geq 0$ taking value zero exactly on the image of $\gamma$, by the maximum principle the function $b_{\gamma^{+}}=-b_{\gamma^{-}}$  is harmonic. Finally, by Bochner's formula the unit length vector field $\nabla b_{\gamma^{+}}$ must be parallel, and thus its flow gives the desired splitting.
	
	By performing an analysis of "approximate Busemann functions" via integral estimates, Cheeger-Colding theory  provides the following "quantitative" version of the above theorem, known as \emph{almost splitting theorem}. Let
	\begin{itemize}
		\item $\mathrm{Ric}(g)\geq -(n-1)\delta g $ (think $\delta \ll 1$);
		\item $p,q^+,q^- \in M$, with $d(p,q^\pm)\geq L \gg 1$ and $d(p,q^+)+d(p,q^-)-d(q^+,q^-)\leq \varepsilon \ll 1$.
	\end{itemize}
	Then $\exists$ a  ball $B_R \subseteq \R \times X$, with $X$ length space, such that $d_{GH}(B(p,R),B_R) \leq \phi(\delta,L^{-1},\varepsilon |  R)$, {\itshape i.e.} for fixed $R$,   $\phi \rightarrow 0$, as $\delta,  L^{-1}, \varepsilon,   \rightarrow 0$.
	
	As a corollary one sees that, by rescaling, pointed GH limits of sequences $(M_i,g_i,p_i)$ with Ricci bounded below must split if they contain a line, since  $\mathrm{Ric}({\lambda_i g_i}) \rightarrow0$ for $\lambda_i\rightarrow \infty$. 
	
	More generally, this result, combined with "volume cones are metric cones", is the basis of the following structural picture for GH limits $(M_i,g_i)\rightarrow X_\infty$ of manifolds with Ricci bounded below, bounded diameter and volume non-collapsing. 
	
	Take $p \in X_\infty$ and define a (non-necessarily unique!) \emph{metric tangent cone} at $p$ to be $C_p(X_\infty):=\lim_{pGH} (X_\infty, p, \lambda_i d_\infty)$ as $\lambda_i\rightarrow \infty$. Then $C_p(X_\infty)= \R^k\times C(Y)$ with $C(Y)$ a \emph{metric cone} ({\itshape i.e.} $d((r_1,y_1),(r_2,y_2))=\sqrt{r_1^2+r_2^2-2r_1r_2 \cos(d_Y(y_1,y_2))} $ of diameter less than $\pi$ and $n-k$ Hausdorff dimension. Note that the \emph{link} $Y$ may  be singular. Define the \emph{regular set} $$\mathcal{R}:=\{p\in X_\infty \,| \, \exists C_p(X_\infty)\cong_{\text{isom}} \R^n\},$$  which do not need to be open, and the \emph{singular strata} for $k \in \mathbb{N}$, $$\mathcal{S}:= X_\infty \setminus \mathcal{R}  \supseteq \mathcal{S}_k:=\{p \in X_\infty \,| \, \mbox{no tangent cones at $p$ splits } \R^{k+1}\}.$$
	
	Summarizing some of the main results of Cheeger-Colding theory:
	\begin{theorem} Let $\mathcal{S}_0\subseteq \dots \subseteq  \mathcal{S}_{n-2}=\mathcal{S}\subseteq X_\infty$ with Hausdorff dimension  $\mbox{dim}_{\mathcal{H}} \mathcal{S}_k\leq k$. If Ricci has  two side bounds then $\mathcal{R}$ is open (actually a $\mathcal{C}^{1,\alpha}$ manifold,   $\mathcal{C}^{\infty}$ if the metrics are Einstein). If $g_i$ are KE then $S=S_{n-4}$ (Cheeger-Colding-Tian).
		
	\end{theorem}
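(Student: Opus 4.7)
The statement bundles several deep theorems, so I would organize the proof around four main components and attack them in increasing order of difficulty. The central technical tool throughout is Cheeger--Colding's \emph{almost splitting theorem}, already introduced in the excerpt, together with its companion ``volume cones are metric cones'' (which itself follows from Bishop--Gromov rigidity plus an integral estimate on Hessians of distance functions).

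First, to obtain the stratification $\mathcal{S}_0\subseteq\cdots\subseteq\mathcal{S}_{n-2}=\mathcal{S}$ with $\dim_{\mathcal{H}}\mathcal{S}_k\leq k$, I would follow Cheeger--Colding's dimension-reduction argument: iterating the cone structure of tangent cones shows that a tangent cone to a tangent cone at $p\in\mathcal{S}_k$ always splits an additional $\R$-factor, so one may argue by induction on $k$ that $\mathcal{S}_k$ can be covered by countably many sets on which a Federer-type dimension bound holds. The key estimate is that at scale $r\ll1$ around any $p\in\mathcal{S}_k$, the set of points whose balls $B(q,r)$ are $\varepsilon$-Gromov--Hausdorff close to a ball in $\R^{k+1}\times C(Y)$ has controlled measure, and this is obtained by a covering/doubling argument based on volume non-collapsing. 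The Cheeger--Naber quantitative stratification would give an even sharper statement, but the bare dimension bound suffices here.

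Second, for the improvement $\mathcal{S}=\mathcal{S}_{n-2}$ in the volume non-collapsed Ricci-bounded case, I would argue that no tangent cone can be isometric to $\R^{n-1}\times C(Y)$ with $Y$ proper: such a cone is one-dimensional transverse to $\R^{n-1}$ and, by non-collapsing plus Bishop--Gromov applied to the link, the cross section $Y$ must have length exactly $2\pi$, making the cone isometric to $\R^n$. Hence $\mathcal{S}_{n-1}\setminus\mathcal{S}_{n-2}=\varnothing$. Third, for openness and $\mathcal{C}^{1,\alpha}$ regularity of $\mathcal{R}$ under two-sided Ricci bounds, the idea is Cheeger--Colding's $\varepsilon$-regularity: Gromov--Hausdorff closeness to a Euclidean ball plus non-collapsing forces Reifenberg flatness, which upgrades via Anderson's harmonic coordinate construction (solving $\Delta_g x^i=0$ with prescribed boundary data, using the two-sided Ricci bound for $\mathcal{C}^{1,\alpha}$ control of the metric components in such coordinates) to a smooth manifold structure. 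When the metrics are Einstein, the Ricci PDE $R_{ij}=\lambda g_{ij}$ becomes, in harmonic coordinates, a quasilinear elliptic system for the $g_{ij}$ with $g_{ij}$ itself as the leading term; standard Schauder bootstrap then yields $\mathcal{C}^{\infty}$ (and in fact real-analytic) regularity.

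The hardest part, and the one I would leave for last, is the K\"ahler improvement $\mathcal{S}=\mathcal{S}_{n-4}$. The plan is a parity argument coupled with $\varepsilon$-regularity. One shows first that any tangent cone at a point of a non-collapsed K\"ahler--Einstein limit inherits a parallel complex structure $J$ on its regular part (this uses that $J$ is parallel and uniformly bounded along the sequence, together with harmonic coordinate convergence on the regular part). Now if a tangent cone splits an $\R^{2n-3}$ factor, $J$ must preserve the splitting in a generalized sense; the Euclidean factor would then have real dimension $2n-3$, which is odd and cannot be $J$-invariant, forcing either an extra split (putting the point in a lower stratum) or a codimension $4$ singular set directly by dimension-counting. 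The main obstacle is making the notion of ``$J$ on the limit'' rigorous at singular points, which requires the Cheeger--Colding--Tian refinement that on non-collapsed K\"ahler--Einstein limits the almost-K\"ahler structures converge on the regular set in a way strong enough that the parity obstruction persists in the limit. Once this is in place, combining it with the codimension-$2$ stratification of the previous step removes the strata of codimension $2$ and $3$, yielding $\mathcal{S}=\mathcal{S}_{n-4}$.
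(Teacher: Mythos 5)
The paper does not actually prove this statement: it is quoted as a summary of Cheeger--Colding(--Tian) theory with a pointer to the literature, so your proposal has to be measured against the standard proofs the paper cites. Two of your four blocks are fine as sketches of those proofs: the Hausdorff-dimension bound $\dim_{\mathcal H}\mathcal{S}_k\leq k$ via Federer-type dimension reduction on iterated tangent cones, and the openness/$\mathcal{C}^{1,\alpha}$ (resp.\ $\mathcal{C}^\infty$ in the Einstein case) regularity of $\mathcal{R}$ via $\varepsilon$-regularity, Anderson's harmonic coordinates under two-sided Ricci bounds, and elliptic bootstrap for the Einstein system. The genuine problems are in the other two blocks.

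First, your exclusion of the codimension-one stratum is wrong as stated. If the tangent cone is $\R^{n-1}\times C(Y)$ then $C(Y)$ is one-dimensional and its link $Y$ is \emph{zero}-dimensional (one or two points), so ``Bishop--Gromov applied to the link forces the cross-section to have length exactly $2\pi$'' does not parse; the two-point case is forced to give $\R$ (hence a regular point) by the length-space property alone, and the real content is ruling out the half-space $\R^{n-1}\times\R_{\geq 0}$, which needs Cheeger--Colding's separate (volume-density/topological) argument, not a link-length computation. Worse, the mechanism you invoke is exactly the one that fails in codimension two: under only a lower Ricci bound with non-collapsing, two-dimensional cone factors of angle less than $2\pi$ genuinely occur as limits, which is precisely why the general conclusion is only $\mathcal{S}=\mathcal{S}_{n-2}$. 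Second, in the K\"ahler--Einstein part the parity argument (parallel vector fields form a $J$-invariant, hence even-dimensional, space) only removes the odd-codimension strata, e.g.\ codimension three; it says nothing about tangent cones $\R^{n-2}\times C(Y)$ with $Y$ a circle of circumference $2\pi\beta<2\pi$, since such a flat cone factor is itself K\"ahler and perfectly $J$-invariant. Your final step ``combining it with the codimension-2 stratification of the previous step removes the strata of codimension 2 and 3'' is exactly where the gap sits: the previous step only gives codimension $\geq 2$, it does not remove codimension-two strata. Excluding them is the heart of Cheeger--Colding--Tian and requires a separate $\varepsilon$-regularity/slicing argument specific to the K\"ahler--Einstein situation (integrating the Ricci form over two-dimensional slices transverse to the $\R^{n-2}$-factor, equivalently exploiting the quantized holonomy of the (anti)canonical bundle, to force $\beta=1$). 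Without that input the codimension-four conclusion $\mathcal{S}=\mathcal{S}_{n-4}$ does not follow from what you have written.
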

	
	Let us go back to discuss in more details the K\"ahler  situation.
	
	\subsection{Donaldson-Sun Theory  ("algebraic  regularity")}
	Let $X_\infty$ be a limit of smooth complex $n$-dimensional KE Fano manifolds $(M_i,g_i)$. The conical KE case is more technical, but part of the ideas can be already seen in this "absolute" situation. By  Cheeger-Colding regularity we know that $\mbox{dim}_\mathcal{H} X_\infty=2n$, its singular set agrees with $\mathcal{S}_{2n-4}$ stratum, the regular set is open and on it we can find a (incomplete) smooth KE metric $g_\infty$. Moreover, for all compact subset $K\subset\subset\mathcal{R}$ there exists diffeomorphisms onto their images $\psi_i:K\rightarrow M_i$ such that $\psi_i^*g_i \rightarrow g_\infty$ in $\mathcal{C}^{1,\alpha}$, and similarly for the complex structures $J_i$. 
	
	Not too surprisingly, the metric cones are now complex cones: $C_p(X_\infty)\cong_{isom}\C^k\times C(Y)$, with $C(Y)$ a Calabi-Yau cone, {\itshape i.e.} $\omega_{C(Y)}=\frac{\sqrt{-1}}{2} \partial \bar \partial r_0^2$ where $r_0$ is the distance from the apex of the cone, and $Ric(\omega_{CY})=0$. The link $Y$ is a potentially singular (Sasaki)-Einstein space with positive scalar curvature.
	
	The main results of Donaldson and Sun \cite{donaldson-sun-1} says the following:
	
	\begin{theorem}\label{AGL}
		The GH limit $X_\infty$ is naturally   \emph{homeomorphic} to $W\subseteq \C\mathbb{P}^n$ a \emph{singular Fano variety} with $\mathcal{S}(X_\infty)=\mathrm{Sing}(W)$.
	\end{theorem}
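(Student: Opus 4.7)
The plan is to realize $X_\infty$ inside projective space by constructing holomorphic sections of a large power $K^{-k}$ via Hörmander's $L^2$-technique and packaging them into a Kodaira-type map whose image is algebraic by Chow's theorem. The key quantitative input is a uniform \emph{partial $C^0$-estimate}: for some $k$ independent of $i$, the Bergman kernel
$$\rho_k(M_i,g_i) \;=\; \sum_j \bigl|s_j^{(i,k)}\bigr|^2_{h^k_i} \;\geq\; c \;>\; 0,$$
where $(s_j^{(i,k)})_j$ is an $L^2$-orthonormal basis of $H^0(M_i;K_{M_i}^{-k})$ and $h_i$ is the Hermitian metric whose curvature is the KE form $\omega_i$. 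Granted this, the maps $\Phi_k^i\colon M_i\hookrightarrow \mathbb{CP}^{N_k}$ have uniform derivative bounds (via Tian--Yau--Zelditch expansions on the regular part), the images $W_i\subseteq\mathbb{CP}^{N_k}$ have uniformly bounded degree and hence subconverge in the Hilbert scheme to a subvariety $W$, and the pointwise limit $\Phi_k^\infty\colon X_\infty\to W$ is defined.

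To establish the partial $C^0$-estimate one works tangent cone by tangent cone. At each $p\in X_\infty$ fix $C_p(X_\infty)=\C^\ell\times C(Y)$, a Calabi--Yau cone with Sasaki--Einstein link $Y$. On $C(Y)$ there is a canonical holomorphic ``peak'' section of the trivial line bundle, given by $\exp(-r^2/2)$ (or rather its normalised version built from the homogeneous holomorphic function associated to the Reeb vector field), concentrated near the vertex. Transplant this model, cut off on an annulus of controlled geometry, via a Cheeger--Colding nearly-holomorphic chart on $(M_i,k\cdot g_i)$ around a point $p_i\to p$ to obtain an approximately holomorphic section $\sigma_i$ of $K_{M_i}^{-k}$. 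Correct $\sigma_i$ to a genuine holomorphic section by solving $\overline{\partial}u_i = \overline{\partial}\sigma_i$: the positive Ricci lower bound supplies the positivity of $K_{M_i}^{-k}$ needed for Hörmander's $L^2$ estimate, and uniform Sobolev inequalities (available since $\mathrm{Ric}\geq g_i$ plus non-collapsing via Bishop--Gromov) together with Moser iteration upgrade the resulting $L^2$ bound on $u_i$ to a $C^0$ bound strictly smaller than the transplanted peak. Hence $\sigma_i-u_i$ is non-zero near $p_i$, uniformly in $i$.

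With the partial $C^0$-estimate in hand, the geometric Bergman metric $\tfrac{1}{k}(\Phi^i_k)^\ast \omega_{\mathrm{FS}}$ is $C^0$-close to $g_i$ on $(M_i,g_i)$, so $\Phi_k^\infty$ is a homeomorphism onto its image $W$. Injectivity is proved by a two-point version of the peak-section construction above, separating distinct points of $X_\infty$ by a pair of sections. On the metric regular set $\mathcal{R}$, the KE limit metric is smooth, $\Phi_k^\infty$ is a biholomorphism onto a Zariski open subset of $W$, and $g_\infty$ extends as a weak KE current across $\mathcal{S}$, which has codimension $\geq 4$; this forces $W$ to be a $\mathbb{Q}$-Fano variety with klt singularities and identifies $\mathrm{Sing}(W)$ with $\mathcal{S}(X_\infty)$, since any smooth point of $W$ has Euclidean tangent space while tangent cones of $X_\infty$ at singular points are non-flat metric cones.

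The main obstacle is the partial $C^0$-estimate, and inside it the production of a good peak section at every $p\in X_\infty$ on a definite scale with constants independent of $i$. This requires (i) sufficient structural knowledge of the tangent cones $\C^\ell\times C(Y)$ to write down a model holomorphic section with controlled weighted norm and concentration, relying on the complex-analytic structure of Sasaki--Einstein links; (ii) a quantitative rate at which rescaled balls in $(M_i,g_i)$ approach their limit tangent cone in the pointed Gromov--Hausdorff sense, together with \emph{holomorphic} (not merely metric) approximation allowing one to push holomorphic model data back to $M_i$; (iii) uniform functional-analytic control on $\overline{\partial}$-inversion. Each of these pieces is delicate, and their combination is what makes the argument genuinely new relative to the classical orbifold case in real dimension four.
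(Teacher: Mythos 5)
Your proposal follows essentially the same route as the paper's sketch of the Donaldson--Sun argument: the uniform partial $C^0$-estimate obtained by transplanting a model peak section from the metric tangent cone $\C^\ell\times C(Y)$ via Cheeger--Colding charts, correcting it with a H\"ormander-type $\bar\partial$-argument (whose uniform invertibility comes from the Bochner/Ricci term) and Moser iteration, and then identifying the GH limit with the algebraic (flat/Hilbert-scheme) limit of the Tian embeddings, raising $k$ in a bounded way to separate points. The only cosmetic difference is that the model section on the cone is the constant trivializing section with Hermitian weight $e^{-r^2/2}$ rather than the function $\exp(-r^2/2)$ itself, which you essentially acknowledge parenthetically.
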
  
	
	The rough (and very  imprecise!) idea consists in proving a uniform Kodaira embedding using the pick sections method "near the singularities" ("on metric tangent cones"). The main technical result used in their proof is  known as \emph{partial $\mathcal{C}^0$-estimates}. Let $b_k^{KE}(p)=\sum_i |s_i(p)|_{h^k_{KE}}^2$ be the Bergman Kernel using holomorphic sections $L^2$-orthonormal with respect to KE metrics. Then
	\begin{proposition}\label{partial} $\exists k_0=k_0(n, V), b=b(n,V)$ so that $b_{k_0}^{KE}\geq b^2 >0$ for all  KE $X^n$ of volume $V$.
	\end{proposition}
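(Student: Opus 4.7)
The plan is to argue by contradiction using the Gromov--Hausdorff compactness theory and tangent cone analysis developed in the previous sections. Suppose the statement fails: for every $k \in \mathbb N$ and every $\delta > 0$, there exist a KE Fano $(X^n, \omega^{KE})$ of volume $V$ and a point $p$ with $b_k^{KE}(p) < \delta$. A diagonal argument then produces a sequence $(X_i, \omega_i^{KE}, p_i)$ of such manifolds with $k_i \to \infty$ and $b_{k_i}^{KE}(p_i) \to 0$. By the Riemannian Gromov pre-compactness theorem (applicable since the KE condition gives $\mathrm{Ric} = \omega_i^{KE} > 0$ and Myers bounds the diameter), after extracting a subsequence $(X_i, p_i)$ converges in the pointed Gromov--Hausdorff sense to a limit metric space $(X_\infty, p_\infty)$ which, by Cheeger--Colding--Tian regularity, has a smooth regular set carrying a KE metric and singular set of real Hausdorff codimension at least $4$.

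Next I would pass to the metric tangent cone $C_{p_\infty}(X_\infty) = \mathbb C^k \times C(Y)$ at $p_\infty$, where $C(Y)$ is a Calabi--Yau cone with Sasaki--Einstein link. The key geometric input is that on such a cone the squared distance function $\varphi_0 := r^2/4$ from the apex is a Kähler potential for the cone metric. Thought of as a singular Hermitian weight on the trivial line bundle, $e^{-k\varphi_0}$ makes the constant section $\mathbf{1}$ into an $L^2$-normalizable holomorphic section of $K^{-k}$ with nonzero value at the apex, of size comparable to $1$ after normalization. If the tangent cone is itself singular further, one iterates the construction along the stratification until reaching a "deepest" cone factor where the peaked model section is produced explicitly.

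The technical heart is to transplant this model section back to the smooth manifolds $X_i$. Fix a small ball around $p_i$ that is Gromov--Hausdorff close to a ball of the same radius in the tangent cone; by GH approximation together with a smooth cut-off one transfers the model section to a global \emph{smooth} section $\tilde s_i$ of $K_{X_i}^{-k_i}$ of controlled $L^2$ norm. The error $\overline\partial \tilde s_i$ is supported away from $p_i$ in the cut-off annulus and is quantitatively small. Applying Hörmander's $L^2$ estimate for $\overline\partial$ on $X_i$, using the positivity of $K_{X_i}^{-1}$ whose Chern curvature is exactly $\omega_i^{KE}$, produces a correction $\eta_i$ with $\overline\partial \eta_i = \overline\partial \tilde s_i$ and $\|\eta_i\|_{L^2}$ small. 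The section $\sigma_i := \tilde s_i - \eta_i$ is then holomorphic, with uniformly bounded $L^2$ norm, and the mean value inequality (available since the KE metric controls curvature uniformly on definite-size balls in the regular regime) converts this into the pointwise lower bound $|\sigma_i(p_i)|_{h^{k_i}_{KE}} \geq c(n,V) > 0$, contradicting $b_{k_i}^{KE}(p_i) \to 0$.

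The hard part will be the Hörmander step. One needs a plurisubharmonic weight on $X_i$ that approximates $k_i \varphi_0$ near $p_i$ and is strong enough at $p_i$ to force the $L^2$ correction $\eta_i$ to vanish (or to be uniformly small) at the point itself, so that the pointwise bound on $\sigma_i$ at $p_i$ is preserved; additionally one must check that the weight remains sufficiently globally positive so that the $L^2$ inverse for $\overline\partial$ is controlled. Producing such a weight, together with its iterated versions when one must descend through a stratification of tangent cones of tangent cones, is the core of \cite{donaldson-sun-1}, and crucially exploits the codimension-$4$ structure of $\mathcal S$ so that the capacities entering the $L^2$-$L^\infty$ transitions are bounded independently of $i$.
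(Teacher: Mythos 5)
Your toolkit is the right one (model peak section on a metric tangent cone, cut-off, H\"ormander-type $L^2$ estimate whose positivity comes from the Bochner term with $\mathrm{Ric}=\omega$, Moser/mean-value to pass from $L^2$ to pointwise), but the contradiction scheme you wrap it in does not close, because of a mismatch of exponents. Your diagonalization produces $(X_i,p_i)$ with $k_i\to\infty$ and $b^{KE}_{k_i}(p_i)\to 0$, so to reach a contradiction you must bound $b_{k_i}(p_i)$ from below at those prescribed, unbounded exponents. The transplant construction does not give this: it produces a section of $K_{X_i}^{-k_0}$ for one \emph{bounded} $k_0$, chosen so that the ball around the relevant point, rescaled by $\sqrt{k_0}$, is GH-close to a ball in a tangent cone of the limit; such closeness holds only along some sequence of scales, fixed first, with $i$ then taken large relative to that scale. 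If $k_i$ is forced on you and tends to infinity, there is no uniform control that the $1/\sqrt{k_i}$-ball in $X_i$ is close to a cone, so the transplanted section has no small-$\bar\partial$ control and the H\"ormander step collapses. Hence what the method actually proves --- $b_{k_0}\geq b>0$ for a fixed $k_0$ and all large $i$ along a GH-convergent subsequence --- does not contradict $b_{k_i}(p_i)\to 0$.

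What is missing is the passage from this sequential statement to the uniform one claimed in the proposition. The route of the paper and of \cite{donaldson-sun-1} is: along any GH-convergent sequence one gets a single $k_0$ by covering the compact limit by finitely many regions, each with its own exponent, and passing to a common multiple, using that a lower bound on $b_k$ at a point yields one on $b_{mk}$ there (take $s\mapsto s^m$ together with the sup-norm bound for holomorphic sections); uniformity over all KE Fano $X^n$ of volume $V$ then follows from Gromov precompactness of the class by the same covering/common-multiple argument in the GH topology, not by your diagonal argument at unbounded levels. A smaller remark on the $\bar\partial$ step: no singular weight forcing the correction to vanish at $p_i$ is needed. Since the cut-off is identically $1$ near $p_i$, the error $\bar\partial\tilde s_i$ vanishes there, so the correction $\eta_i$ is holomorphic near $p_i$ and its $L^2$ smallness upgrades to $C^0$ (indeed $C^1$) smallness by the mean value inequality/Moser iteration; this is exactly how the text concludes $|\sigma(\phi_i^{-1}(p))|\geq C(k_0)>0$ with $k_0$ bounded independently of $i$.
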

	
	Then the identification of the GH limit with a complex variety follows by comparing the GH limit $X_\infty$ with the "flat limit" obtaining as limit of the image $T_i^k(X_i)$ via Tian's $L^2$ orthonormal embedding $T_i^k$, for a fixed bounded $k$.
	
	To bound the Bergman's kernel, we need to construct a pick section at any  $p$ ({\itshape i.e.} $s \in H^0(K_{X_i}^{-k_0})$ non vanishing at $p$), also, and crucially, near the singularity formations, with some uniform control. The idea would be to trivialize the canonical on $U_i$ near $p$ with a local section $\sigma_0=1$. By cutting-off $\sigma:=\chi \sigma_0$ we have a global smooth section picked at $p$ and holomorphic near $p$ and on the complement of $U_i$ (where is identically zero). Next we want to solve the $\bar \partial$-equation $\bar \partial \tau= \bar \partial \sigma$ with estimates (of H\"ormander's $L^2$-type), so that $s:=\sigma-\tau$ will be fully holomorphic and picked. We need to find a nice $U_i$.
	The crucial idea is that, instead  of working on $X_i$, we can work near the tip of a \emph{metric tangent cone}, and then transplant the region on $X_i$ via the diffeomorphism $\phi_i$ provided by Cheeger-Colding theory. Working on such  a region it is possible to construct a $\sigma_0$ on the tangent cone (as outlined above), whose $\bar \partial$ is small in $L^2$ (via a good cut of function). Define $\tau:= \bar \partial^\ast \Delta_{\bar\partial^*}^{-1} \bar \partial \phi_i^\ast \sigma_0$. Here $\Delta_{\bar\partial^*}$ is the Laplacian on $(1,0)$ forms valued the $k$-plurianticanonical bundle. It is uniformly invertible for $k$ big enough, since by Bochner's formula $\Delta_{\bar\partial^*}=\nabla^\ast \nabla+\frac{1}{k}Ric(\omega)+1$, we have  $(\Delta_{\bar\partial^*}(s),s)_{L^2_{k\omega}} \geq \frac{1}{2} |s|^2_{L^2_{k\omega}}$. Let $\sigma:=\phi_i^\ast \sigma_0-\tau$. Then it is holomorphic. Moreover $|\tau|_{2}^2\leq 2|\bar \partial \phi_i^\ast \sigma_0|^2_2$ which is small. By Moser's iteration its $\mathcal{C}^1$-norm is also small. Thus $|\sigma(\phi_i^{-1}(p))|\geq C(k_0)>0$, for a $k_0$ bounded by the geometry independent of $i$. Since also $\nabla \sigma$ is bounded uniformly, $T_i$ are Lipschitz and  converge to some continuous map $T_\infty$ which identifies the GH limit $X_{\infty}$ with the flat limit. 
	
	\begin{remark}
		In reality,  to get Theorem \ref{AGL} from Proposition \ref{partial}  is necessary to raise further (in a bounded way) the $k_0$ constant (in order to really separate points in the limit). Of course there are many other difficulties we haven't discussed in the oversimplified sketch given above.
	\end{remark} 
	\begin{remark}
		\begin{itemize}
			\item For the conical case, the limit is a Fano variety $W$ equipped with a (singular) limit  divisors $D_\infty$ where, generically, the singular limit KE metric is going to have some cone angle  singularities in the transverse directions to smooth point of $D_\infty$.
			
			\item It can be proved that the singularities of the limit are Kawamata log terminal (klt), that is $K_{\hat{X}}=\phi^* K_{X}+\sum a_i E_i$, with $a_i > -1$ for any divisorial resolutions of the singularities (since this is  essentially equivalent to say that near the singularity there is a (root) of a holomorphic volume   form $\Omega$ such that $\int_U \Omega\wedge \bar\Omega< +\infty$). In dimension two these singularities coincide with quotient singularities, thus recovering the orbifold theory. However, in higher dimension this is not the case, {\itshape e.g.} the ODP singularity $\sum_{i=0}^nx^2=0$ is not of quotient type as long as $n\geq3$. Moreover, the understanding of the metric near such singularities is much more subtle, but there are very  important  recent results in such direction \cite{donaldson-sun-2}.
			.
			
			\item It can be shown that the singular limit KE metrics are \emph{weak KE} in the pluripotential theory sense \cite{egz-JAMS}.
		\end{itemize}
	\end{remark}

	\subsection{Summing-up: the idea of the proof}
	Let us now go back to the Donaldson continuity path and say that $X_\infty$ is the GH limit of a sequence of increasing conical KE $(M_,\omega_{\beta_i})$, with $\beta_i\nearrow \underline{\beta}$ solving \eqref{DP}. Then, by the above regularity theories, we can assume that $X_\infty\cong (W, D_\infty, \omega_{\underline{\beta}})$ for a weak (log) KE Fano pair. We are now again in the domain of algebraic geometry! The main trick is now to use such $ (W, D_\infty, \omega_{\underline{\beta}})$ to construct a test configuration so that we can use the K-stability hypothesis. 
	
	The first step consists in extending the Donaldson-Futaki invariant to the pair setting: $(\mathcal{X},\mathcal{D})\rightarrow \C$, with $DF((X,(1-\beta)D),(\mathcal{X}, (1-\beta)D) ):=DF(X, \mathcal{X})+(1-\beta) \tilde{F}(D)$ in a natural way:  if $(X,D,\omega_\beta)$ is a singular space with a weak conical metric, then for a test configuration induced by holomorphic vector fields tangential to $D$, we have $DF(\mathcal{X}, (1-\beta)D)=F_D(v)$, for the natural extension of the Futaki type invariant \cite{li-sun}, as in Proposition \ref{DF=F}. In particular, if $\omega_\beta$ is conical KE, $DF(\mathcal{X}, (1-\beta)D)=0$ for such test configurations. Note the linearity in $\beta$ of the conical ($\log$) version of the Futaki invariant.
	
	If $(W, D_\infty)\cong(M,D)$ we would have done by uniqueness of the KE metric. If not, then by Donaldson-Sun we know that $(W, D_\infty)$ is realized as a flat limit of $(M,D)$ with some "universal" projective space $\C\mathbb{P}^{n_{k}}$, for fixed uniform $k$. In particular $(W, D_\infty) \in \overline{\mathrm{PGL}({n_{k}}).(M,D)}$. Since  $ (W, D_\infty, \omega_{\underline{\beta}})$ is KE, by an extension of Matsushima's reductivity, we have that $\mathrm{Aut}(W,D_\infty)$ is reductive. Hence, by the standard Luna's slice theorem for reductive groups, we can assume that there exists a $\C^* \hookrightarrow \mathrm{Aut}(W,D_\infty) \subseteq  \mathrm{PGL}({n_{k}})$ so that $(W,D)=\lim_{t\rightarrow 0} t.(M,D)$, {\itshape i.e.} $(W,D)$ is the central fibre for a test configuration! In particular, for such test configuration the log Futaki invariant is zero for parameter $\underline{\beta}$ since the central fibre has a KE metric. By linearity of the Futaki invariant, this  implies that the "absolute" Futaki invariant for $X$ has to be negative. But this contradicts our K-stability hypothesis. Thus only case one is indeed possible, and we can repeat the argument until $\underline{\beta}=1$, and finally construct a smooth KE metric on $X$! Be aware that the fact that cone angle metrics open up to usual \emph{smooth} KE  metrics  is far from being a triviality, even if it is very intuitive.
	
	This concludes the sketch of the arguments for proving Theorem \ref{CDS}.
	
	\begin{remark}
		\begin{itemize}
			\item There are now different proofs of the  equivalence between KE metric and K-stability: via the original Aubin's path \cite{datar-gabor}, via Ricci flow \cite{chen-sun-wang}, via Calculus of Variations \cite{berman-boucksom-jonsson, demailly-Bourbaki}. 
			\item Checking K-stability to construct new KE metric is still very hard (if not impossible at the present state-of-the art!) due to the too many test configurations which {\itshape a priori} needed to be checked. Understanding better this issue is definitely a crucial aspect of future investigations in the field.
		\end{itemize} 
	\end{remark}

	\subsection{KE moduli spaces and explicit examples}
	
	Let $$\overline{\mathcal{E M}}^{GH}:= \overline{\{(X^n,\omega) \, | \, \omega \mbox{ is KE Fano} \}/{\mbox{biholo-isom}}}^{GH}$$ be the \emph{compactification of the moduli space of KE Fano manifolds} obtained by adding the singular weak KE spaces coming from GH-degenerations. In some sense, these spaces can be thought as higher dimensional generalizations of the Deligne-Mumford moduli compactification in the curve case, but now for the \emph{positive} KE case. Thus, it should be not too surprisingly that it can be shown that such GH compactifications admit a natural structure of complex variety. Sometime they are  also known as  K-moduli space $\mathcal{K M}$, for obvious reasons. Can we find explicit examples of such $\overline{\mathcal{E M}}^{GH}$?
	This is important since testing K-stability, as we said, is still very hard: if we can explicitly described such moduli spaces in concrete situations, then we can understand exactly which Fano varieties in \emph{given families} are indeed KE/K-polystable.
	
	The idea to study such problem is via a "moduli continuity method": that is, given a family $\mathcal{X} \rightarrow{H}$ of Fano varieties, we would like to study the set of parameters for which the fibre variety $X_t$ admits a KE metrics. If we can prove that there exists at least one fibre which is KE, that the KE condition is essentially open (in ideal situations) and that abstract GH limits are actually naturally embedded in our starting family, it is possible to conclude (via "stability comparison") that $\overline{\mathcal{E M}}^{GH}_d=\mathcal{H}_d/\!\!/G$, where $d$ is some numerical parameter (as the volume/degree) and $G$ is a group acting on the parameter space giving a classical GIT quotient. In particular, K-stability would be equivalent to  GIT for such family, and GIT stability can be in principle checked by hand. This strategy has been used for fully understanding the two dimensional case in \cite{odaka-spotti-sun}, extending results of \cite{tian-Invent90, mabuchi-mukai}. 
	
	For example one can study \emph{cubic surfaces} in $\C\mathbb{P}^3$. Their defining coefficient form a parameter space $\mathcal{H}_3=\C \mathbb{P}^{19}$ on which the group $\mathrm{SL}(4,\C)$ acts naturally by reparameterization. The Fermat cubic ${x^3+y^3+z^3+t^3=0}$ is know to have alpha invariant bigger than $2/3$ and hence is KE. By implicit function theorem one can see that, among smooth cubic, the KE condition is open since they have only discrete automorphisms (actually, by \cite{tian-Invent90} we also know that smooth ones are all KE).  In any case let $X_\infty \cong W$ an abstract GH limit, which has only isolated orbifold singularities by the general regularity theory of degenerate limit recalled above. Then, by Bishop-Gromov inequality, we can compare the metric density at singular points with the total volume. This gives that $3=c_1^2(W)< 12 \sharp (\Gamma_p)^{-1}$. In particular, $\Gamma_p\cong  \Z_2, \Z_3$. It can be shown that in such case only the $SU(2)$ representations survives (related to the notion of $\mathbb{Q}$-Gorenstein smoothability, which is a consequence of Donaldson-Sun theory). This means that $W$ has only  $A_k$ singularities of the form $x^2+y^2=z^{k+1}$ with $k\leq2$. It is a known result that two dimensional Fano varieties of degree three  with such singularities have still very ample anti-canonical bundle, which thus provides embeddings $W \subseteq \C \mathbb{P}^{3}$. By Berman results we know that $W$ is K-polystable (since singular KE). It is then easy to see in such case that K-polystability implies GIT-polystabilty for the essentially unique linearization of the $SL(4,\C)$ action. Thus, with some further work,  $\overline{\mathcal{E M}}^{GH}_3\cong \mathcal{H}_3//SL(4,\C)$. The study of such GIT problem goes back to Hilbert. In particular, it is known that cubics with at worse $A_1$-singularities are stable, and only (the orbit of) the toric $xyz=t^3$  (with $3A_2$ singularities), is strictly polystable. Thus this classifies also all the cubics which are KE.
	
\begin{remark}
Very recently, new techniques ({\itshape e.g.} \cite{li-arxiv1511, liu-arxiv1605}) based on better understanding of the metric tangent cones in higher dimension and their relations with algebraic geometry,  made possible to study in concrete situations such moduli spaces in higher dimensions too ({\itshape e.g.} \cite{spotti-sun-arxiv1705, liu-xu-arxiv1706}).
\end{remark}

\end{document}